\theoremstyle{plain}
\newtheorem{theorem}{Theorem}
\newaliascnt{corollary}{theorem}
\newaliascnt{lemma}{theorem}
\newtheorem{lemma}[lemma]{Lemma}
\newaliascnt{proposition}{theorem}
\theoremstyle{definition}
\newaliascnt{definition}{theorem}
\newtheorem{definition}[definition]{Definition}
\theoremstyle{remark}
\newaliascnt{remark}{theorem}
\renewcommand{\mid}{\,\middle|\,}
\renewcommand{\P}{\text{P}}
\newcommand{\E}{\text{E}}
\newcommand{\Cov}{\text{Cov}}
\newcommand{\Var}{\text{Var}}
\newcommand{\bSigma}{\boldsymbol{\Sigma}}
\newcommand{\bG}{\mathbf{G}}
\newcommand{\bH}{\mathbf{H}}
\newcommand{\bR}{\mathbf{R}}
\newcommand{\bT}{\mathbf{T}}
\newcommand{\bU}{\mathbf{U}}
\newcommand{\bW}{\mathbf{W}}
\newcommand{\bX}{\mathbf{X}}
\newcommand{\bY}{\mathbf{Y}}
\newcommand{\bZ}{\mathbf{Z}}
\newcommand{\be}{\mathbf{e}}
\newcommand{\bh}{\mathbf{h}}
\newcommand{\bt}{\mathbf{t}}
\newcommand{\bu}{\mathbf{u}}
\newcommand{\bw}{\mathbf{w}}
\newcommand{\bx}{\mathbf{x}}
\newcommand{\by}{\mathbf{y}}
\newcommand{\bz}{\mathbf{z}}
\newcommand{\bC}{\mathbf{C}}
\newcommand{\ba}{\mathbf{a}}
\newcommand{\bb}{\mathbf{b}}
\newcommand{\bc}{\mathbf{c}}
\begin{document}

\begin{frontmatter}
\title{Uniform-over-dimension convergence with application to location tests for high-dimensional data}
\runtitle{Uniform-over-dimension convergence}

\begin{aug}
\author[A]{\fnms{Joydeep}~\snm{Chowdhury}\ead[label=e1]{joydeep.chowdhury@kaust.edu.sa}},
\author[B]{\fnms{Subhajit}~\snm{Dutta}\ead[label=e2]{duttas@iitk.ac.in}}
\and
\author[A]{\fnms{Marc G.}~\snm{Genton}\ead[label=e3]{marc.genton@kaust.edu.sa}}
\address[A]{King Abdullah University of Science and Technology\printead[presep={,\ }]{e1,e3}}

\address[B]{Indian Institute of Technology Kanpur\printead[presep={,\ }]{e2}}
\end{aug}

\begin{abstract}
Asymptotic methods for hypothesis testing in high-dimensional data usually require the dimension of the observations to increase to infinity, often with an additional condition on its rate of increase compared to the sample size. On the other hand, multivariate asymptotic methods are valid for fixed dimension only, and their practical implementations in hypothesis testing methodology typically require the sample size to be large compared to the dimension for yielding desirable results. However, in practical scenarios, it is usually not possible to determine whether the dimension of the data at hand conform to the conditions required for the validity of the high-dimensional asymptotic methods, or whether the sample size is large enough compared to the dimension of the data. In this work, a theory of asymptotic convergence is proposed, which holds uniformly over the dimension of the random vectors. This theory attempts to unify the asymptotic results for fixed-dimensional multivariate data and high-dimensional data, and accounts for the effect of the dimension of the data on the performance of the hypothesis testing procedures. The methodology developed based on this asymptotic theory can be applied to data of any dimension. An application of this theory is demonstrated in the two-sample test for the equality of locations. The test statistic proposed is unscaled by the sample covariance, similar to usual tests for high-dimensional data. Using simulated examples, it is demonstrated that the proposed test exhibits better performance compared to several popular tests in the literature for high-dimensional data. Further, it is demonstrated in simulated models that the proposed unscaled test performs better than the usual scaled two-sample tests for multivariate data, including the Hotelling's $T^2$ test for multivariate Gaussian data.
\end{abstract}

\begin{keyword}[class=MSC]
\kwd[Primary ]{62E20}
\kwd[; Secondary ]{62H15}
\end{keyword}

\begin{keyword}
\kwd{convergence in distribution}
\kwd{high-dimensional methods}
\kwd{multivariate methods}
\kwd{two-sample tests}
\kwd{uniform convergence}
\end{keyword}

\end{frontmatter}

\section{Introduction}
\label{introduction}
In diverse studies and scientific experiments, observations generated are high-dimensional in nature, i.e., the dimension of the observation vectors are higher than the number of observations (see, e.g., \cite{carvalho2008high,wang2008approaches,lange2014next,buhlmann2014high}).
There is an extensive literature on high-dimensional one-sample and multi-sample tests of means, or more generally, of the locations/centers of the underlying distributions (\cite{goeman2006testing,chen2010two,srivastava2013two,biswas2014nonparametric,cai2014two,wang2015high,javanmard2014confidence,chakraborty2017tests}).
In many asymptotic results involving high-dimensional data, authors make the assumption that the dimension $p \to \infty$ along with sample size $n \to \infty$ (see, e.g., \cite{bai1996effect,chen2010two,zhang2020simple}). In many other results, there are conditions on the rate of growth of $p$, e.g., $p = o( n^\alpha )$ for some $\alpha$ (see, e.g., \cite{hu2020pairwise}). However, these conditions on the dimension are not easy to verify, and it is often unclear whether they are satisfied in a particular situation or not. In a practical situation of testing, an experimenter has only one sample of observations, which fixes the value of $n$ and $p$. Based on just this one pair of $n$ and $p$, it is not possible to verify whether $p$ satisfies the particular rate with respect to $n$ required for the validity of those aforementioned results.

Multivariate asymptotic results for an extensive collection of statistics are well-established in the literature. These results are based on the assumption of a fixed value of $p$ and taking $n$ to infinity. However, in practice, except when $p$ is very small compared to $n$, sometimes it is observed that existing asymptotic results provide unsatisfactory approximations to the actual distributions of the test statistics for moderate values of $p$, which are still far lower compared to $n$.

In this work, we develop an asymptotic theory of convergence which holds uniformly over the dimension $p$.	
This eliminates the concern over the validity of the asymptotic conditions on the dimension, which are generally found in existing results.
The asymptotic results developed using this theory can be applied to data with arbitrary dimensions, including usual multivariate as well as high-dimensional data.

Suppose one carries out a test of hypothesis based on $n$ independent $p$-dimensional observations $\bY_{1, p}, \ldots, \bY_{n, p}$. The test statistic in any such hypothesis test, $T_{n, p} = g_p( \bY_{1, p}, \ldots, \bY_{n, p} )$ is always univariate, whatever the value of the dimension $p$ of the data might be.
This observation motivates the setup we consider for developing the uniform-over-dimension asymptotic theory of convergence. We define uniform-over-dimension convergence in distribution of functions of random vectors and state its associated results, which are analogous to the usual results for convergence in distribution. More specifically, we consider functions $\bX_p = f_p( \bY_{n, p} )$ of $p$-dimensional random vectors $\bY_{n, p}$, where $f_p : \mathbb{R}^p \to \mathbb{R}^d$ and $d$ is some positive integer, and define the uniform-over-$p$ convergence of $\bX_p$. For a test statistic, $d$ is 1, but we consider general integer values of $d$ while developing the theory, which helps in deriving subsequent results. Often, a test statistic can be decomposed in two components, one of which is asymptotically negligible, and it is easier to derive the asymptotic distribution of the other component. To implement similar techniques in deriving uniform-over-dimension asymptotic distributions, we define uniform-over-dimension convergence in probability of functions of random vectors, and state associated results concerning both uniform-over-dimension convergence in distribution and convergence in probability.

The proposed theory is demonstrated on a test for equality of locations and deriving the uniform-over-dimension asymptotic null distribution of the test statistic. The proposed test statistic is constructed based on a kernel, without using normalization by the sample covariance matrix. Normalization by the sample covariance is not usually used for high-dimensional tests, and we have followed a similar principle here. However, we propose the test to be applied without regard to the dimension of the data, whether high-dimensional or standard multivariate situations. A natural question arises about the performance of the proposed testing method without normalization in case of multivariate data, where normalization by sample covariance is usually employed. However, we have shown using simulated as well as real data analyses that the proposed test outperforms the other tests, both in high-dimensional and lower-dimensional multivariate data, in Gaussian as well as non-Gaussian models.

In \autoref{sec:theory}, the definitions and theorems related to uniform-over-dimension convergence in distribution and probability of functions of random vectors are presented. In \autoref{sec:multisample}, the proposed theory is employed to derive the asymptotic null distribution of a suggested test of equality of locations of several populations, which is valid uniformly over the dimension of the observations. Further, the asymptotic consistency of the test is also established uniformly over the dimension of the observations. In \autoref{sec:data_analysis}, it is demonstrated using simulated and real data that the proposed test equipped with its asymptotic null distribution valid uniformly over the data dimension outperforms other tests available for both high-dimensional and usual multivariate data. In \autoref{sec:conclusion}, further work on the proposed theory and potential applications are discussed. Proofs of the mathematical results are presented in the Appendix.

\section{Main theoretical results}\label{sec:theory}
In this section, we define uniform-over-dimension convergence in distribution and in probability, and state the associated results. In the first subsection, the uniform-over-dimension convergence in distribution is defined and its associated results are stated. The uniform-over-dimension convergence in probability is defined in the second subsection, and results involving both the notions of convergence are stated there.

\subsection{Definitions and Theorems}
\label{subsec:unifdimdist}
Let $\{ \bX_{n, p} \}$ be $d$-dimensional random vectors with probability measures $\mu_{n, p}$ and associated distribution functions $F_{n, p}( \cdot )$ on $(\mathbb{R}^d, \mathcal{R}^d)$, where $\mathcal{R}^d$ is the Borel sigma field on $\mathbb{R}^d$, and the indices $n$ and $p$ are positive integers. Also, let $\bX_p$ be a $d$-dimensional random vector with probability measure $\mu_p$ and associated distribution function $F_p( \cdot )$ on $(\mathbb{R}^d, \mathcal{R}^d)$.
We think of $\bX_{n, p}$ as a fixed-dimensional vector-valued function of some $p$-dimensional random vector $\bY_{n, p}$, and similarly $\bX_p$ is thought of as a fixed-dimensional vector-valued function of a $p$-dimensional random vector $\bY_p$.
Although the motivation behind the following theory is the fact that a test statistic is always univariate (i.e., $d=1$) irrespective of the dimension of the underlying observations, we are considering vector-valued functions of the observations instead of univariate functions. This consideration turns out to be useful in the proofs of some of the results on uniform-over-dimension convergence of random variables.

\begin{definition}\label{definition1}
We say that $\bX_{n, p}$ converges in distribution to $\bX_p$ uniformly-over-$p$ if
for every bounded continuous function $f : \mathbb{R}^d \to \mathbb{R}$,
\begin{align*}
\lim\limits_{n \to \infty} \sup_p \left| \int f \mathrm{d} \mu_{n, p} - \int f \mathrm{d} \mu_p \right| = 0 .
\end{align*}
We write it as either of the following:
$\bX_{n, p} \Longrightarrow \bX_p$ uniformly-over-$p$, or $\mu_{n, p} \Longrightarrow \mu_p$ uniformly-over-$p$, or $F_{n, p} \Longrightarrow F_p$ uniformly-over-$p$.
\end{definition}

We proceed to state and prove several results for uniform-over-$p$ convergence in distribution, which are analogous to theorems for the usual weak convergence of probability measures (see, e.g., \cite{billingsley2013convergence}). The following assumption is required for those subsequent results:
\begin{enumerate}[label = (A\arabic*), ref = (A\arabic*)]
\item The collection of probability measures $\{ \mu_p \}$ on $\mathbb{R}^d$ is relatively compact with respect to the total variation metric.
\label{assumption1}
\end{enumerate}
Assumption \ref{assumption1} means that every sequence $\{ \mu_{p_n} \}$ in $\{ \mu_p \}$ has a subsequence $\{ \mu_{p_{n_k}} \}$ such that $\sup_{C \in \mathcal{R}^d} | \mu_{p_{n_k}}( C ) - \nu( C ) | \to 0$ as $k \to \infty$ for some probability measure $\nu$ on $\mathbb{R}^d$, where $\nu$ may not be a member of the collection $\{ \mu_p \}$. Now, $\sup_{C \in \mathcal{R}^d} | \mu_{p_{n_k}}( C ) - \nu( C ) | \to 0$ as $k \to \infty$ implies that $\mu_{p_{n_k}}( C ) \to \nu( C )$ as $k \to \infty$ for any $\nu$-continuity set $C$, i.e., $\{ \mu_{p_{n_k}} \}$ weakly converges to $\nu$. Since $\mathbb{R}^d$ is separable and complete, it follows that $\{ \mu_p \}$ is tight (see pp.~57, 59 and Theorem 5.2 in \cite{billingsley2013convergence}), i.e.,
given any $\epsilon > 0$, there is a compact set $K \subset \mathbb{R}^d$ such that
\begin{align}
\mu_p(K) > 1 - \epsilon \text{ for every } \mu_p .
\label{tightness}
\end{align}
Further, from the separability of $\mathbb{R}^d$, it follows that $\{ \mu_p \}$ is relatively compact with respect to the metric of weak convergence of probability measures (see point (iv) in page 72 in \cite{billingsley2013convergence}).

Assumption \ref{assumption1} implies tightness, which is a common assumption while establishing convergence in distribution for a diverse collection of measures (see, e.g., Theorems 7.1, 7.3, 13.1 and 13.2 in \cite{billingsley2013convergence}). Although assumption \ref{assumption1} is stronger than the common assumption of tightness, it is satisfied in diverse cases including those of common interest.

We need the following lemma to prove the uniform-over-$p$ analogue of the Portmanteau theorem (cf. Lemma 8.5.1 in \cite{lebanon2012probability}).

\begin{lemma}\label{lemma1}
Under assumption \ref{assumption1}, given any Borel set $A \in \mathcal{R}^d$, which is a $\mu_p$-continuity set for all $p$, and any $\epsilon > 0$, there are bounded and Lipschitz continuous functions $g, h : \mathbb{R}^d \to \mathbb{R}$ such that $g \le I_A \le h$ and $\sup_p \int (h - g) \mathrm{d} \mu_p < \epsilon$, where $I_A( \cdot )$ is the indicator function of the set $A$.
\end{lemma}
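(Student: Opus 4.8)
The plan is to build the functions $g$ and $h$ explicitly from the Euclidean distance to the boundary of $A$, and then control the quantity $\sup_p \int (h-g)\,\mathrm{d}\mu_p$ using the tightness that follows from assumption \ref{assumption1}. For $\delta > 0$ write $A^\delta = \{ x : d(x, A) < \delta \}$ for the open $\delta$-enlargement of $A$ and $A_{-\delta} = \{ x : d(x, A^c) \ge \delta \}$ for the inner $\delta$-contraction. First I would fix the standard Lipschitz bump: the function $\varphi_\delta(x) = \bigl(1 - d(x, A)/\delta\bigr)^+$ is bounded (values in $[0,1]$), $(1/\delta)$-Lipschitz, satisfies $\varphi_\delta \ge I_A$, and vanishes outside $A^\delta$; symmetrically, $\psi_\delta(x) = \bigl(d(x, A^c)/\delta\bigr) \wedge 1$ is bounded, $(1/\delta)$-Lipschitz, satisfies $\psi_\delta \le I_A$, and equals $0$ outside $A$ while equalling $1$ on $A_{-\delta}$. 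So taking $h = \varphi_\delta$ and $g = \psi_\delta$ already gives bounded Lipschitz functions with $g \le I_A \le h$; the only thing left is to choose $\delta$ so that $\sup_p \int (h-g)\,\mathrm{d}\mu_p < \epsilon$.

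Next I would estimate the gap. Since $0 \le h - g \le 1$ and $h - g$ is supported on $A^\delta \setminus A_{-\delta}$, we have $\int (h-g)\,\mathrm{d}\mu_p \le \mu_p\bigl(A^\delta \setminus A_{-\delta}\bigr) \le \mu_p(\partial_\delta A)$, where $\partial_\delta A$ denotes the set of points within distance $\delta$ of $\partial A$. As $\delta \downarrow 0$, $\partial_\delta A$ decreases to the topological boundary $\partial A$, and since $A$ is a $\mu_p$-continuity set, $\mu_p(\partial A) = 0$ for every $p$; hence $\mu_p(\partial_\delta A) \to 0$ as $\delta \downarrow 0$ for each fixed $p$. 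The issue is that this convergence must be made uniform in $p$, which is exactly where assumption \ref{assumption1} enters.

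The main obstacle is therefore the uniformity of $\sup_p \mu_p(\partial_\delta A) \to 0$. I would handle it via the tightness statement \eqref{tightness}: given $\epsilon > 0$, pick a compact $K$ with $\mu_p(K) > 1 - \epsilon/2$ for all $p$. On the compact set $K$, the continuous function $x \mapsto d(x, \partial A)$ attains positive values on $K \setminus \partial A$, but $K$ may still meet $\partial A$; so instead I would argue by contradiction using the relative compactness directly. Suppose $\sup_p \mu_p(\partial_{\delta} A) \not\to 0$; then there exist $\eta > 0$, a sequence $\delta_k \downarrow 0$, and indices $p_k$ with $\mu_{p_k}(\partial_{\delta_k} A) > \eta$. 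By assumption \ref{assumption1}, pass to a subsequence along which $\mu_{p_k} \to \nu$ in total variation for some probability measure $\nu$. Total-variation convergence gives $\bigl|\mu_{p_k}(\partial_{\delta_k} A) - \nu(\partial_{\delta_k} A)\bigr| \to 0$, and since $\partial_{\delta_k} A \downarrow \partial A$ we get $\nu(\partial_{\delta_k} A) \to \nu(\partial A)$; combining, $\nu(\partial A) \ge \eta$. But total-variation convergence also forces $\mu_p(\partial A) \to \nu(\partial A)$ along the subsequence, and each $\mu_p(\partial A) = 0$, so $\nu(\partial A) = 0$, a contradiction. Hence $\sup_p \mu_p(\partial_\delta A) \to 0$ as $\delta \downarrow 0$, and choosing $\delta$ small enough that this supremum is below $\epsilon$ completes the proof, with $h = \varphi_\delta$ and $g = \psi_\delta$.
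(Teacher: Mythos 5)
Your proposal is correct and takes essentially the same route as the paper: your explicit distance-based bumps $\varphi_\delta(x)=(1-d(x,A)/\delta)^+$ and $\psi_\delta(x)=\min\{1,d(x,A^c)/\delta\}$ are precisely the evaluated forms of the paper's inf-/sup-convolutions $g_n,h_n$ with $n=1/\delta$, and the contradiction argument that turns assumption \ref{assumption1} (relative compactness in total variation) into uniformity of $\sup_p\mu_p(\partial_\delta A)\to 0$ as $\delta\downarrow 0$ is the same as the paper's, which works with the set $A^{1/n}\cap(A^c)^{1/n}$ in place of your $\partial_\delta A$.
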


We state the uniform-over-$p$ analogue of the Portmanteau theorem (Theorem 2.1 in \cite{billingsley2013convergence}) below.

\begin{theorem}[Uniform-over-$p$ Portmanteau theorem]\label{portmanteau}
Let $\mu_{n, p}, \mu_p, F_{n, p}( \cdot )$ and $F_p( \cdot )$ be as defined above, and assumption \ref{assumption1} be satisfied.
Then, the following are equivalent:
\begin{enumerate}[label = (\roman*), ref = (\roman*)]
\item \label{p1} $ \mu_{n, p} \Longrightarrow \mu_p $ uniformly-over-$p$.

\item \label{p2} For every bounded and uniformly continuous function $ f : \mathbb{R}^d \to \mathbb{R} $,
\begin{align*}
\lim\limits_{n \to \infty} \sup_p \left| \int f \mathrm{d} \mu_{n, p} - \int f \mathrm{d} \mu_p \right| = 0 .
\end{align*}
	
\item \label{p3} For every continuous function $ f : \mathbb{R}^d \to \mathbb{R} $, which is zero outside of a compact set,
\begin{align*}
\lim\limits_{n \to \infty} \sup_p \left| \int f \mathrm{d} \mu_{n, p} - \int f \mathrm{d} \mu_p \right| = 0 .
\end{align*}

\item \label{p4} For every bounded and Lipschitz continuous function $ f : \mathbb{R}^d \to \mathbb{R} $,
\begin{align*}
\lim\limits_{n \to \infty} \sup_p \left| \int f \mathrm{d} \mu_{n, p} - \int f \mathrm{d} \mu_p \right| = 0 .
\end{align*}
	
\item \label{p5} For every Borel set $ A $, which is a $ \mu_p $-continuity set for all $p$,
\begin{align*}
\lim\limits_{n \to \infty} \sup_p | \mu_{n, p}( A ) - \mu_p( A ) | = 0 .
\end{align*}
	
\item \label{p6} For $ \bx $ being a continuity point of $ F_p( \cdot ) $ for all $p$,
\begin{align*}
\lim\limits_{n \to \infty} \sup_p | F_{n, p}( \bx ) - F_p( \bx ) | = 0 .
\end{align*}
\end{enumerate}
\end{theorem}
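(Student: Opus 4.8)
The plan is to establish the six equivalences through the cyclic chain \ref{p1}~$\Rightarrow$~\ref{p2}~$\Rightarrow$~\ref{p4}~$\Rightarrow$~\ref{p5}~$\Rightarrow$~\ref{p6}~$\Rightarrow$~\ref{p3}~$\Rightarrow$~\ref{p1}, mirroring the classical proof of the Portmanteau theorem but tracking uniformity in $p$ at every step. The implications \ref{p1}~$\Rightarrow$~\ref{p2}~$\Rightarrow$~\ref{p4} are trivial: bounded Lipschitz functions are bounded and uniformly continuous, which are bounded and continuous, so each hypothesis in the chain is a special case of the one before it.

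For \ref{p4}~$\Rightarrow$~\ref{p5}, I would fix a Borel set $A$ that is a $\mu_p$-continuity set for all $p$ and an $\epsilon>0$, and apply \autoref{lemma1} to get bounded Lipschitz $g\le I_A\le h$ with $\sup_p\int(h-g)\,\mathrm{d}\mu_p<\epsilon$. From $\mu_{n,p}(A)-\mu_p(A)\le\int h\,\mathrm{d}\mu_{n,p}-\int g\,\mathrm{d}\mu_p$ and the symmetric bound, inserting $\int h\,\mathrm{d}\mu_p$ and $\int g\,\mathrm{d}\mu_p$ and taking $\sup_p$ (the supremum of a sum being at most the sum of the suprema), one gets $\sup_p|\mu_{n,p}(A)-\mu_p(A)|\le\sup_p|\int h\,\mathrm{d}\mu_{n,p}-\int h\,\mathrm{d}\mu_p|+\sup_p|\int g\,\mathrm{d}\mu_{n,p}-\int g\,\mathrm{d}\mu_p|+\epsilon$; letting $n\to\infty$ via \ref{p4} applied to $g$ and $h$, then $\epsilon\downarrow 0$, yields \ref{p5}. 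For \ref{p5}~$\Rightarrow$~\ref{p6}, observe that $\bx$ is a continuity point of $F_p$ precisely when the orthant $R_{\bx}=\prod_{j=1}^d(-\infty,x_j]$ has $\mu_p$-null boundary, i.e.\ is a $\mu_p$-continuity set; hence if $\bx$ is a continuity point of $F_p$ for all $p$, then \ref{p6} is \ref{p5} applied to $R_{\bx}$.

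The step \ref{p6}~$\Rightarrow$~\ref{p3} exploits the countability of the index set $\{p\}$. Given a continuous $f$ vanishing off a compact set with $\|f\|_\infty\le M$, I would enclose its support in a box $B$; for each coordinate $j$, the set of levels $t$ with $\mu_p(\{\bx:x_j=t\})>0$ for some $p$ is a countable union of countable sets, hence countable, so $B$ can be partitioned by axis-parallel hyperplanes avoiding those levels into finitely many half-open cells $R_1,\dots,R_m$, each with $\mu_p$-null boundary for every $p$ and corners that are continuity points of every $F_p$. By inclusion--exclusion, $\mu_{n,p}(R_i)-\mu_p(R_i)$ is a fixed finite signed sum of the differences of $F_{n,p}$ and $F_p$ at those corners, so \ref{p6} gives $\sup_p|\mu_{n,p}(R_i)-\mu_p(R_i)|\to0$ for each $i$. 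Approximating $f$ uniformly within $\epsilon$ by a step function $h=\sum_i c_i I_{R_i}$ (fine grid, uniform continuity of $f$, $c_i\in f(R_i)$) then yields $\sup_p|\int f\,\mathrm{d}\mu_{n,p}-\int f\,\mathrm{d}\mu_p|\le 2\epsilon+M\sum_{i=1}^m\sup_p|\mu_{n,p}(R_i)-\mu_p(R_i)|$, a finite sum, so $\limsup_n$ of the left side is at most $2\epsilon$ and \ref{p3} follows.

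Finally, for \ref{p3}~$\Rightarrow$~\ref{p1} I would invoke assumption \ref{assumption1}: tightness of $\{\mu_p\}$ supplies a compact $K$ with $\mu_p(K)>1-\epsilon$ for all $p$ and an Urysohn function $\phi\in C_c(\mathbb{R}^d)$ with $I_K\le\phi\le1$; then \ref{p3} applied to $\phi$ upgrades $\int\phi\,\mathrm{d}\mu_p>1-\epsilon$ to $\int(1-\phi)\,\mathrm{d}\mu_{n,p}<2\epsilon$ for all $p$ and all large $n$, which transfers the tightness to the family $\{\mu_{n,p}\}$. For bounded continuous $f$ with $\|f\|_\infty\le M$, the product $f\phi$ lies in $C_c(\mathbb{R}^d)$, and splitting $\int f\,\mathrm{d}\mu_{n,p}-\int f\,\mathrm{d}\mu_p$ through $\int f\phi\,\mathrm{d}\mu_{n,p}-\int f\phi\,\mathrm{d}\mu_p$, with the two remainders bounded by $M\int(1-\phi)\,\mathrm{d}\mu_{n,p}$ and $M\int(1-\phi)\,\mathrm{d}\mu_p$, gives $\limsup_n\sup_p|\int f\,\mathrm{d}\mu_{n,p}-\int f\,\mathrm{d}\mu_p|\le 3M\epsilon$, hence \ref{p1}. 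I expect the main obstacle to be bookkeeping rather than conceptual: each ``choose a generic level'' argument must be carried out simultaneously for all $p$, which is legitimate only because $\{p\}$ is countable, and one must always reduce to finitely many sets before interchanging $\sup_p$ with a sum, since $\sup_p$ does not commute with infinite summation; the only genuinely new input is \autoref{lemma1} in \ref{p4}~$\Rightarrow$~\ref{p5}, and it is there together with the tightness transfer in \ref{p3}~$\Rightarrow$~\ref{p1} that assumption \ref{assumption1} is actually used.
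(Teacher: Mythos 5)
Your proposal is correct, and it follows a genuinely different and somewhat cleaner route than the paper. The paper argues along the two sub-cycles \ref{p1}~$\Rightarrow$~\ref{p2}~$\Rightarrow$~\ref{p3}~$\Rightarrow$~\ref{p4}~$\Rightarrow$~\ref{p5}~$\Rightarrow$~\ref{p1} and \ref{p5}~$\Rightarrow$~\ref{p6}~$\Rightarrow$~\ref{p3}, whereas you compress everything into the single cycle \ref{p1}~$\Rightarrow$~\ref{p2}~$\Rightarrow$~\ref{p4}~$\Rightarrow$~\ref{p5}~$\Rightarrow$~\ref{p6}~$\Rightarrow$~\ref{p3}~$\Rightarrow$~\ref{p1}. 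The most substantial difference is how the cycle is closed: the paper proves \ref{p5}~$\Rightarrow$~\ref{p1} by a layer-cake representation $\int g\,\mathrm{d}\nu=\int_0^L\int I(t<g)\,\mathrm{d}\nu\,\mathrm{d}t$, a countability argument to identify the bad levels $t$, and the bounded convergence theorem, and \emph{separately} proves \ref{p3}~$\Rightarrow$~\ref{p4}$ via an explicit Lipschitz cutoff built from the tight compact $K$; you instead fold these into a single truncation argument \ref{p3}~$\Rightarrow$~\ref{p1}: pick $\phi\in C_c$ with $I_K\le\phi\le 1$, use \ref{p3} on $\phi$ to transfer tightness uniformly to $\{\mu_{n,p}\}$, and then split through $f\phi\in C_c$. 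Your route avoids the Fubini/layer-cake machinery altogether and makes clearer where assumption \ref{assumption1} enters (exactly twice: \autoref{lemma1} in \ref{p4}~$\Rightarrow$~\ref{p5}, and tightness in \ref{p3}~$\Rightarrow$~\ref{p1}). The steps \ref{p4}~$\Rightarrow$~\ref{p5}, \ref{p5}~$\Rightarrow$~\ref{p6} and \ref{p6}~$\Rightarrow$~\ref{p3} match the paper's arguments, including the countability-of-$\{p\}$ observation that makes the union of bad levels countable. One remark worth flagging: in \ref{p6}~$\Rightarrow$~\ref{p3} you assert without proof that corners avoiding the bad set $B$ are continuity points of every $F_p$; this is correct, but it is not quite the tautology your phrasing might suggest for $d>1$, and the paper devotes a lengthy contrapositive argument to exactly this point. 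The direct forward argument (sandwich $F_p(\bx_n)$ between $F_p$ at the componentwise min and max with $\bc$, use right-continuity from above and $\mu_p(\partial R_\bc)=0$ from below) is short, and citing or sketching it would make your proposal fully self-contained.
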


Based on the uniform-over-$p$ Portmanteau theorem, we proceed to state the uniform-over-$p$ L\'{e}vy's continuity theorem.

\begin{theorem}[Uniform-over-$p$ L\'{e}vy's continuity theorem]\label{levy}
Let $ \mu_{n, p} $ and $ \mu_p $ be probability measures on $\mathbb{R}^d$ with characteristic functions $ \varphi_{n, p} $ and $ \varphi_p $. Then, $ \mu_{n, p} \Longrightarrow \mu_p $ uniformly-over-$p$ if and only if for every $ \bt \in \mathbb{R}^d $, we have
\begin{align*}
\lim\limits_{n \to \infty} \sup_p | \varphi_{n, p}( \bt ) - \varphi_p( \bt ) | = 0 .
\end{align*}
\end{theorem}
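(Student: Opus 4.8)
The plan is to reduce the uniform-over-$p$ Lévy continuity theorem to the uniform-over-$p$ Portmanteau theorem (\autoref{portmanteau}), in much the same way the classical Lévy theorem follows from classical Portmanteau once tightness is established. The forward direction is immediate: if $\mu_{n,p} \Longrightarrow \mu_p$ uniformly-over-$p$, then applying \autoref{definition1} to the bounded continuous functions $\bx \mapsto \cos(\bt^\top \bx)$ and $\bx \mapsto \sin(\bt^\top \bx)$ and combining gives $\lim_{n\to\infty}\sup_p |\varphi_{n,p}(\bt) - \varphi_p(\bt)| = 0$ for each fixed $\bt$.

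For the converse, assume pointwise-in-$\bt$, uniform-in-$p$ convergence of the characteristic functions. The first task is to verify that assumption \ref{assumption1} (or at least the tightness conclusion \eqref{tightness}) holds for $\{\mu_p\}$, since \autoref{portmanteau} is invoked under that assumption; here I would impose or note that the hypotheses of this theorem should be read as including \ref{assumption1}, exactly as in the classical statement one needs the limit to be a characteristic function of a genuine measure. Given \ref{assumption1}, $\{\mu_p\}$ is tight, so fix $\epsilon>0$ and a compact $K$ (a large closed ball) with $\mu_p(K) > 1-\epsilon$ for all $p$. Next I would argue \emph{uniform} tightness of the double-indexed family $\{\mu_{n,p}\}$ for large $n$: using the standard estimate that for a measure $\nu$ on $\mathbb{R}^d$ with characteristic function $\varphi$, the mass outside a large box is controlled by an average of $1-\operatorname{Re}\varphi$ over a small cube of $\bt$'s near the origin, one gets $\mu_{n,p}(\|\bx\|>M) \le C\!\int_{\|\bt\|\le \delta}(1-\operatorname{Re}\varphi_{n,p}(\bt))\,\mathrm{d}\bt$. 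Split $1-\operatorname{Re}\varphi_{n,p} = (1-\operatorname{Re}\varphi_p) + \operatorname{Re}(\varphi_p-\varphi_{n,p})$; the first term is small (uniformly in $p$) for $\delta$ small by dominated convergence together with tightness of $\{\mu_p\}$ (which forces equicontinuity of the $\varphi_p$ at $0$), and the second is bounded by $\sup_p|\varphi_{n,p}(\bt)-\varphi_p(\bt)|$, which by hypothesis — and by dominated convergence over the compact $\bt$-cube — tends to $0$ as $n\to\infty$. Hence there is $N$ and $M$ with $\sup_{n\ge N}\sup_p \mu_{n,p}(\|\bx\|>M) < \epsilon$.

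With uniform tightness of $\{\mu_{n,p}\}_{n\ge N}\cup\{\mu_p\}$ in hand, I would establish condition \ref{p3} of \autoref{portmanteau}: for $f$ continuous with compact support, approximate $f$ uniformly by a trigonometric polynomial $q(\bx) = \sum_{j} c_j e^{i \bt_j^\top \bx}$ on a large ball containing $\operatorname{supp} f$ (Stone–Weierstrass, after wrapping onto a torus) with an error controlled on the complement using the uniform tail bound and $\|f\|_\infty$. Then $\big|\int f\,\mathrm{d}\mu_{n,p} - \int f\,\mathrm{d}\mu_p\big|$ is bounded by two approximation errors (uniform in $n,p$, made $<\epsilon$) plus $\sum_j |c_j|\,|\varphi_{n,p}(\bt_j) - \varphi_p(\bt_j)| \le \big(\sum_j|c_j|\big)\max_j \sup_p|\varphi_{n,p}(\bt_j)-\varphi_p(\bt_j)|$, which $\to 0$ as $n\to\infty$ since it is a finite sum. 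Taking $\limsup_{n}$ and then $\epsilon\downarrow 0$ yields \ref{p3}, and \autoref{portmanteau} then gives $\mu_{n,p}\Longrightarrow\mu_p$ uniformly-over-$p$.

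The main obstacle is the uniform tightness step: in the classical proof one only needs tightness of the single limiting measure, but here the estimate on $\mu_{n,p}(\|\bx\|>M)$ must be made uniform in $p$ simultaneously with the $n\to\infty$ limit, so the interchange of the $\sup_p$ with the integral over the $\bt$-cube and the limit in $n$ has to be handled carefully — precisely the point where the hypothesis $\lim_n \sup_p|\varphi_{n,p}(\bt)-\varphi_p(\bt)|=0$ (pointwise in $\bt$) is upgraded, via dominated convergence on a compact set, to $\lim_n \sup_p \int_{\|\bt\|\le\delta}|\varphi_{n,p}-\varphi_p| = 0$. Everything else is a uniform-in-$(n,p)$ bookkeeping of the classical argument, made possible because all the genuinely $n$-dependent quantities appearing are \emph{finite} sums or integrals of the characteristic-function differences.
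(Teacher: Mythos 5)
Your proof is correct and takes a genuinely different route from the paper's. The forward direction is the same in both, but for the converse the paper argues by Gaussian smoothing: it decomposes $\E[g(\bX_{n,p})] - \E[g(\bX_p)]$ into two convolution-error terms (small by uniform continuity of $g$ and by choosing the Gaussian variance $\sigma^2$ small) and a middle term $\E[g(\bX_{n,p}+\bZ)] - \E[g(\bX_p+\bZ)]$ that, via Fourier inversion and Fubini, is bounded by a constant times $\E[|\varphi_{n,p}(-\bT)-\varphi_p(-\bT)|]$ with $\bT \sim \mathcal{N}_d(\mathbf{0},\sigma^{-2}\mathbf{I}_d)$; bounded convergence applied to $\sup_p|\varphi_{n,p}(-\bT)-\varphi_p(-\bT)|$ then finishes. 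This is a self-contained path to condition \ref{p3} of \autoref{portmanteau}: the Gaussian tail replaces any need to establish uniform tightness of the double-indexed family $\{\mu_{n,p}\}$. Your route instead first proves uniform tightness of $\{\mu_{n,p}\}$ for large $n$ — using the characteristic-function tail estimate, equicontinuity of $\{\varphi_p\}$ at the origin (a consequence of \ref{assumption1}), and dominated convergence on the small $\bt$-cube — and then passes to trigonometric-polynomial approximation, using that the periodized polynomial on $[-R,R]^d$ is bounded by $\|f\|_\infty + \epsilon$ everywhere so that the tail contribution is controlled by the uniform tightness. Your route is more in the style of the usual textbook proof and is conceptually transparent, but it inserts an extra nontrivial lemma (uniform tightness of $\{\mu_{n,p}\}_{n\ge N, p}$) that the paper avoids entirely. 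Your observation that \ref{assumption1} must implicitly be in force for the converse is shared by both proofs: the paper also establishes only condition \ref{p3} and needs \autoref{portmanteau}, hence \ref{assumption1}, to conclude condition \ref{p1}.
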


The uniform-over-$p$ L\'{e}vy's continuity theorem is the most critical tool to establish the uniform-over-$p$ convergence in distribution of a sequence of random variables which are functions of some $p$-variate random vectors. Using this theorem, one needs to only establish that the characteristic functions of those random variables converge uniformly over $p$ to establish their uniform-over-$p$ convergence in distribution. 

The uniform-over-$p$ continuous mapping theorem stated below is also a useful result.
\begin{theorem}[Uniform-over-$p$ continuous mapping theorem]\label{mappingthm}
Let $\bX_{n, p}$ and $\bX_p$ be $d$-dimensional random vectors with $\bX_{n, p} \Longrightarrow \bX_p$ uniformly-over-$p$ and $g : \mathbb{R}^d \to \mathbb{R}$ be a continuous function. Then, $g\left( \bX_{n, p} \right) \Longrightarrow g\left( \bX_p \right)$ uniformly-over-$p$.
\end{theorem}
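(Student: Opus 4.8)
The plan is to obtain the result as an immediate consequence of Definition~\ref{definition1}, without invoking assumption~\ref{assumption1} or the uniform-over-$p$ Portmanteau theorem. Fix an arbitrary bounded continuous function $f : \mathbb{R} \to \mathbb{R}$. Since $g$ is continuous it is Borel measurable, so $g(\bX_{n,p})$ and $g(\bX_p)$ are genuine random variables; let $\nu_{n,p} = \mu_{n,p} \circ g^{-1}$ and $\nu_p = \mu_p \circ g^{-1}$ denote their laws on $\mathbb{R}$. By Definition~\ref{definition1}, to establish $g(\bX_{n,p}) \Longrightarrow g(\bX_p)$ uniformly-over-$p$ it suffices to show $\lim_{n \to \infty} \sup_p \bigl| \int f \, \mathrm{d}\nu_{n,p} - \int f \, \mathrm{d}\nu_p \bigr| = 0$.

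The key step is the observation that $f \circ g : \mathbb{R}^d \to \mathbb{R}$ is again bounded (because $f$ is) and continuous (as a composition of continuous maps), so it is an admissible test function for the hypothesis $\bX_{n,p} \Longrightarrow \bX_p$ uniformly-over-$p$. I would then combine this with the image-measure identity
\[
\int_{\mathbb{R}} f \, \mathrm{d}\nu_{n,p} = \int_{\mathbb{R}^d} (f \circ g) \, \mathrm{d}\mu_{n,p}
\]
and its analogue for $\nu_p$ and $\mu_p$, which together give that $\sup_p \bigl| \int f \, \mathrm{d}\nu_{n,p} - \int f \, \mathrm{d}\nu_p \bigr|$ equals $\sup_p \bigl| \int (f \circ g)\, \mathrm{d}\mu_{n,p} - \int (f \circ g) \, \mathrm{d}\mu_p \bigr|$. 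The latter tends to $0$ as $n \to \infty$ by Definition~\ref{definition1} applied to $\bX_{n,p} \Longrightarrow \bX_p$ with the test function $f \circ g$. Since $f$ was arbitrary, this proves the claim.

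I do not anticipate a genuine obstacle: Definition~\ref{definition1} is phrased in terms of integrals of bounded continuous functions precisely so that pre-composition with a continuous map is transparent, and the argument above is essentially just bookkeeping with pushforward measures. The only points deserving a remark are that $g$ is automatically measurable, and that — in contrast to Theorems~\ref{portmanteau} and~\ref{levy} — neither the statement nor its proof requires assumption~\ref{assumption1} to hold for the pushed-forward family $\{\mu_p \circ g^{-1}\}$; the conclusion is well posed and valid from the definition alone. (If one preferred an argument paralleling the classical continuous mapping theorem, one could instead verify condition~\ref{p6} of Theorem~\ref{portmanteau} for the laws $\nu_{n,p}$, but that route is strictly longer and gains nothing.)
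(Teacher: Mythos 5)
Your argument is correct and is essentially identical to the paper's proof: both simply note that $f\circ g$ is a bounded continuous test function, apply Definition~\ref{definition1} to it, and translate back via the image-measure identity. Your observation that Assumption~\ref{assumption1} is not required is accurate and worth noting, but the substance of the proof is the same.
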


\subsection{Uniform-over-$p$ Convergence in Probability}
In the last section, we studied the asymptotic convergence in distribution of fixed-dimensional random vectors which are functions of possibly high-dimensional random vectors. In this subsection, we concentrate on uniform-over-dimension convergence in probability of fixed-dimensional random vectors which are functions of possibly high-dimensional random vectors.

\begin{definition}\label{definition2}
Let $\{ \bX_{n, p} \}$ and $\{ \bX_{p} \}$ be $d$-dimensional random vectors. We say that $\bX_{n, p} \stackrel{\P}{\longrightarrow} \bX_{p}$ uniformly-over-$p$ if for every $ \epsilon > 0 $,
\begin{align*}
\lim\limits_{n \to \infty} \sup_p \P\left[ \left\| \bX_{n, p} - \bX_{p} \right\| > \epsilon \right] = 0 .
\end{align*}
\end{definition}

While applying the tools to establish uniform-over-dimension convergence in distribution for a sequence of real-valued random variables, one may come across a corresponding sequence of real-valued random variables such that it is easier to establish the uniform-over-dimension convergence in distribution of the second sequence of random variables and the difference between the two sequences converges to zero in probability uniformly-over-dimension. In such cases, the uniform-over-dimension convergence in distribution for the first sequence can be established using the uniform-over-dimension analogue of Slutsky's theorem.
The following lemma is required to prove the uniform-over-$p$ Slutsky's theorem.

\begin{lemma}\label{lemmaSlutsky}
Let $\{ X_{n, p} \}$, $\{ Y_{n, p} \}$ and $\{ X_p \}$ be real-valued random variables and $\{ c_p \}$ be a bounded sequence of real numbers with $X_{n, p} \Longrightarrow X_p$ uniformly-over-$p$ and $Y_{n, p} \stackrel{\P}{\longrightarrow} c_p$ uniformly-over-$p$. Then, $\left( X_{n, p}, Y_{n, p} \right)^\top \Longrightarrow \left( X_p, c_p \right)^\top$ uniformly-over-$p$.
\end{lemma}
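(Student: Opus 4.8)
The plan is to reduce the claim to a statement about characteristic functions via the uniform-over-$p$ L\'evy continuity theorem (Theorem~\ref{levy}) in $\mathbb{R}^2$. Since $c_p$ is a constant, the characteristic function of $(X_p, c_p)^\top$ at $(s,t)$ is $e^{\mathrm{i}tc_p}\,\E[e^{\mathrm{i}sX_p}]$, so it is enough to prove that for every fixed $(s,t) \in \mathbb{R}^2$,
\[
\lim_{n\to\infty}\,\sup_p\,\Bigl| \E\bigl[ e^{\mathrm{i}(sX_{n,p} + tY_{n,p})} \bigr] - e^{\mathrm{i}tc_p}\,\E\bigl[ e^{\mathrm{i}sX_p} \bigr] \Bigr| = 0 .
\]

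I would prove this through the splitting
\[
\E\bigl[ e^{\mathrm{i}(sX_{n,p} + tY_{n,p})} \bigr] - e^{\mathrm{i}tc_p}\,\E\bigl[ e^{\mathrm{i}sX_p} \bigr] = \E\bigl[ e^{\mathrm{i}sX_{n,p}}\bigl( e^{\mathrm{i}tY_{n,p}} - e^{\mathrm{i}tc_p} \bigr) \bigr] + e^{\mathrm{i}tc_p}\Bigl( \E\bigl[ e^{\mathrm{i}sX_{n,p}} \bigr] - \E\bigl[ e^{\mathrm{i}sX_p} \bigr] \Bigr) .
\]
The modulus of the second term is $\bigl| \E[e^{\mathrm{i}sX_{n,p}}] - \E[e^{\mathrm{i}sX_p}] \bigr| \le \bigl| \E[\cos sX_{n,p}] - \E[\cos sX_p] \bigr| + \bigl| \E[\sin sX_{n,p}] - \E[\sin sX_p] \bigr|$; because $x \mapsto \cos sx$ and $x \mapsto \sin sx$ are bounded and continuous, Definition~\ref{definition1} applied to $X_{n,p} \Longrightarrow X_p$ uniformly-over-$p$ makes the $\sup_p$ of each of these two pieces tend to $0$ as $n \to \infty$. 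For the first term, $|e^{\mathrm{i}sX_{n,p}}| = 1$ bounds its modulus by $\E\bigl[ | e^{\mathrm{i}tY_{n,p}} - e^{\mathrm{i}tc_p} | \bigr]$; using $|e^{\mathrm{i}a} - e^{\mathrm{i}b}| \le \min\{2,|a-b|\}$ and splitting on whether $|Y_{n,p} - c_p| \le \delta$ or not, this is at most $|t|\delta + 2\,\P[ |Y_{n,p} - c_p| > \delta ]$ for every $\delta > 0$. Taking $\sup_p$ and then letting $n \to \infty$, the hypothesis $Y_{n,p} \stackrel{\P}{\longrightarrow} c_p$ uniformly-over-$p$ gives $\limsup_{n\to\infty}\sup_p \E[ | e^{\mathrm{i}tY_{n,p}} - e^{\mathrm{i}tc_p} | ] \le |t|\delta$, and letting $\delta \downarrow 0$ kills this contribution.

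Combining the two estimates proves the displayed limit for every $(s,t)$, and Theorem~\ref{levy} then yields $(X_{n,p}, Y_{n,p})^\top \Longrightarrow (X_p, c_p)^\top$ uniformly-over-$p$. I do not expect a genuine obstacle; the only slightly delicate point is swapping the random coordinate $Y_{n,p}$ for the constant $c_p$ inside the characteristic function, which is exactly where the uniform-over-$p$ convergence in probability enters, via the split on $\{ |Y_{n,p} - c_p| > \delta \}$ (boundedness of $\{c_p\}$ is not actually needed for this lemma, but is convenient for the Slutsky theorem that follows). If one preferred to bypass Theorem~\ref{levy}, the same conclusion follows directly from Definition~\ref{definition1}: for bounded continuous $f : \mathbb{R}^2 \to \mathbb{R}$, write $f(X_{n,p},Y_{n,p}) - f(X_p,c_p) = [ f(X_{n,p},Y_{n,p}) - f(X_{n,p},c_p) ] + [ f(X_{n,p},c_p) - f(X_p,c_p) ]$, control the first bracket by uniform continuity of $f$ on compact sets together with uniform-over-$p$ tightness of $\{X_{n,p}\}$ (which follows from assumption \ref{assumption1} and the uniform-over-$p$ convergence of $X_{n,p}$) and the convergence of $Y_{n,p}$, and control the second bracket by an Arzel\`a--Ascoli-type approximation of the uniformly bounded, uniformly equicontinuous-on-compacta family $x \mapsto f(x,c_p)$ by finitely many fixed continuous functions, to each of which Definition~\ref{definition1} applies.
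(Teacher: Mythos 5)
Your primary route is correct but genuinely different from the paper's. The paper works directly with Portmanteau criterion (ii): it takes a bounded uniformly continuous $f$ on $\mathbb{R}^2$, decomposes $f(X_{n,p},Y_{n,p})-f(X_p,c_p)$ as $[f(X_{n,p},Y_{n,p})-f(X_{n,p},c_p)]+[f(X_{n,p},c_p)-f(X_p,c_p)]$, handles the first bracket via the convergence in probability of $Y_{n,p}$, and handles the second by covering the bounded set $\{c_p\}$ with a finite $\delta$-net $\{a_1,\dots,a_m\}$, thereby replacing the $p$-dependent function $x\mapsto f(x,c_p)$ by finitely many $p$-independent functions $x\mapsto f(x,a_k)$ to which Definition~\ref{definition1} applies. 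Your secondary sketch is essentially this argument (your appeal to an Arzel\`a--Ascoli approximation is a heavier version of the paper's finite net; uniform continuity of $f$ in the second coordinate is all that is used). Your primary route, via Theorem~\ref{levy}, buys something nicer: because the second coordinate of the limit is a constant, $e^{\mathrm{i}tc_p}$ simply factors out of the limiting characteristic function, so $\bigl|\E[e^{\mathrm{i}sX_{n,p}}]-\E[e^{\mathrm{i}sX_p}]\bigr|$ becomes free of $c_p$ and the finite-net argument disappears entirely. The remaining estimate $\E[|e^{\mathrm{i}tY_{n,p}}-e^{\mathrm{i}tc_p}|]\le |t|\delta+2\,\P[|Y_{n,p}-c_p|>\delta]$ is a clean use of uniform-over-$p$ convergence in probability. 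So your main proof is a valid and arguably tidier alternative to the paper's.

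One caveat, which your parenthetical gets slightly wrong: you assert that boundedness of $\{c_p\}$ is not needed for this lemma. It is not used in your characteristic-function estimate, but it \emph{is} needed to make Theorem~\ref{levy} (equivalently, the Portmanteau machinery it rests on) applicable to the two-dimensional measures $\mu_p=\mathrm{law}(X_p,c_p)$: the hard direction of Theorem~\ref{levy} only delivers Portmanteau criterion (iii), and the step back to criterion (i) relies on assumption \ref{assumption1}, hence at minimum on tightness, of the family $\{\mu_p\}$ in $\mathbb{R}^2$. Tightness of $\{\mathrm{law}(X_p,c_p)\}$ requires $\{c_p\}$ to be bounded, so boundedness cannot be dropped from the hypotheses. (The same implicit reliance on \ref{assumption1} in $\mathbb{R}^2$ is present in the paper's own proof, which invokes Portmanteau (ii)$\Rightarrow$(i); you are inheriting, not introducing, that dependence.)
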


Now, the uniform-over-$p$ Slutsky's theorem follows from \autoref{mappingthm} and \autoref{lemmaSlutsky}.

\begin{theorem}[Uniform-over-$p$ Slutsky's theorem]\label{Slutsky}
Let $\{ X_{n, p} \}$, $\{ Y_{n, p} \}$ and $\{ X_p \}$ be real-valued random variables and $\{ c_p \}$ be a bounded sequence of real numbers such that $X_{n, p} \Longrightarrow X_p$ uniformly-over-$p$ and $Y_{n, p} \stackrel{\P}{\longrightarrow} c_p$ uniformly-over-$p$. Then,
\begin{enumerate}[label = (\alph*), ref = (\alph*)]
\item $X_{n, p} + Y_{n, p} \Longrightarrow X_p + c_p$ uniformly-over-$p$,
\label{s1}

\item $ X_{n, p} Y_{n, p} \Longrightarrow c_p X_p$ uniformly-over-$p$,
\label{s2}

\item $ X_{n, p} / Y_{n, p} \Longrightarrow X_p / c_p$ uniformly-over-$p$ if $\{ c_p \}$ is bounded away from 0, i.e., $\inf_p | c_p | > 0$.
\label{s3}
\end{enumerate}
\end{theorem}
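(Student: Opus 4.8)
The plan is to derive all three parts from a single joint convergence statement, using \autoref{lemmaSlutsky} to upgrade the one-dimensional hypotheses to a bivariate uniform-over-$p$ convergence and then \autoref{mappingthm} to push it forward through a suitable continuous map. Concretely, \autoref{lemmaSlutsky} applied to $X_{n,p}\Longrightarrow X_p$ uniformly-over-$p$ and $Y_{n,p}\stackrel{\P}{\longrightarrow} c_p$ uniformly-over-$p$ yields $(X_{n,p},Y_{n,p})^\top \Longrightarrow (X_p,c_p)^\top$ uniformly-over-$p$. For \ref{s1} I would then apply \autoref{mappingthm} with the continuous map $g(x,y)=x+y$, and for \ref{s2} with $g(x,y)=xy$, obtaining $X_{n,p}+Y_{n,p}\Longrightarrow X_p+c_p$ and $X_{n,p}Y_{n,p}\Longrightarrow c_p X_p$ uniformly-over-$p$ at once.

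For \ref{s3} the map $(x,y)\mapsto x/y$ is not continuous on all of $\mathbb{R}^2$, and this is exactly the place where the hypothesis $\delta:=\inf_p|c_p|>0$ is used. I would fix a bounded continuous function $\phi:\mathbb{R}\to\mathbb{R}$ with $\phi(y)=1/y$ for all $|y|\ge\delta/2$ (for instance $\phi(y)=(2/\delta)\,\mathrm{sgn}(y)$ on $|y|<\delta/2$), so that $g(x,y)=x\,\phi(y)$ is continuous on $\mathbb{R}^2$. Since $|c_p|\ge\delta>\delta/2$, \autoref{mappingthm} gives $X_{n,p}\,\phi(Y_{n,p})\Longrightarrow X_p\,\phi(c_p)=X_p/c_p$ uniformly-over-$p$.

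It then remains to replace $X_{n,p}\phi(Y_{n,p})$ by $X_{n,p}/Y_{n,p}$. The two agree on $\{|Y_{n,p}|\ge\delta/2\}$, and $\{|Y_{n,p}|<\delta/2\}\subseteq\{|Y_{n,p}-c_p|>\delta/2\}$, so uniform-over-$p$ convergence in probability of $Y_{n,p}$ to $c_p$ forces $\sup_p\P[\,|Y_{n,p}|<\delta/2\,]\to 0$ as $n\to\infty$; hence $X_{n,p}/Y_{n,p}-X_{n,p}\phi(Y_{n,p})\stackrel{\P}{\longrightarrow}0$ uniformly-over-$p$. Feeding $X_{n,p}\phi(Y_{n,p})\Longrightarrow X_p/c_p$ together with this difference (which converges uniformly-over-$p$ in probability to the bounded constant $0$) back into \autoref{lemmaSlutsky}, and then applying \autoref{mappingthm} with $(u,v)\mapsto u+v$, yields $X_{n,p}/Y_{n,p}\Longrightarrow X_p/c_p$ uniformly-over-$p$, completing \ref{s3}.

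I expect the only genuine obstacle to be the globalization of the division map in \ref{s3}: the truncation threshold $\delta/2$ must be chosen independently of $p$, which is possible precisely because $\inf_p|c_p|>0$; if that uniform lower bound failed, no single $\phi$ would serve for all $p$ and the reduction to \autoref{mappingthm} would break down. Everything else is a faithful transcription of the classical Slutsky argument, the substantive content having already been absorbed into \autoref{lemmaSlutsky}.
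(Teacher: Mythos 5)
Your parts \ref{s1} and \ref{s2} coincide exactly with the paper: invoke \autoref{lemmaSlutsky} to get $(X_{n,p},Y_{n,p})^\top \Longrightarrow (X_p,c_p)^\top$ uniformly-over-$p$, then push through addition and multiplication via \autoref{mappingthm}.

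For \ref{s3} you hit on the same key idea as the paper (replace $1/y$ by a bounded continuous surrogate that agrees with it for $|y|\ge\delta/2$, with $\delta/2$ chosen below $\inf_p|c_p|$), but the route you take after that is genuinely different. The paper proves \ref{s3} by a direct estimate: for any bounded continuous $f$ it bounds $\sup_p\left|\E[f(X_{n,p}/Y_{n,p})]-\E[f(X_p/c_p)]\right|$ by $2\sup|f|\cdot\sup_p\P[|Y_{n,p}-c_p|>l/2]$ (the probability that the surrogate and $x/y$ disagree) plus $\sup_p\left|\E[f(g(X_{n,p},Y_{n,p}))]-\E[f(g(X_p,c_p))]\right|$, and sends both to zero. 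You instead observe that $X_{n,p}/Y_{n,p}-X_{n,p}\phi(Y_{n,p})\stackrel{\P}{\longrightarrow}0$ uniformly-over-$p$ and then feed $X_{n,p}\phi(Y_{n,p})\Longrightarrow X_p/c_p$ and this difference back into \autoref{lemmaSlutsky} and \autoref{mappingthm}. That is cleaner in spirit, but it requires a second application of \autoref{lemmaSlutsky} with the limiting family $\{X_p/c_p\}$ in place of $\{X_p\}$, and \autoref{lemmaSlutsky} rests (through the Portmanteau theorem) on assumption \ref{assumption1} for the limiting measures. Total-variation relative compactness of the laws of $X_p$ does not automatically transfer to the laws of $X_p/c_p$ when the $c_p$ vary with $p$ (for example, atoms get rescaled by different factors along different $p$), so the re-invocation needs a separate justification that you have not supplied. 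The paper's direct estimate sidesteps this entirely, since it only uses the already-established joint convergence $(X_{n,p},Y_{n,p})^\top\Longrightarrow(X_p,c_p)^\top$ and never re-applies the lemma.

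One technical nit in your construction of $\phi$: the choice $\phi(y)=(2/\delta)\,\mathrm{sgn}(y)$ on $|y|<\delta/2$ is discontinuous at $y=0$, so $g(x,y)=x\,\phi(y)$ is not continuous on $\mathbb{R}^2$ and \autoref{mappingthm} does not apply as written. You should interpolate continuously, e.g.\ take $\phi(y)=4y/\delta^2$ on $|y|\le\delta/2$. (The paper's own surrogate $g(x,y)=2x/l$ on $|y|<l/2$ has an analogous defect at $y=-l/2$, so this is an easy fix shared by both arguments rather than a conceptual difference.)
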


\section{Multi-sample location tests}\label{sec:multisample}
In this section, we demonstrate how the preceding theory of convergence helps us derive asymptotic distributions of test statistics independent of any assumption on the dimension.

Let $\nu_{p, 0}$ be a probability measure on $\mathbb{R}^p$, $\ba_{p, 1}, \ldots, \ba_{p, K}$ be $p$-dimensional vectors and $\nu_{p, k}$ denote the linear shift of $\nu_{p, 0}$ by $\ba_{p, k}$, i.e., if the random vector $\bY_p$ has the probability measure $\nu_{p, 0}$, then $\nu_{p, k}$ is the probability measure corresponding to $\bY_p + \ba_{p, k}$. We consider a sample of size $n$ consisting of $n_k$ independent observations generated from the probability measures $\nu_{p, k}$  for $k = 1, \ldots, K$. The observations from the measure $\nu_{p, k}$ are denoted as $\bY_{p, k, i}$, where $i = 1, \ldots, n_k$ and $k = 1, \ldots, K$.
We are interested in testing the hypothesis
\begin{align} \label{h_0}
\text{H}_0 : \; \ba_{p, 1} = \ba_{p, 2} = \cdots = \ba_{p, K} \;.
\end{align}

Let $\bh_p( \cdot, \cdot ) : \mathbb{R}^p \times \mathbb{R}^p \to \mathbb{R}^p$ satisfy that $\bh_p( \bx, \by ) = - \bh_p( \by, \bx )$ for all $\bx, \by \in \mathbb{R}^p$. Define
$\bR_p( \by ) = n^{-1} \sum_{k=1}^{K} \sum_{i=1}^{n_k} \bh_p( \by, \bY_{p, k, i} )$.
Also, define $\bar{\bR}_{p, k} = n_k^{-1} \sum_{i = 1}^{n_k} \bR_p\left( \bY_{p, k, i} \right)$.
Note that $\E[ \bar{\bR}_{p, k} ] = n^{-1} \sum_{l=1}^{K} n_l \allowbreak \E[ \bh_p( \bY_{p, k, 1}, \bY_{p, l, 1} ) ] = \mathbf{0}_p$ for all $k$ under $\text{H}_0$ in \eqref{h_0}. So, a test of $\text{H}_0$ may be conducted based on the magnitudes of $ \bar{\bR}_{p, k} $ for $ k = 1, \ldots, K $, and a high value of $\left\| \bar{\bR}_{p, k} \right\|$ for any $k$ would indicate that $ \text{H}_0 $ in \eqref{h_0} is not true. Define the test statistic
\begin{align*}
S_{n, p} = \sum_{k=1}^{K} n_k \left\| \bar{\bR}_{p, k} \right\|^2 .
\end{align*}
The hypothesis $ \text{H}_0 $ in \eqref{h_0} is to be rejected if the value of $ S_{n, p} $ is large.

When $\bh_p( \bx, \by ) = \bx - \by$ and $K = 2$, we get the test proposed by \cite{zhang2020simple}. We shall denote this test by \cite{zhang2020simple} as ZGZC2020 test in our subsequent data analysis. If $\bh_p( \bx, \by ) = ( \bx - \by ) / \| \bx - \by \|$ but the observations are from a separable Hilbert space $\mathcal{H}$, then we get the test proposed by \cite{chowdhury2022multi}. We shall denote this test as the SS test. A similar test was proposed by \cite{choi1997approach}, but they normalized the test statistic using the estimated covariance of the vector of $\bar{\bR}_{p, k}$. Another test based on spatial signs for the two-sample test was proposed in \cite{feng2016multivariate}, but the authors there normalize the test statistic using a diagonal matrix. Further, the asymptotic theory for that test was derived assuming certain conditions on the growth of $p$ with $n$.

To derive the asymptotic distribution of the test statistic, we need the following lemma.

\begin{lemma}\label{lemma_gaussian}
Let $\bZ_{p, 1}, \ldots, \bZ_{p, n}$ be independent $p$-dimensional random vectors with $\E[ \bZ_{p, 1} ] = \mathbf{0}_p$ and $\Var( \bZ_{p, 1} ) = \bSigma_p$. Let $\bar{\bZ}_{p} = n^{-1} \sum_{i=1}^n \bZ_{p, i}$ and $\delta_{p, 1} \ge \cdots \ge \delta_{p, p}$ be the $p$-eigenvalues of $\bSigma_p$.
Suppose the following are satisfied:
\begin{enumerate}[label = (\alph*), ref = (\alph*)]
\item $n_k / n \to \gamma_k \in (0, 1)$ for $k = 1, \ldots, K$.

\item There exists $\rho_k$ for $k = 1, 2, \ldots$, such that $\lim_{p \to \infty} \left( \delta_{p, k} / \sqrt{\sum_{k = 1}^p \delta_{p, k}^2} \right) = \rho_k$ uniformly over $k$ and $\sum_{k = 1}^\infty \rho_k < \infty$.
\end{enumerate}
Then,
\begin{align*}
\frac{n \left\| \bar{\bZ}_{p} \right\|^2 - \sum_{k=1}^p \delta_{p, k}}{\left( \sum_{k=1}^p \delta_{p,k}^2 \right)^{1/2}} \Longrightarrow \frac{\sum_{k=1}^p \delta_{p, k} W_k}{\left( \sum_{k=1}^p \delta_{p,k}^2 \right)^{1/2}}
\end{align*}
uniformly-over-p as $n \to \infty$.
\end{lemma}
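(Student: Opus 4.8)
The plan is to use the uniform-over-$p$ L\'evy continuity theorem (\autoref{levy}) as the backbone: it suffices to show that the characteristic functions of the left-hand side converge, uniformly over $p$, to those of the right-hand side, as $n\to\infty$. Write $S_{n,p}^{*} := (n\|\bar\bZ_p\|^2 - \sum_k \delta_{p,k})/(\sum_k\delta_{p,k}^2)^{1/2}$ and diagonalize $\bSigma_p = \bU_p \mathrm{diag}(\delta_{p,1},\dots,\delta_{p,p}) \bU_p^\top$. Setting $\bZ_{p,i}' = \bU_p^\top \bZ_{p,i}$ leaves $\|\bar\bZ_p\|^2$ invariant while making the coordinates of $\bar\bZ_p'$ uncorrelated with variances $\delta_{p,j}/n$. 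Then $n\|\bar\bZ_p\|^2 = \sum_{j=1}^p (\sqrt{n}\,\bar Z_{p,j}')^2$. The target random variable is $R_p := \sum_{k=1}^p \delta_{p,k} W_k / (\sum_k \delta_{p,k}^2)^{1/2}$ with $W_k$ presumably i.i.d.\ standard (centered chi-square of one degree of freedom, i.e.\ $W_k \sim \chi^2_1 - 1$), so its characteristic function is an explicit infinite product $\prod_k (1 - 2\mathrm{i}t\,\delta_{p,k}/(\sum_l\delta_{p,l}^2)^{1/2})^{-1/2} e^{-\mathrm{i}t\delta_{p,k}/(\sum_l\delta_{p,l}^2)^{1/2}}$; condition (b) ensures this product converges and, after the normalization, that $\sum_k (\delta_{p,k}/(\sum_l\delta_{p,l}^2)^{1/2})^2 = 1$ with the individual terms $\to \rho_k$ controlled uniformly, so the whole collection $\{\mu_{R_p}\}$ is tight and in fact relatively compact in total variation, giving \ref{assumption1}.

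The core estimate is a \emph{quantitative} multivariate CLT / Lindeberg-type bound for the quadratic form $\sum_{j=1}^p (\sqrt{n}\bar Z_{p,j}')^2$, uniform in $p$. I would bound $|\varphi_{S_{n,p}^*}(t) - \varphi_{R_p}(t)|$ by a telescoping / Lindeberg swap argument: replace, one coordinate block at a time, the true $\sqrt{n}\,\bar Z_{p,j}'$ (which by the one-dimensional CLT is approximately $\mathcal N(0,\delta_{p,j})$) by an exact Gaussian with the same variance; each swap costs a term controlled by third-or-fourth moments of $\bar Z_{p,j}'$, which are $O(n^{-1/2})$ or $O(n^{-1})$ by independence of the $\bZ_{p,i}$, times the weight $\delta_{p,j}/(\sum_l\delta_{p,l}^2)^{1/2}$. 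Summing the weights is where condition (b) is essential: $\sum_j \delta_{p,j}/(\sum_l\delta_{p,l}^2)^{1/2} = (\sum_l\delta_{p,l}^2)^{1/2}\sum_j \rho_{p,j}$-ish quantities need to stay bounded, which follows because $\rho_{p,j}\to\rho_j$ uniformly in $j$ and $\sum_j\rho_j<\infty$ forces $\sup_p\sum_j\delta_{p,j}/(\sum_l\delta_{p,l}^2)^{1/2}<\infty$. Once the per-swap bound is $o(1)$ times a $p$-uniformly-summable weight, the total error is $o(1)$ as $n\to\infty$ uniformly over $p$, and an extra uniform smoothing/truncation handles the infinite tail of coordinates in a single estimate using $\sum_{k>N}\rho_k\to 0$.

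After the Gaussian replacement, the remaining object is $\sum_{j=1}^p \delta_{p,j} \tilde W_j/(\sum_l\delta_{p,l}^2)^{1/2}$ with $\tilde W_j$ \emph{exactly} $\chi^2_1-1$, whose characteristic function equals $\varphi_{R_p}(t)$ identically — so there is nothing left to prove at that stage beyond noting that the finite-$p$ and limiting expressions coincide, since the right-hand side of the displayed convergence is itself indexed by $p$. Thus the statement reduces to: (i) verify \ref{assumption1} for $\{\mu_{R_p}\}$, and (ii) prove $\sup_p|\varphi_{S_{n,p}^*}(t)-\varphi_{R_p}(t)|\to 0$ for each fixed $t$, then invoke \autoref{levy}. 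I expect the main obstacle to be step (ii): making the Lindeberg swap genuinely uniform in $p$ when $p$ may grow — one must avoid any bound that degrades as the number of coordinates increases, and this is precisely what assumption (b) is engineered to prevent, but carrying a clean $p$-free constant through the fourth-moment terms and the tail truncation simultaneously requires care. A secondary technical point is handling coordinates with very small $\delta_{p,j}$ (where the normal approximation for $\sqrt n \bar Z_{p,j}'$ is weak relative to its own scale) — these are harmless because their \emph{weights} $\delta_{p,j}/(\sum_l\delta_{p,l}^2)^{1/2}$ are tiny, so they can be lumped into a single residual term bounded via a second-moment (Markov) argument rather than a CLT.
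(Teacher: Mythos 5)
Your high-level blueprint matches the paper's: diagonalize $\bSigma_p$ so that $n\|\bar\bZ_p\|^2 = \sum_k\delta_{p,k}\,n(\bar V_{p,k})^2$ with uncorrelated coordinates, show uniform-over-$p$ convergence of characteristic functions for fixed $t$, invoke \autoref{levy}, and use condition (b) to kill the contribution of high-index coordinates. You also correctly read $W_k$ in the lemma as centered chi-squared quantities (the paper's own proof works with $W_k-1$, $W_k\sim\chi^2_1$).

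Where the approaches genuinely diverge is the core estimate. The paper does \emph{not} use a coordinate-wise Lindeberg swap. It truncates at $q$ coordinates, applies the ordinary fixed-dimensional CLT jointly to $(\sqrt n\bar V_{p,1},\dots,\sqrt n\bar V_{p,q})$ (whose limiting covariance is $I_q$, so the limiting chi-squared summands are automatically independent), and controls the tail $\sum_{k>q}$ by a single second-moment/Markov estimate using the bound $\Var\{(\bar V_{p,k})^2\}\le M/n$ from Zhang et al.\ together with boundedness of $\sum_k\delta_{p,k}/(\sum_l\delta_{p,l}^2)^{1/2}$. The same truncation is mirrored on the limiting side, and the three pieces are combined by a triangle inequality.

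The gap in your proposal is the coordinate-wise replacement step. The rotated coordinates $\bar V_{p,1},\dots,\bar V_{p,p}$ (equivalently $\bar Z'_{p,1},\dots,\bar Z'_{p,p}$) are only \emph{uncorrelated}, not independent — the lemma makes no distributional assumption forcing independence after rotation. A telescoping swap that replaces $\sqrt n\bar Z'_{p,j}$ by an independent Gaussian one index $j$ at a time, with each swap error controlled by the \emph{marginal} moments of $\bar Z'_{p,j}$ via the one-dimensional CLT, implicitly treats the coordinates as independent: the factorization needed to isolate the $j$-th swap error and bound it by a quantity that involves only the $j$-th coordinate's moments fails when the untouched coordinates are statistically dependent on the one being swapped. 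The natural fix is exactly what the paper does — apply the \emph{multivariate} CLT to the first $q$ coordinates simultaneously (which absorbs the dependence into the joint Gaussian limit, and the identity covariance then delivers independence of the limiting $W_k$'s for free), and treat the remaining coordinates as a single residual via Markov, using condition (b) to make that residual small uniformly in $p$. Your ``secondary technical point'' about small-$\delta_{p,j}$ coordinates is, in effect, a description of this residual step, so the missing piece is specifically the treatment of the leading $q$ coordinates: you need the joint CLT there, not $q$ separate one-dimensional swaps.

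A smaller point: the per-swap error you quote ($O(n^{-1/2})$ or $O(n^{-1})$) is not quite what you need. The quantity $n(\bar V_{p,k})^2-1$ has $\Theta(1)$ variance; what vanishes with $n$ is its distributional discrepancy from $W_k-1$, and the paper's tail bound rests on $\Var\{(\bar V_{p,k})^2\}\le M/n$ (which presupposes a uniform fourth-moment condition on the $V_{p,k,i}$, implicit in the paper). Any quantitative swap bound would need to be stated at that level, and in any case does not rescue the independence issue above.
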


The following theorem yields the asymptotic uniform-over-$p$ null distribution of our proposed testing procedure.

\begin{theorem}\label{thm:1}
Assume that $ n^{-1} n_k \to \lambda_k \in ( 0, 1 ) $ for all $ k $ as $ n \to \infty $.
Let $ \bX_{p, i} $, $ \bY_{p, j} $ and $ \bZ_{p, k} $ be independent random vectors having distributions of the $ i^\text{th} $, the $ j^\text{th} $ and the $ k^\text{th} $ groups of the sample, respectively.
Define
\begin{align*}
\bC_p( i, j, k ) = \Cov\left( \E\left[ \bh_p\left( \bX_{p, i} , \bZ_{p, k} \right) \mid \bZ_{p, k} \right] ,\, \E\left[ \bh_p\left( \bY_{p, j} , \bZ_k \right) \mid \bZ_k \right] \right)
\end{align*}
and 
$ \bSigma_p = \left( \boldsymbol{\sigma}_{p, k_1, k_2} \right)_{K \times K} $, where
\begin{align*}
\boldsymbol{\sigma}_{p, k_1, k_2}
& = \sqrt{\lambda_{p, k_1} \lambda_{p, k_2}} \sum_{l=1}^{K} \lambda_{p, l}
\left[ \bC_p( k_1, k_2, l ) - \bC_p( l, k_2, k_1 ) - \bC_p( k_1, l, k_2 ) \right] \\
& \quad + \sum_{l_1 = 1}^{K} \sum_{l_2 = 1}^{K} \lambda_{l_1} \lambda_{l_2} \bC_p( l_1, l_2, k_1 ) \mathbb{I}( k_1 = k_2 ) .
\end{align*}
Let $\{ \gamma_{p, i} \}$ be the $p K$ eigenvalues of the matrix $\bSigma_p$, which satisfy condition (b) in \autoref{lemma_gaussian}.
Then,
\begin{align*}
\frac{\sum_{k=1}^{K} n_k \left\| \bar{\bR}_{p, k} - \E[ \bar{\bR}_{p, k} ] \right\|^2}{\left( 2 \sum_{i = 1}^{p K} \gamma_{p, i}^2 \right)^{1/2}} \Longrightarrow \frac{\sum_{i = 1}^{p K} \gamma_{p, i} U_i}{\left( 2 \sum_{i = 1}^{p K} \gamma_{p, i}^2 \right)^{1/2}}
\end{align*}
uniformly-over-$p$ as $ n \to \infty $, where $\{ U_i \}$ are $p K$ independent random variables following the $\chi^2$ distribution with degree of freedom 1.
\end{theorem}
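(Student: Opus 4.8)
The plan is to reduce the centred statistic to a quadratic form in a sum of independent random vectors via a H\'{a}jek-type linear projection, and then to invoke \autoref{lemma_gaussian}. Since $\bh_p$ is antisymmetric we have $\bh_p(\bx,\bx)=\mathbf{0}_p$ and the within-group terms of $\bar{\bR}_{p,k}$ cancel in pairs, so that $\bar{\bR}_{p,k}=(n_kn)^{-1}\sum_{j=1}^{n_k}\sum_{l\neq k}\sum_{i=1}^{n_l}\bh_p(\bY_{p,k,j},\bY_{p,l,i})$ is a sum of purely two-sample kernel terms. Writing $\boldsymbol{\psi}_{p,k}(\by)=\E[\bh_p(\bX_{p,k},\by)\mid\by]$, antisymmetry gives $\E[\bh_p(\by,\bZ_{p,l})\mid\by]=-\boldsymbol{\psi}_{p,l}(\by)$, so the H\'{a}jek projection $\hat{\bR}_{p,k}$ of $\bar{\bR}_{p,k}-\E[\bar{\bR}_{p,k}]$ collects from each observation $\bY_{p,m,r}$ a contribution $-(n_kn)^{-1}\sum_{l\neq k}n_l\{\boldsymbol{\psi}_{p,l}(\bY_{p,m,r})-\E\boldsymbol{\psi}_{p,l}(\bZ_{p,k})\}$ when $m=k$ and a contribution $n^{-1}\{\boldsymbol{\psi}_{p,k}(\bY_{p,m,r})-\E\boldsymbol{\psi}_{p,k}(\bZ_{p,m})\}$ when $m\neq k$. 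Stacking the $K$ blocks $\sqrt{n_k}\,\hat{\bR}_{p,k}$ into a $pK$-dimensional vector $\bV_p$, one has $\bV_p=\sum_i\bZ_{p,i}$ with the $\bZ_{p,i}$ independent and mean zero (i.i.d.\ within each group), $\sum_{k=1}^{K}n_k\|\hat{\bR}_{p,k}\|^2=\|\bV_p\|^2=n\|\bar{\bZ}_p\|^2$ with $\bar{\bZ}_p=\bV_p/\sqrt n$, and a direct (if lengthy) computation of $\Var(\bV_p)$ in terms of the covariances $\bC_p(\cdot,\cdot,\cdot)$ of the conditional means of $\bh_p$ reproduces exactly the matrix $\bSigma_p$ of the statement; in particular the eigenvalues of $\Var(\bV_p)$ are the $\gamma_{p,i}$ and $\E\|\bV_p\|^2=\sum_{i=1}^{pK}\gamma_{p,i}$.

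The main obstacle is to show that the residual $\br_{p,k}:=\bar{\bR}_{p,k}-\E[\bar{\bR}_{p,k}]-\hat{\bR}_{p,k}$ is asymptotically negligible after normalization, uniformly over $p$. Since $\br_{p,k}$ is the completely degenerate part of the Hoeffding decomposition of a two-sample $V$-statistic, $n_k\E\|\br_{p,k}\|^2=O\big(n^{-1}\,\mathrm{tr}\,\Var(g_{p}^{(2)})\big)$, where $g_p^{(2)}$ is the doubly-centred kernel; one must verify that $\mathrm{tr}\,\Var(g_p^{(2)})=o\big((\sum_i\gamma_{p,i}^2)^{1/2}\big)$ uniformly in $p$, which — because $(\sum_i\gamma_{p,i}^2)^{1/2}\asymp\gamma_{p,1}$ under condition (b) of \autoref{lemma_gaussian} and $n\to\infty$ — reduces to comparing the trace of the interaction covariance of $\bh_p$ with the leading eigenvalue of its first-order reduced covariance (for the linear kernel $\bh_p(\bx,\by)=\bx-\by$ the residual vanishes identically, and for kernels that concentrate in high dimension, such as $\bh_p(\bx,\by)=(\bx-\by)/\|\bx-\by\|$, it is controlled via concentration of $\|\bx-\by\|$). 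Granting this, Cauchy--Schwarz together with $\sum_k n_k\|\hat{\bR}_{p,k}\|^2/(\sum_i\gamma_{p,i}^2)^{1/2}=O_{\P}(1)$ uniformly in $p$ (its mean $\sum_i\gamma_{p,i}$ being $O((\sum_i\gamma_{p,i}^2)^{1/2})$ by condition (b)) yields $\big\{\sum_k n_k\|\bar{\bR}_{p,k}-\E\bar{\bR}_{p,k}\|^2-\|\bV_p\|^2\big\}/(2\sum_i\gamma_{p,i}^2)^{1/2}\stackrel{\P}{\longrightarrow}0$ uniformly-over-$p$.

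Finally, apply \autoref{lemma_gaussian} to $\bar{\bZ}_p$ with $pK$ in place of $p$ and $\bSigma_p$ as above — its proof extends verbatim to a sum over finitely many i.i.d.\ groups, since the characteristic function of $\bV_p$ factorizes over the groups and each factor is handled via \autoref{levy} as in the i.i.d.\ case — to obtain $\{\|\bV_p\|^2-\sum_i\gamma_{p,i}\}/(\sum_i\gamma_{p,i}^2)^{1/2}\Longrightarrow \sum_i\gamma_{p,i}W_i/(\sum_i\gamma_{p,i}^2)^{1/2}$ uniformly-over-$p$, where the $W_i=U_i-1$ are independent centred $\chi^2_1$ variables. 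Dividing by $\sqrt2$ and adding the bounded deterministic sequence $c_p=\sum_i\gamma_{p,i}/\{\sqrt2\,(\sum_i\gamma_{p,i}^2)^{1/2}\}$ (bounded because $c_p\to\sum_i\rho_i<\infty$ by condition (b)), the uniform-over-$p$ Slutsky theorem (\autoref{Slutsky}) gives $\|\bV_p\|^2/(2\sum_i\gamma_{p,i}^2)^{1/2}\Longrightarrow \sum_i\gamma_{p,i}(W_i+1)/(2\sum_i\gamma_{p,i}^2)^{1/2}=\sum_i\gamma_{p,i}U_i/(2\sum_i\gamma_{p,i}^2)^{1/2}$ uniformly-over-$p$. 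Combining this with the negligibility of the residual from the previous paragraph, one more application of \autoref{Slutsky} delivers the claimed uniform-over-$p$ convergence. The only delicate point is the uniform-over-$p$ bound of the preceding paragraph; everything else is the H\'{a}jek-projection bookkeeping together with the already-established uniform-over-$p$ limit theorems.
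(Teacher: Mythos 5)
Your proposal follows essentially the same route as the paper: a H\'{a}jek (first-order Hoeffding) projection of each $\bar{\bR}_{p,k}-\E[\bar{\bR}_{p,k}]$, a claim that the degenerate residual is negligible uniformly-over-$p$, stacking the $K$ projected blocks into a $pK$-dimensional centered sum, computing its covariance to get $\bSigma_p$, invoking \autoref{lemma_gaussian}, and finishing with \autoref{Slutsky}. The paper introduces the centered kernel $\tilde{\bh}_p$, the projection blocks $\bH_{p,k}$, the residuals $\bG_{p,k,l}$ and the doubly-centered kernel $\bar{\bh}_p$ to carry out exactly this bookkeeping; your $\boldsymbol{\psi}_{p,k}$ and $\bV_p$ are the same objects in different notation.

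Two observations you flag are genuinely worth noting. First, you are right that the residual negligibility needs justification uniformly-over-$p$. The paper's display in the proof computes $\E\big[\|\sqrt{n_k}\,n^{-1}\sum_l n_l\bG_{p,k,l}\|^2\big]$ and declares that it tends to $0$ uniformly-over-$p$; but that bound is of order $n^{-1}\E\|\bar{\bh}_p\|^2$, and a uniform-in-$p$ bound on $\E\|\bar{\bh}_p\|^2$ (or a relative bound against the normalization $(\sum_i\gamma_{p,i}^2)^{1/2}$) is never stated. Your observation that the residual vanishes identically for $\bh_p(\bx,\by)=\bx-\by$ and is trivially bounded for the spatial-sign kernel (where $\|\bh_p\|\le 1$) identifies why this gap is harmless in the two cases the paper actually uses, but the general statement does implicitly lean on such a hypothesis. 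Second, you correctly notice that the stacked summands are independent but only i.i.d.\ within each group, so \autoref{lemma_gaussian} as literally stated (common first two moments for $\bZ_{p,1}$) requires a mild extension to the multi-group setting. The paper applies the lemma without commenting on this; your remark that the characteristic-function argument factorizes over groups is the right reason it goes through.

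One small error in your sketch: the claim that $(\sum_i\gamma_{p,i}^2)^{1/2}\asymp\gamma_{p,1}$ under condition (b) of \autoref{lemma_gaussian} is not correct in general. Condition (b) allows $\rho_1=0$ (for example, $\gamma_{p,i}\propto 1/\sqrt{pK}$ for all $i$), in which case the leading eigenvalue is asymptotically negligible relative to $(\sum_i\gamma_{p,i}^2)^{1/2}$. This does not change the structure of your argument, but the reduction of the residual bound to a comparison against $\gamma_{p,1}$ alone would need to be replaced by a comparison against $(\sum_i\gamma_{p,i}^2)^{1/2}$ directly.
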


From \autoref{thm:1}, we get the uniformly-over-$p$ asymptotic null distribution of the proposed kernel-based test. It coincides with the asymptotic null distribution for $n, p \to \infty$ of the test by \cite{zhang2020simple} when $\bh_p( \bx, \by ) = \bx - \by$ and $K = 2$. The implementation of the test is also the same as in \cite{zhang2020simple}.
However, the choice $\bh_p( \bx, \by ) = \bx - \by$ requires the existence of mean and variance of the random vectors, which may not be the case for some heavy-tailed distributions, e.g., the multivariate Cauchy distribution. On the other hand, the choice $\bh_p( \bx, \by ) = ( \bx - \by ) / \| \bx - \by \|$ ensures all the moments exist, because the kernel $\bh_p( \bx, \by )$ itself is bounded. This choice can be more widely applied without regard to the existence of moments of the underlying random vectors.

Based on \autoref{lemma_gaussian} and \autoref{thm:1}, we establish the following result, which implies the uniform-over-$p$ asymptotic unbiasedness and the uniform-over-$p$ asymptotic consistency of our proposed testing method.

\begin{theorem}\label{thm:2}
Let $ n^{-1} n_k \to \lambda_k \in ( 0, 1 ) $ for all $ k $ as $ n \to \infty $, and let $\{ \gamma_{p, i} \}$ be the $p K$ eigenvalues of the matrix $\bSigma_p$ defined in \autoref{thm:1}, which satisfy condition (b) in \autoref{lemma_gaussian}.
\begin{enumerate}[label=(\alph*)]
\item Under $ \text{H}_0 $ in \eqref{h_0} and for every $ 0 < \alpha < 1 $, the size of the level $ \alpha $ kernel-based test converges to $ \alpha $ as $ n \to \infty $ uniformly-over-$p$.

\item Next, suppose $ \nu_{p,0} $ and $\mathbf{h}_p( \cdot, \cdot )$ satisfy the following:
\begin{itemize}
\item If $\bX_{p}$ and $\bY_{p}$ are independent random vectors with distribution $\nu_{p,0}$ and $\bb_{p,1} \neq \bb_{p,2}$, then $E[ \mathbf{h}_p( \bb_{p,1} + \bX_{p}, \bb_{p,2} + \bY_{p} ) ] \neq \mathbf{0}_p$ for all $p$.
\end{itemize}
In this case, if $ \text{H}_0 $ in \eqref{h_0} is not true,
we have $ \sum_{l=1}^{K} \lambda_l E[ \mathbf{h}_p( \mathbf{X}_{k 1} - \mathbf{X}_{l 1} ) ] \neq \mathbf{0}_p $ for every $ k $, and hence for every $ 0 < \alpha < 1 $, the power of the level $ \alpha $ kernel-based test converges to $ 1 $ as $ n \to \infty $ uniformly-over-$p$.
\end{enumerate}
\end{theorem}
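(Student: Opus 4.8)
The plan is to derive both statements from \autoref{thm:1} and \autoref{lemma_gaussian}, the only extra ingredient being an analytic strengthening that makes a $p$-dependent critical value admissible inside a uniform-over-$p$ weak-convergence statement. For part (a), note that under $\text{H}_0$ we have $\E[\bar{\bR}_{p,k}]=\mathbf 0_p$, so $S_{n,p}=\sum_k n_k\|\bar{\bR}_{p,k}-\E[\bar{\bR}_{p,k}]\|^2$ and \autoref{thm:1} gives
\begin{align*}
\tilde S_{n,p}:=\frac{S_{n,p}}{(2\sum_i\gamma_{p,i}^2)^{1/2}}\ \Longrightarrow\ T_p:=\frac{\sum_i\gamma_{p,i}U_i}{(2\sum_i\gamma_{p,i}^2)^{1/2}}\qquad\text{uniformly-over-}p,
\end{align*}
with $U_i$ i.i.d.\ $\chi^2_1$. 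The level-$\alpha$ test rejects when $\tilde S_{n,p}>q_{p,\alpha}$, $q_{p,\alpha}$ being the $(1-\alpha)$-quantile of the law $G_p$ of $T_p$; since $T_p$ is a nonnegatively weighted sum of independent $\chi^2_1$'s it is absolutely continuous, so $G_p$ is continuous, $G_p(q_{p,\alpha})=1-\alpha$, and the size equals $1-G_{n,p}(q_{p,\alpha})$, where $G_{n,p}$ is the distribution function of $\tilde S_{n,p}$. Thus it suffices to show $\sup_p|G_{n,p}(q_{p,\alpha})-G_p(q_{p,\alpha})|\to0$.

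The heart of part (a) is to upgrade uniform-over-$p$ weak convergence — which via item~(vi) of \autoref{portmanteau} only gives $\sup_p|G_{n,p}(x)-G_p(x)|\to0$ at each fixed common continuity point $x$ — to a bound uniform also in the argument, so that it may be evaluated at the moving point $q_{p,\alpha}$. I would pass to the centered quantities $\tilde S_{n,p}-m_p$ and $T_p-m_p$ with $m_p:=\E[T_p]$: their characteristic functions differ from the uncentered ones only by the unimodular factor $e^{-itm_p}$, so the uniform-over-$p$ convergence of characteristic functions provided by \autoref{thm:1} (via \autoref{levy}) is inherited and $\tilde S_{n,p}-m_p\Longrightarrow T_p-m_p$ uniformly-over-$p$. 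The centered family $\{T_p-m_p\}$ has mean $0$ and variance $1$ for every $p$, hence is tight; it is also uniformly equicontinuous — indeed uniformly H\"older-$\tfrac12$, since each $T_p=\sum_i c_{p,i}U_i$ with $c_{p,i}\ge0$, $\sum_i c_{p,i}^2=\tfrac12$, and such laws inherit the modulus $\P[|T_p-x|\le\delta]\le C\sqrt\delta$ from the worst-case bound $\P[\chi^2_1\le t]\le\sqrt{2t/\pi}$; moreover condition (b) of \autoref{lemma_gaussian} forces $\{T_p-m_p\}$ to be relatively compact, so Assumption \ref{assumption1} applies to it. Given tightness plus uniform equicontinuity, a uniform Polya argument — fix $M$ and a finite grid $-M=x_0<\dots<x_\ell=M$ on which the $G_p$-increments are $<\epsilon$ for all $p$, apply item~(vi) of \autoref{portmanteau} on that finite grid, and sandwich an arbitrary $x$ by monotonicity — yields $\sup_p\sup_x|\,\P[\tilde S_{n,p}-m_p\le x]-\P[T_p-m_p\le x]\,|\to0$. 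Evaluating at $x=q_{p,\alpha}-m_p$, which is bounded in $p$ as the $(1-\alpha)$-quantile of the tight family $\{T_p-m_p\}$, gives that the size converges to $\alpha$ uniformly-over-$p$.

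For part (b), first $\E[\bar{\bR}_{p,k}]=n^{-1}\sum_l n_l\,\E[\bh_p(\bX_{p,k},\bX_{p,l})]\to\sum_l\lambda_l\E[\bh_p(\bX_{p,k},\bX_{p,l})]$, where $\bX_{p,k}$ has the group-$k$ law, and the stated nondegeneracy hypothesis on $(\nu_{p,0},\bh_p)$ propagates — by a sign/monotonicity argument on the contrasts $\E[\bh_p(\ba_{p,k}+\bX_p,\ba_{p,l}+\bY_p)]$ — to the conclusion that this limit is $\neq\mathbf 0_p$ for every $k$ when $\text{H}_0$ fails. Writing $\bar{\bR}_{p,k}=\E[\bar{\bR}_{p,k}]+\bar{\bR}^{0}_{p,k}$ and using $\|u+v\|^2\ge\tfrac12\|v\|^2-\|u\|^2$,
\begin{align*}
S_{n,p}\ \ge\ \tfrac12\,\mu_{n,p}\ -\ V_{n,p},\qquad \mu_{n,p}:=\textstyle\sum_k n_k\|\E[\bar{\bR}_{p,k}]\|^2,\quad V_{n,p}:=\textstyle\sum_k n_k\|\bar{\bR}^{0}_{p,k}\|^2,
\end{align*}
where $\mu_{n,p}\sim n\sum_k\lambda_k\|\sum_l\lambda_l\E[\bh_p(\bX_{p,k},\bX_{p,l})]\|^2$ is deterministic and $V_{n,p}\ge0$ with $\E[V_{n,p}]=\sum_k n_k\,\mathrm{tr}\,\Var(\bar{\bR}_{p,k})$ of the order of the relevant trace — equivalently, applying \autoref{thm:1}/\autoref{lemma_gaussian} under the alternative, $V_{n,p}$ normalized by $(2\sum_i\gamma^2_{p,i})^{1/2}$ is tight uniformly over $p$. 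Since the rejection region on the $S_{n,p}$-scale is $\{S_{n,p}>(2\sum_i\gamma_{p,i}^2)^{1/2}q_{p,\alpha}\}$ with $\sup_p(q_{p,\alpha}-m_p)<\infty$ by part (a), a Markov/Chebyshev bound on $V_{n,p}$ shows $\P[\text{reject}]\to1$ uniformly-over-$p$, provided the standardized signal $\mu_{n,p}/(2\sum_i\gamma_{p,i}^2)^{1/2}$ diverges uniformly over $p$; this is the point at which the remaining hypotheses (a fixed alternative sequence, or any standing condition keeping that ratio bounded away from $0$ over $p$) are invoked.

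The main obstacle is the upgrade in part (a): converting uniform-over-$p$ weak convergence into convergence of distribution functions that is uniform jointly in $p$ and in the argument, so that the $p$-dependent critical value $q_{p,\alpha}$ is legitimate. This rests on showing that the centered limiting family $\{T_p-m_p\}$ is tight and uniformly equicontinuous (hence satisfies Assumption \ref{assumption1}), which in turn uses condition (b) of \autoref{lemma_gaussian} and the explicit weighted-$\chi^2$ structure of the limit; in part (b) the corresponding difficulty is controlling $V_{n,p}$ uniformly over $p$ while ensuring the signal $\mu_{n,p}$ dominates uniformly.
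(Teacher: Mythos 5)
Your proposal diverges substantially from the paper's own proof, which is almost entirely tacit: for part (a) the paper writes one sentence deferring to \autoref{thm:1}, and for part (b) it asserts that $\left\| \E\left[ \sqrt{n_k} \bar{\bR}_{p,k} \right] \right\| \to \infty$ uniformly-over-$p$ and therefore $S_{n,p}\stackrel{\P}{\to}\infty$. You have correctly put your finger on the step that the paper elides in (a): item~\ref{p6} of \autoref{portmanteau} only controls $\sup_p |G_{n,p}(x)-G_p(x)|$ at a \emph{fixed} common continuity point $x$, whereas the size is evaluated at a $p$-dependent quantile $q_{p,\alpha}$, so an upgrade to $\sup_p\sup_x$ is genuinely needed. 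Your Polya-style scheme (tightness $+$ a uniform modulus of continuity $+$ a grid) is a sensible route, and is entirely absent from the paper's proof, so on (a) you are doing more than the paper does.

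Two caveats on the substance of your (a). First, the uniform H\"older-$\tfrac12$ bound $\P[|T_p-x|\le\delta]\le C\sqrt\delta$ does not follow directly from $\P[\chi^2_1\le t]\le\sqrt{2t/\pi}$: you must condition on the other coordinates to move to arbitrary shift $x$, and the resulting bound scales like $\sqrt{\delta/c_{p,1}}$ where $c_{p,1}$ is the largest weight, which condition (b) allows to tend to $0$ (if all $\rho_k=0$); in that regime the sum concentrates and the bound must instead come from a bounded-density / normal-approximation argument. Second, the claim that condition (b) forces relative compactness of $\{T_p-m_p\}$ in \emph{total variation} (Assumption \ref{assumption1}) needs its own argument — weak relative compactness is clear, but total variation would require, e.g., a Scheff\'e step or a two-weight convolution bound showing the densities converge.

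On (b), your decomposition $S_{n,p}\ge\tfrac12\mu_{n,p}-V_{n,p}$ is essentially the paper's argument rewritten with explicit noise control, and — importantly — you are explicit where the paper is silent: the hypothesis guarantees $\sum_l\lambda_l \E[\bh_p(\cdot,\cdot)]\ne\mathbf{0}_p$ at each $p$, but that does not make $\mu_{n,p}/(2\sum_i\gamma_{p,i}^2)^{1/2}$ diverge \emph{uniformly} over $p$; the paper's assertion that $\|\E[\sqrt{n_k}\bar{\bR}_{p,k}]\|\to\infty$ uniformly-over-$p$ quietly assumes the signal is bounded below relative to the noise scale. So you have reproduced the paper's (b) with an honest flag of the missing standing assumption, and added genuine content in (a). What remains to make the proposal airtight is the uniform modulus / Assumption \ref{assumption1} check for the limit family and the explicit signal-strength condition in (b).
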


The condition stated in the above theorem for the uniform-over-$p$ asymptotic consistency is satisfied when $\mathbf{h}_p( \bx_p, \by_p ) = \bx_p - \by_p$ and the expectation of $\nu_{p, 0}$ is $\mathbf{0}_p$ for all $p$. When $\mathbf{h}_p( \bx_p, \by_p ) = (\bx_p - \by_p) / \| \bx_p - \by_p \|$, the condition is satisfied if the support of $ \nu_{p,0} $ is not contained in a lower-dimensional subspace (including an affine subspace) of $\mathbb{R}^p$ (see the proof of Theorem 2.6 in \cite{chowdhury2022multi}).

We implement the kernel-based test for the two aforementioned kernels. For the first choice of the kernel, for which the test coincides with that proposed in \cite{zhang2020simple}, we follow the implementation procedure described in \cite{zhang2020simple}. For the spatial sign kernel, we follow the implementation procedure described in section 2 of \cite{chowdhury2022multi}.

In the next section, we compare the performances of the two tests, which we obtain as particular cases of the general kernel-based test, with other popular tests in high-dimensional and usual multivariate data.

\section{Comparison of performance}\label{sec:data_analysis}
In this section, we demonstrate and compare the performances of the test by \cite{zhang2020simple} and our proposed SS test with several tests available in the literature using simulated and real data. This section is divided into three subsections. In the first subsection, we compare the performances of the test by \cite{zhang2020simple} and our proposed SS test with several tests in the literature for testing equality of locations in two samples of high-dimensional data, when the dimension of the sample is either similar or larger than the sample size. In the second subsection, we investigate and compare the performances of the aforementioned two tests with the Hotelling's $T^2$ test and the test by \cite{choi1997approach} for equality of locations in two samples when the dimension of the data is very small compared to the sample size. In the third subsection, we demonstrate and compare the performances of the tests in a real dataset.

Although our proposed SS test is applicable for testing equality of locations when the number of groups is larger than two, we have focused here on the comparison of performances of two-sample tests. In \autoref{subsec:simulation_highdim}, we consider several simulation models where the dimension $p$ is either close to or larger than the sample sizes. There, we estimate the powers of the test by \cite{zhang2020simple} and the SS test along with several other popular two-sample tests in the literature for high-dimensional data. We consider both Gaussian and non-Gaussian distributions and investigate the sizes and powers of the tests in those setups.

Recall that the test by \cite{zhang2020simple} and the SS test do not normalize the test statistic with any estimated sample covariance matrix. In fact, the test statistic by \cite{zhang2020simple} is similar to the popular Hotelling's $T^2$ two-sample test statistic except for the normalization by the sample covariance matrix. Similarly, the SS test does not employ normalization by a sample covariance matrix unlike the test proposed in \cite{choi1997approach} as a generalization of the Kruskal-Wallis test for multivariate data. Testing procedures proposed for multivariate data usually employ normalization by the sample covariance, because it makes the test statistics scale invariant. On the other hand, a test, which is to be applied for high-dimensional data, cannot employ similar normalization because a sample covariance matrix would not be invertible when the dimension is larger than the sample size. However, it is of interest to investigate whether there is any advantages in terms of statistical power of multivariate tests which are such normalized against the two un-normalized procedures we have proposed here. Thus, in \autoref{subsec:simulation_scaling}, we compare the performances of the test by \cite{zhang2020simple}, our proposed SS test with the Hotelling's $T^2$ test and the test by \cite{choi1997approach} in several simulation models, where the dimension of the sample is very small compared to the sample sizes, and the underlying distributions are Gaussian or non-Gaussian.

In the previous section, we have studied the test by \cite{zhang2020simple} and our proposed SS test as procedures to apply for any two-sample testing situation irrespective of the dimension of the data. So, it is imperative to investigate their performances in both high-dimensional and lower dimensional multivariate settings, which we do in \autoref{subsec:simulation_highdim} and \autoref{subsec:simulation_scaling}. As mentioned before, we call the test by \cite{zhang2020simple} as ZGZC2020 test.

\subsection{Comparison with high-dimensional tests}\label{subsec:simulation_highdim}
Here, we compare the performance of several two-sample tests available in the literature for high-dimensional data with the ZGZC2020 test and the SS test via several simulation models. The sample sizes for the two samples are taken as $n_1 = 40$ and $n_2 = 50$, while the values of the dimension $p$ considered are 30, 50 and 100. When $p$ is 30 or 50, the sample sizes $n_1$ and $n_2$ are similar to the dimension $p$; larger than the dimension for $p = 30$ and smaller or equal to the dimension when $p = 50$. However, for $p = 100$, $n_1$ and $n_2$ are substantially smaller than the dimension. The lowest value of $p$ considered here is not too small compared to the sample sizes, the sample covariance matrix would be invertible, but high-dimensional methods can be applied as well. For the two larger values of $p$, the sample covariance matrix cannot be inverted but high-dimensional methods can be applied justifiably. For the underlying distributions of the samples, we consider three simulation models as described below. One of them is Gaussian, one is non-Gaussian but with finite variance and the third one is the multivariate Cauchy distribution, a heavy tailed non-Gaussian distribution with non-existent expectation.

Let $\bSigma_p = (( \sigma_{i j} ))$ with $\sigma_{i j} = 0.5 + 0.5 I(i = j)$. Recall the descriptions of the probability measures $\nu_{p, k}$ and $\nu_{p, 0}$ from \autoref{sec:multisample}. We consider three choices of $\nu_{p, 0}$:
\begin{itemize}
    \item \textit{Model 1}: $\nu_{p, 0}$ is $G_p( \mathbf{0}_p, \bSigma_p )$, the $p$-dimensional zero-mean Gaussian distribution with covariance $\bSigma_p$,

    \item \textit{Model 2}: $\nu_{p, 0}$ is $t_{p, 4}( \mathbf{0}_p, \bSigma_p )$, the $p$-dimensional $t$ distribution with degree of freedom 4, location vector $\mathbf{0}_p$ and scale matrix $\bSigma_p$,

    \item \textit{Model 3}: $\nu_{p, 0}$ is $t_{p, 1}( \mathbf{0}_p, \bSigma_p )$, the $p$-dimensional Cauchy distribution with location vector $\mathbf{0}_p$ and scale matrix $\bSigma_p$.
\end{itemize}
Let $\bu_p = (1, 2, \ldots, p)^\top$ and $\bh_p = \bu_p / \| \bu_p \|$.
For each of the three models, we choose the shifts $\ba_{p, 1} = \mathbf{0}_p$ and $\ba_{p, 2} = \delta \bh_p$, where $\delta$ is a real number.

Both Model 2 and Model 3 are comprised of non-Gaussian distributions with heavy tails. Model 2 has up to third order moments while no moment exists for the distribution in Model~3.

When $\delta = 0$, the null hypothesis \eqref{h_0} is satisfied, whereas the disparity between the two populations increases as $\delta$ is increased. So, we vary $\delta$ on a grid of values from 0 to some positive numbers and estimate the powers of the tests for each value of $\delta$, and thus generate estimated power curves of the tests over the corresponding ranges of $\delta$. We consider different values of $p$ and fix the sizes of the two groups to focus on how the powers of the tests change with $p$.

The distributions in Models 1 and 2 are either identical or very similar to the distributions considered in Models 1 and 2 in subsection 3.1 in \cite{zhang2020simple}. The expression of the shifts are also the same, however, the values of $\delta$ considered and the values of the dimension $p$ and the sample sizes $n_1$ and $n_2$ are different. We choose these models because of their simplicity and convenience of comparison of the performances of the ZGZC2020 test by \cite{zhang2020simple} and our proposed SS test, along with several other tests from the literature. We consider Model 3 in addition to investigate the performances of the ZGZC2020 test and the other tests from the literature under significant departure from Gaussianity, so much so that the moments required for the theoretical validity of those tests are non-existent, while the proposed SS test does not require the existence of population moments.

From the literature of two-sample tests for high-dimensional data, we consider the tests by \cite{bai1996effect}, \cite{cai2014two}, \cite{chen2010two}, \cite{chen2014two} and \cite{srivastava2008test}, and denote them as BS1996 (\cite{bai1996effect}), CLX2014 (\cite{cai2014two}), CQ2010 (\cite{chen2010two}), CLZ2014 (\cite{chen2014two,chen2019two}) and SD2008 (\cite{srivastava2008test}) tests. All these tests are implemented from the `highmean' package in R (\cite{highmean}).

For the combinations of the three models and the three values of $p$, we first estimate the sizes of all the tests based on 1000 independent replications at the nominal level of 5\%. The estimated sizes are reported in \autoref{tab:size1}.

\begin{table}[h]
\centering
\caption{Estimated sizes at nominal level 5\% for the different tests for high-dimensional data based on 1000 independent replications for $n_1 = 40$ and $n_2 = 50$.}
\label{tab:size1}
\begin{tabular}{ccccccccccc}
\hline
Model   &$p$   &ZGZC2020   &SS   &BS1996   &CLX2014   &CQ2010   &CLZ2014   &SD2008\\\hline 
 1   &30   &0.049   &0.045   &0.069   &0.041   &0.070   &0.128   &0.031\\ 
 1   &50   &0.046   &0.044   &0.063   &0.029   &0.065   &0.155   &0.026\\ 
 1   &100   &0.054   &0.050   &0.072   &0.040   &0.070   &0.195   &0.019\\ 
 2   &30   &0.056   &0.049   &0.074   &0.032   &0.073   &0.160   &0.026\\ 
 2   &50   &0.063   &0.059   &0.074   &0.034   &0.073   &0.156   &0.032\\ 
 2   &100   &0.051   &0.051   &0.061   &0.023   &0.062   &0.185   &0.020\\ 
 3   &30   &0.010   &0.047   &0.017   &0.001   &0.017   &0.116   &0.006\\ 
 3   &50   &0.015   &0.057   &0.025   &0.000   &0.021   &0.154   &0.002\\ 
 3   &100   &0.016   &0.050   &0.017   &0.000   &0.017   &0.214   &0.000\\\hline
\end{tabular}
\end{table}

It can be observed in \autoref{tab:size1} that for all the models and values of $p$, the estimated sizes of the SS test are close to the nominal level of 5\%. The estimated sizes of the ZGZC2020 test are close to the nominal level in Models 1 and 2, but significantly smaller than the nominal level for the non-Gaussian distribution in Model 3. The estimated sizes of the BS1996 test and the CQ2010 test are usually slightly larger than the nominal level of 5\% in the majority of cases in Models 1 and 2, but quite small compared to the nominal level in Model 3. The estimated sizes of the CLX2014 test are usually not far from the nominal level except in Model 3, where it is almost 0. The estimated sizes of the SD2008 test are somewhat smaller than the nominal level throughout Models 1 and 2, and become  close to zero in Model 3. The estimated sizes of the CLZ2014 test are always very higher than the nominal level irrespective of the models and values of $p$, its lowest estimated size being almost 12\%.

\begin{figure}
	\centering
	\includegraphics[width=1\textwidth]{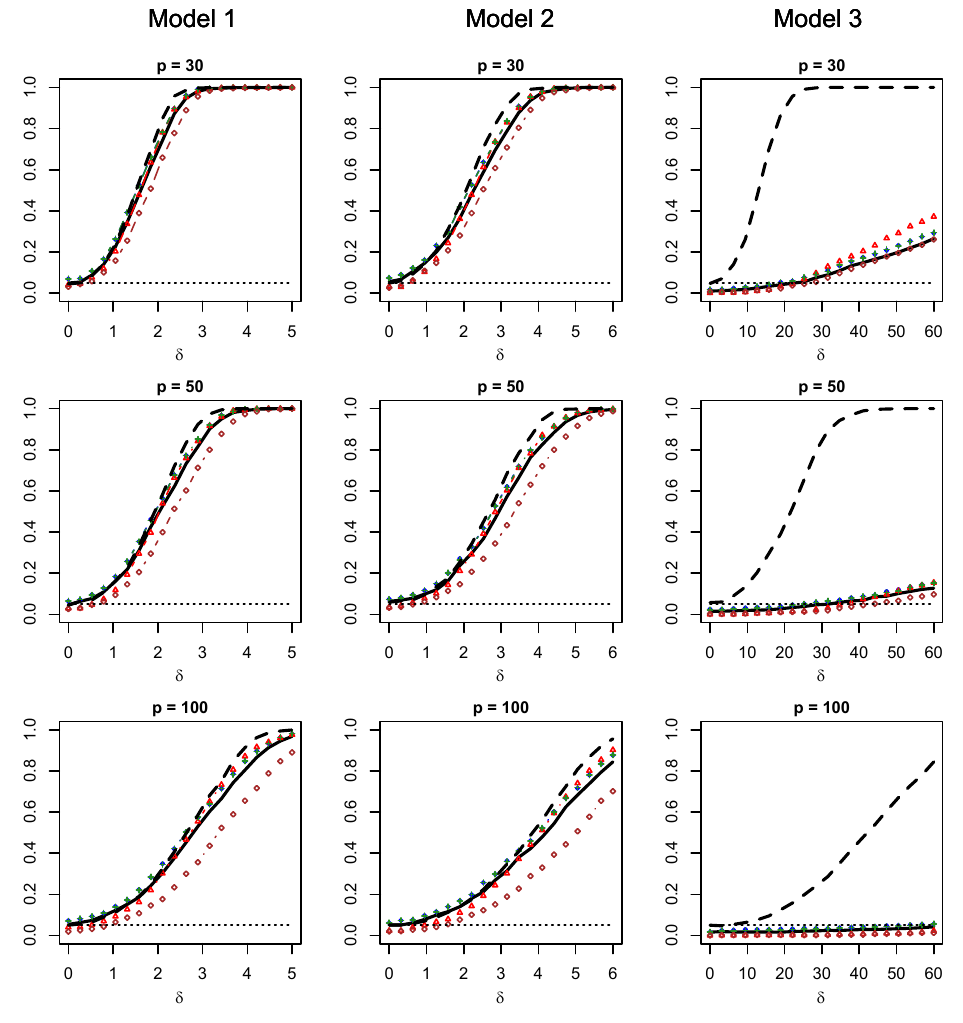}
	\caption{Estimated powers at nominal level 5\% (`$\cdots$') for the ZGZC2020 test (`$\boldsymbol{\leftrightline}$'), the SS test (`$\boldsymbol{-}\boldsymbol{-}$'), the BS1996 test (`\textcolor{blue}{$-\circ-$}'), the CLX2014 test (`\textcolor{red}{$-\smalltriangleup-$}'), the CQ2010 test (`\textcolor{ForestGreen}{$-+-$}') and the SD2008 test (`\textcolor{brown}{$-\smalldiamond-$}') based on 1000 independent replications for $n_1 = 40$ and $n_2 = 50$.}
	\label{fig:fig1}
\end{figure}

The above observations indicate that the SS test has satisfactory sizes irrespective of Gaussianity or non-Gaussianity or values of $p$. The ZGZC2020 test has satisfactory sizes in the Gaussian distribution or the non-Gaussian distribution when the moment assumptions of its validity are satisfied, but it fails to have proper size in case the sample arises from a non-Gaussian distribution not satisfying the moment conditions of its theoretical validity. The estimated sizes of the other tests are generally not so close to the nominal level compared to those of the ZGZC2020 test and the SS test.

Next, we proceed to demonstrate the estimated  powers of the tests. Since the CLZ2014 test is so improperly sized, we drop it from the power comparison. As mentioned before, by varying the value of $\delta$ within a range from 0 to a positive number, we generate estimated power curves of the tests. For the nine total combinations of the three models and three values of $p$, we get in total nine plots, which we present together in \autoref{fig:fig1}. Each column in \autoref{fig:fig1} corresponds to one of the models, which is mentioned at the top of the figure, and the values of $p$ for each of the plot are mentioned in the headings of the individual plots. Within each plot, the estimated power curves of the tests are plotted against the values of $\delta$.

From the plots in \autoref{fig:fig1}, it can be observed that the estimated power curve of the SS test is almost uniformly higher than all the other tests except the BS1996 test and the CQ2010 test for small to moderate values of $\delta$, irrespective of Gaussian or non-Gaussian distributions, although for the Gaussian distribution, the difference between the estimated powers of the SS test and the best performing tests among the rest is quite narrow. For small to moderate values of $\delta$ too, the estimated powers of the SS test is very close to those of the BS1996 test and the CQ2010 test. Also, similar to what was observed in \autoref{tab:size1}, for $\delta = 0$, which corresponds to the null hypothesis being true, the estimated powers of the BS1996 test and the CQ2010 test are  slightly but noticeably higher than the nominal level of 5\%. This perhaps contributes to the head-start of their power curves against the SS test over $\delta$, which vanishes gradually as $\delta$ becomes larger. The difference between the power curves of the SS test and the other tests increases in the non-Gaussian distribution in Model 2. However, in Model 3, the difference between the powers of the SS test and all other tests is quite stark. The ZGZC2020 test also performs well in Models 1 and 2, but not in Model 3. The powers of all the tests decrease as $p$ increases, and the difference between the power of the SS test and the other tests also increases with $p$.

The overall finding from the plots in \autoref{fig:fig1} indicates the utility of the SS test irrespective of the Gaussianity or non-Gaussianity of the underlying distributions. Whether the magnitude of $p$ is similar to the sample size or considerably larger, the SS test is observed to perform well and almost uniformly beating all the other tests if one accounts for the proper size of the tests at the nominal level. Moreover, for a non-Gaussian distribution where the moment conditions for the validity of other tests do not hold, the difference between the performances of the SS tests and all the other tests is profound.

\subsection{Comparison with multivariate tests}\label{subsec:simulation_scaling}
Recall that the ZGZC2020 test and the SS test do not normalize the test statistics with the respective inverses of the corresponding sample covariance matrices. Tests for multivariate data almost always involve normalizing with some sample covariance matrices, which makes the tests scale invariant, and as a further consequence the null distributions of those tests are usually free of any population parameters except the dimension, and the sample size for non-asymptotic tests. Evidently, the ZGZC2020 test and the SS test are not scale invariant. However, it is of interest to investigate whether this affects their performance compared to their normalized counterparts in multivariate data. This would be further of interest because to propose these tests to be employed irrespective of the dimension of the sample, for both small values of $p$ and large values of $p$, it would be required of them to have satisfactory performance for small values of $p$ too. We investigated their performance for large $p$ in the previous subsection, and here we compare their performances against the Hotelling's $T^2$ test and the test by \cite{choi1997approach} for small $p$. We denote the Hotelling's $T^2$ test as the HT2 test and the test by \cite{choi1997approach} as the CM1997 test.

Here again, we consider the same three cases of $\nu_{p, 0}$ as in \autoref{subsec:simulation_highdim} and like before, we take $n_1 = 40$ and $n_2 = 50$. However, we now fix $p = 2$. So, here, in Model 1, $\nu_{p, 0}$ is a bivariate Gaussian distribution with zero mean and covariance matrix $\bSigma_2 = (( \sigma_{i j} ))$ given by $\sigma_{i j} = 0.5 + 0.5 I(i = j)$. In Model 2 and Model 3, $\nu_{p, 0}$ is the corresponding bivariate $t_4$ distribution and the bivariate Cauchy distribution, respectively. However, we change the value of the vector $\bh_p \equiv \bh_2$ and consider two cases as given below:
\begin{itemize}
    \item \textit{Case 1}: $\bh_2 = (1, 1)^\top$,
    \item \textit{Case 2}: $\bh_2 = (0, 1)^\top$.
\end{itemize}
For each of these two cases, we choose the shifts $\ba_{2, 1} = \mathbf{0}_2$ and $\ba_{2, 2} = \delta \bh_2$, where $\delta$ is a real number, like in \autoref{subsec:simulation_highdim}. The value $\delta = 0$ corresponds to the null hypothesis \eqref{h_0} being true. Like before, we take the values of $\delta$ on a grid between 0 and some suitable positive numbers to generate estimated power curves of the tests.

\begin{table}[h]
\centering
\caption{Estimated sizes at nominal level 5\% for the different tests for multivariate data based on 1000 independent replications for $n_1 = 40$ and $n_2 = 50$.}
\label{tab:size2}
\begin{tabular}{ccccc}
\hline
Model   &ZGZC2020   &HT2   &SS   &CM1997\\\hline 
 1   &0.060   &0.049   &0.057   &0.049\\ 
 2   &0.054   &0.050   &0.060   &0.049\\ 
 3   &0.018   &0.012   &0.054   &0.048\\\hline
\end{tabular}
\end{table}

In \autoref{tab:size2}, the estimated sizes of the ZGZC2020, HT2, SS and CM1997 tests are presented for all three models. It should be noted that here the HT2 test is an exact non-asymptotic test, while the other three tests are asymptotic tests. It can be seen from this table that the estimated sizes for all the tests are close to the nominal level in Models 1 and 2. However, in Model 3, the estimated sizes of the ZGZC2020 test and the HT2 test are considerably different from the nominal level, yet the estimated sizes of the SS test and the CM1997 test are again close to the nominal level. This indicates that significant departure from Gaussianity similarly affects the ZGZC2020 test and the HT2 test, while the SS test and the CM1997 test remain comparably unaffected.

Next, we present the estimated power curves in \autoref{fig:fig2}. Here, six plots are presented accounting for the six combinations of the two cases of the shifts and the three models. Each column of plots in \autoref{fig:fig2} corresponds to one of the models, mentioned at the top of the respective columns, while each row corresponds to one of the cases as depicted on the left sides of the corresponding columns.

\begin{figure}[h]
	\centering
	\includegraphics[width=1\textwidth]{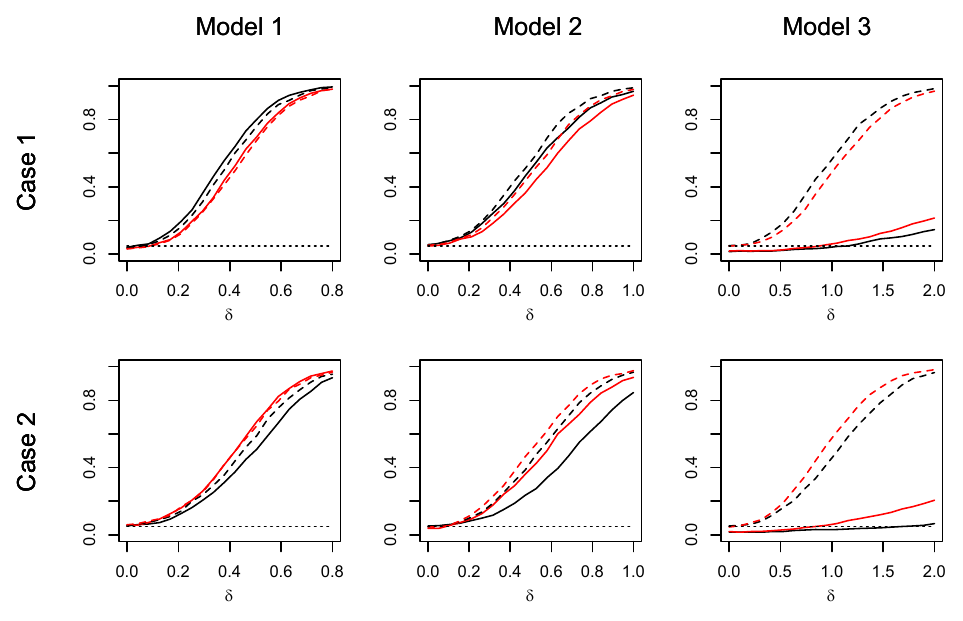}
	\caption{Estimated powers at nominal level 5\% (`$\cdots$') for the ZGZC2020 test (`$\leftrightline$'), the HT2 test (`\textcolor{red}{$\leftrightline$}'), the SS test (`$--$') and the CM1997 test (`\textcolor{red}{$--$}') based on 1000 independent replications for $n_1 = 40$ and $n_2 = 50$.}
	\label{fig:fig2}
\end{figure}
In \autoref{fig:fig2}, it can be observed that for Case 1, the un-normalized tests ZGZC2020 and SS have higher estimated power than their normalized counterparts, namely, the HT2 test and the CM1997 test, uniformly over the range of $\delta$. In Model 1 in Case 1, although the underlying distribution is Gaussian, both the ZGZC2020 test and the SS test have higher estimated power than the HT2 test. This is in spite of the fact that the HT2 test is an exact non-asymptotic test for Gaussian distributions and it is the most powerful invariant test. In Case 2, however, all the normalized tests have uniformly higher estimated power compared to the un-normalized counterparts. In both Case 1 and Case 2, the estimated power curve of the SS test is either higher or coincides with that of the ZGZC2020 test except in Model 1 in Case 1, where the ZGZC2020 test has narrowly higher power than the SS test.

\subsection{Analysis of real data}\label{subsec:realdata}
We analyze the colon data available at \url{http://genomics-pubs.princeton.edu/oncology/affydata/index.html}. This dataset was first analyzed in \cite{alon1999broad} and later in many other works including \cite{zhang2020simple}. It contains the expression values of 2000 genes from 62 samples of human colon tissue, 22 are from normal tissue and 40 are from tumor tissue. We are interested in testing whether the gene expression values differ between normal and tumor tissue. So, we have $p = 2000$, and $n_1 = 40, n_2 = 22$. We consider the ZGZC2020 test, the SS test, the BS1996 test, the CLX2014 test, the CQ2010 test and the SD2008 test. The p-values obtained are presented in the first row of \autoref{tab:pvalues}. It can be observed that several p-values are quite small, but even among them, the p-value for the SS test is the closest to zero.

\begin{table}[h]
\centering
\caption{First row: p-values in the colon data with $p = 2000$.\\
Second row: Average of p-values in the colon data with $p = 40$, p-values are averages over 50 blocks.}
\label{tab:pvalues}
\begin{tabular}{cccccc}
\hline
ZGZC2020           &SS      &BS1996      &CLX2014     &CQ2010      &SD2008 \\\hline
6.259680e-04 &0.000000e+00 &4.002116e-07 &9.264269e-07 &1.273052e-07 &2.702844e-01 \\\hline
0.03967578 &0.01588000 &0.03919005 &0.08038538 &0.03124023 &0.15816933 \\\hline
\end{tabular}
\end{table}

\begin{figure}[h]
	\centering
	\includegraphics[width=1\textwidth]{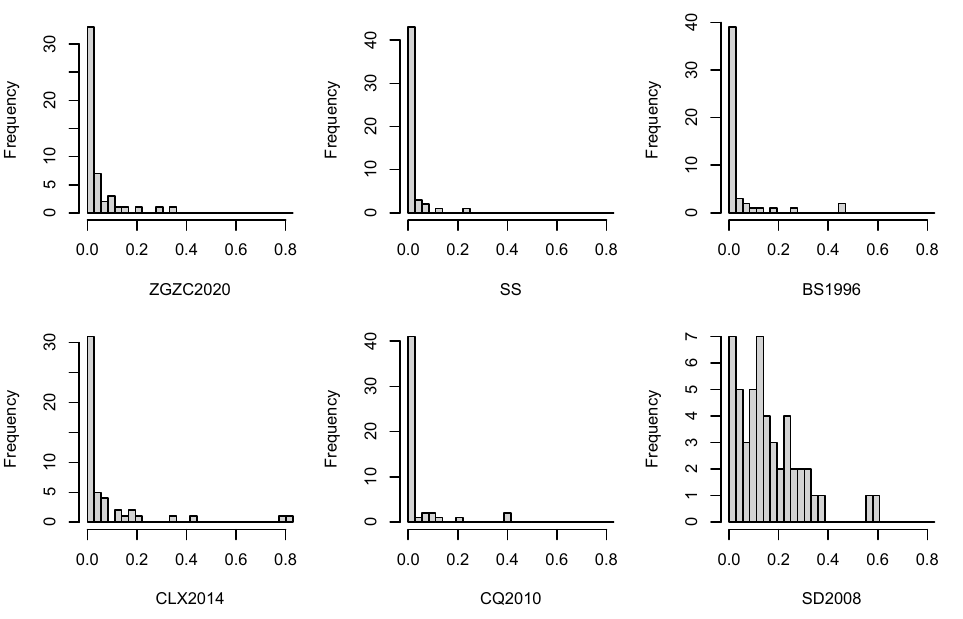}
	\caption{Histograms of p-values of the tests in the colon data.}
    \label{fig:fig3}
\end{figure}

Next, we are interested in observing the p-values of the tests when the dimension $p$ is closer in magnitude to either of the sample sizes $n_1$ and $n_2$. To implement this, we partition the data matrix by taking blocks of 40 consecutive columns. This way, we have 50 data matrices each with $p = 40$, $n_1 = 40$ and $n_2 = 22$. So, here the magnitude of $p$ is comparable to the sample size.
We present the histogram of the 50 p-values for each of the tests in \autoref{fig:fig3}. The averages of the p-values are presented in the second row of \autoref{tab:pvalues}. Although the p-value of the SS test was the smallest in the first row of \autoref{tab:pvalues}, most of the other p-values were also nearly zero, and it was difficult to conclude that one of the tests has higher statistical power compared to the other. However, as the value of $p$ becomes lower and comparable to the magnitude of $n_1$ and $n_2$ in the study presented in the second row of \autoref{tab:pvalues}, the p-values of all the tests increase. In that case, from the second row of \autoref{tab:pvalues}, it can be seen that the SS test has the lowest average p-value. In \autoref{fig:fig3} too, it can be observed that the p-values of the SS test are concentrated closest to zero. This indicates that the SS test has indeed higher statistical power compared to the other tests carried out on this data.

\section{Concluding remarks}\label{sec:conclusion}
In this work, we have proposed a new notion of convergence of fixed-dimensional functions of high-dimensional random vectors, which is uniform over the dimension of the aforementioned random vectors. This notion of convergence can be employed to derive asymptotic null distributions of tests, which are applicable for both low-dimensional and high-dimensional data. We have utilized this notion of convergence to derive the asymptotic null distribution of a general class of kernel-based multi-sample tests of equality of locations. We have then considered two specific choices of the kernel, one of which yielded an existing test in the literature (denoted as the ZGZC2020 test above, proposed by \cite{zhang2020simple}) and the other one gave a spatial sign-based test (denoted as the SS test). The notion of uniform-over-dimension convergence to the asymptotic null distribution is expected to yield tests which perform as good for high-dimensional data as for low-dimensional data, and we have demonstrated this fact using simulated and real data. We have shown that the SS test in particular outperforms several popular tests in the literature for high-dimensional data in both Gaussian and non-Gaussian models. We have also provided examples of low-dimensional non-Gaussian and even Gaussian models, where the SS test outperforms even the Hotelling's $T^2$ test.

The applicability of the notion of uniform-over-dimension convergence on some of the common results in the usual convergence in distribution and probability is demonstrated in this work. However, many other useful results remain to be explored, e.g., the Lindeberg central limit theorem and U-statistic asymptotic results, which we have not considered in this work and intend to study subsequently. It is of interest too to study other asymptotic tests for multivariate and high-dimensional data, whether they can be implemented uniformly-over-dimension, so that they yield good performance irrespective of the dimension of the data.


\begin{appendix}
\section*{Proofs of mathematical results}
\label{appendix}

\begin{proof}[Proof of \autoref{lemma1}]
For every $n$, define $g_n, h_n : \mathbb{R}^d \to \mathbb{R}$ by
\begin{align*}
& g_n( \bx ) = \inf\{ I_A( \by ) + n \| \by - \bx \| : \by \in \mathbb{R}^d \} , \\
& h_n( \bx ) = \sup\{ I_A( \by ) - n \| \by - \bx \| : \by \in \mathbb{R}^d \} .
\end{align*}
For every $\bx$ and every $n$, we have
\begin{align*}
g_n(\bx) & \le g_{n+1}(\bx) \le I_A(\bx) \le 1,\\
g_n(\bx) & \ge \inf\{ I_A( \by ) : \by \in \mathbb{R}^d \} + \inf\{ n \| \by - \bx \| : \by \in \mathbb{R}^d \} \\
& \ge \inf\{ I_A( \by ) : \by \in \mathbb{R}^d \} = 0 ,
\intertext{and}
h_n(\bx) & \ge h_{n+1}(\bx) \ge I_A(\bx) \ge 0,\\
h_n(\bx) & \le \sup\{ I_A( \by ) : \by \in \mathbb{R}^d \} + \sup\{ - n \| \by - \bx \| : \by \in \mathbb{R}^d \} \\
& \le \sup\{ I_A( \by ) : \by \in \mathbb{R}^d \} = 1 .
\end{align*}
So, $g_n$ and $h_n$ are bounded functions for every $n$ with
\begin{align}
0 \le g_n(\bx) \le I_A(\bx) \le h_n(\bx) \le 1 .
\label{lemma1_eq1}
\end{align}
Also, $g_n$ and $h_n$ are Lipschitz continuous functions, since
\begin{align*}
g_n( \bx ) & \le \inf\{ I_A( \by ) + n \| \by - \bz \| : \by \in \mathbb{R}^d \} + n \| \bz - \bx \| \\
& = g_n( \bz ) + n \| \bz - \bx \| ,\\
h_n( \bx ) & \ge \sup\{ I_A( \by ) - n \| \by - \bz \| : \by \in \mathbb{R}^d \} - n \| \bz - \bx \| \\
& = h_n( \bz ) - n \| \bz - \bx \| ,
\end{align*}
which implies
\begin{align}
& | g_n( \bx ) - g_n( \bz ) | \le n \| \bz - \bx \|
\text{ and }
| h_n( \bx ) - h_n( \bz ) | \le n \| \bz - \bx \| .
\label{lemma1_eq2}
\end{align}

Next, for a set $B \subset \mathbb{R}^d$ and a number $\delta > 0$, define $B^\delta = \{ \by \in \mathbb{R}^d : \inf_{\bx \in B} \| \by - \bx \| \le \delta \}$. We shall establish that
\begin{align}
\text{for any } \bx \text{ with } \bx \notin A^{1/n} \cap (A^c)^{1/n},\; g_n(\bx) = I_A(\bx) = h_n(\bx) .
\label{lemma1_eq3}
\end{align}
Note that $( A^{1/n} )^c \subset A^c$ and $( (A^c)^{1/n} )^c \subset A$.
For $\bx \in A^c$, from \eqref{lemma1_eq1} we have $0 \le g_n(\bx) \le I_A(\bx) = 0$, so $g_n(\bx) = I_A(\bx)$. Similarly, for $\bx \in A$, $h_n(\bx) = I_A(\bx)$.
Now, suppose if possible, for some $\bx \in ( (A^c)^{1/n} )^c$, $g_n(\bx) < I_A(\bx)$. This implies from the definition of $g_n(\bx)$ that there exists $\by$ with $I_A(\by) + n \| \by - \bx \| < I_A(\bx)$. Since $n \| \by - \bx \| \ge 0$, we must have $I_A(\by) = 0$ for the strict inequality to hold, which implies $n \| \by - \bx \| < I_A(\bx) = 1$ because $\bx \in ( (A^c)^{1/n} )^c \subset A$, and this yields $\| \by - \bx \| < 1/n$. Now $I_A(\by) = 0$ and $\| \by - \bx \| < 1/n$ together imply that $\bx \in (A^c)^{1/n}$, a contradiction. So, we must have $g_n(\bx) = I_A(\bx)$ for every $\bx \in ( (A^c)^{1/n} )^c$. Similarly, it follows that $h_n(\bx) = I_A(\bx)$ for every $\bx \in ( A^{1/n} )^c$. Therefore, we have
\begin{align}
\begin{split}
& g_n(\bx) = I_A(\bx) \text{ for } \bx \in ( (A^c)^{1/n} )^c \cup A^c ,\\
& h_n(\bx) = I_A(\bx) \text{ for } \bx \in A \cup ( A^{1/n} )^c .
\end{split}
\label{lemma1_eq4}
\end{align}
So from \eqref{lemma1_eq4}, we have $g_n(\bx) = I_A(\bx) = h_n(\bx)$ for every $\bx \in ( ( (A^c)^{1/n} )^c \cup A^c ) \cap ( A \cup ( A^{1/n} )^c ) = ( (A^c)^{1/n} )^c \cup ( A^{1/n} )^c = ( A^{1/n} \cap (A^c)^{1/n} )^c$, which establishes \eqref{lemma1_eq3}. Therefore, from \eqref{lemma1_eq3} we have, for every $n$ and any probability measure $\gamma$ on $( \mathbb{R}^d, \mathcal{R}^d )$,
\begin{align}
\int (h_n - g_n) \mathrm{d} \gamma < \gamma\big( A^{1/n} \cap (A^c)^{1/n} \big) .
\label{lemma1_eq5}
\end{align}

Next, we proceed to show that
\begin{align}
\sup_p \mu_p\big( A^{1/n} \cap (A^c)^{1/n} \big) \to 0 \text{ as } n \to \infty .
\label{lemma1_eq6}
\end{align}
Suppose, if possible, \eqref{lemma1_eq6} is not true, which means there exists $\epsilon > 0$ and a subsequence $\{n_k\}$ such that for every $k$,
\begin{align*}
\sup_p \mu_p\big( A^{1/n_k} \cap (A^c)^{1/n_k} \big) > \epsilon .
\end{align*}
Then, there is a sequence $\{\mu_{p_k}\}$ corresponding to $\{n_k\}$ such that for every $k$,
\begin{align}
\mu_{p_k}\big( A^{1/n_k} \cap (A^c)^{1/n_k} \big) > \epsilon .
\label{lemma1_eq7}
\end{align}
From assumption \ref{assumption1}, $\{\mu_{p_k}\}$ has a subsequence $\{\mu_{p_l}\}$ such that
\begin{align}
\sup_{C \in \mathcal{R}^d} \left| \mu_{p_{n_l}}( C ) - \nu( C ) \right| \to 0
\text{ as } l \to \infty
\label{lemma1_eq8}
\end{align}
for some probability measure $\nu$ on $( \mathbb{R}^d, \mathcal{R}^d )$, and so
\begin{align}
& \left| \mu_{p_{n_l}}\big( A^{1/n_l} \cap (A^c)^{1/n_l} \big) - \nu\big( A^{1/n_l} \cap (A^c)^{1/n_l} \big) \right| \nonumber\\
& \le \sup_{C \in \mathcal{R}^d} \left| \mu_{p_{n_l}}( C ) - \nu( C ) \right| \to 0
\text{ as } l \to \infty
\label{lemma1_eq9}
\end{align}
From \eqref{lemma1_eq7}, \eqref{lemma1_eq8} and \eqref{lemma1_eq9}, we have for all sufficiently large $l$,
\begin{align}
\nu\big( A^{1/n_l} \cap (A^c)^{1/n_l} \big) > \epsilon / 2 .
\label{lemma1_eq10}
\end{align}
Now, $\{ A^{1/n} \cap (A^c)^{1/n} \}$ is a decreasing sequence of sets with $A^{1/n} \cap (A^c)^{1/n} \to \partial A$ as $n \to \infty$, which implies
\begin{align}
\nu\big( A^{1/n_l} \cap (A^c)^{1/n_l} \big) \to \nu(\partial A) \text{ as } l \to \infty .
\label{lemma1_eq11}
\end{align}
From \eqref{lemma1_eq10} and \eqref{lemma1_eq11}, we have for all sufficiently large $l$,
\begin{align}
\nu(\partial A) > \epsilon / 4 .
\label{lemma1_eq12}
\end{align}
Now, from \eqref{lemma1_eq8} we have
\begin{align}
& \left| \mu_{p_{n_l}}(\partial A) - \nu(\partial A) \right| 
\le \sup_{C \in \mathcal{R}^d} \left| \mu_{p_{n_l}}( C ) - \nu( C ) \right| \to 0
\text{ as } l \to \infty ,
\label{lemma1_eq13}
\end{align}
and \eqref{lemma1_eq12} and \eqref{lemma1_eq13} together imply that for all sufficiently large $l$,
\begin{align*}
\mu_{p_{n_l}}(\partial A) > \epsilon / 8 .
\end{align*}
But this is a direct contradiction to the fact that $A$ is a $\mu_p$-continuity set for all $p$, which means $\mu_{p_{n_l}}(\partial A) = 0$ for every $l$.
This establishes the validity of \eqref{lemma1_eq6}.

Hence, from \eqref{lemma1_eq5} and \eqref{lemma1_eq6} we have
\begin{align*}
\sup_p \int (h_n - g_n) \mathrm{d} \mu_p < \sup_p \mu_p\big( A^{1/n} \cap (A^c)^{1/n} \big) \to 0 \text{ as } n \to \infty .
\end{align*}
Given any $\epsilon > 0$, choose $n$ such that $\sup_p \mu_p\big( A^{1/n} \cap (A^c)^{1/n} \big) < \epsilon$.
The corresponding $g_n$ and $h_n$ functions would satisfy the roles of the functions $g$ and $h$ in the statement of the lemma.
\end{proof}

\begin{proof}[Proof of \autoref{portmanteau}]
We shall prove in the following sequence: \ref{p1} $\implies$ \ref{p2} $\implies$ \ref{p3} $\implies$ \ref{p4} $\implies$ \ref{p5} $\implies$ \ref{p1} and \ref{p5} $\implies$ \ref{p6} $\implies$ \ref{p3}.

\ref{p1} $\implies$ \ref{p2} is obvious from \autoref{definition1}.

\ref{p2} $\implies$ \ref{p3}:
It is enough to prove that every continuous function $ f : \mathbb{R}^d \to \mathbb{R} $, which is zero outside of a compact set $K$, is bounded and uniformly continuous.

Because $f$ is continuous on $K$ and 0 outside $K$, it is bounded on $\mathbb{R}^d$ (Theorem 4.15 in \cite{rudin1976principles}). Next, suppose if possible, $f$ is not uniformly continuous on $\mathbb{R}^d$. Then, there is $\epsilon > 0$ and sequences $\{ \ba_n \}$ and $\{ \bb_n \}$ such that $\| \ba_n - \bb_n \| \to 0$ as $n \to \infty$ but $| f(\ba_n) - f(\bb_n) | > \epsilon$ for every $n$. Consider the sets $A = \{ \ba_n \} \cap K$ and $B = \{ \bb_n \} \cap K$. If $A$ has infinitely many elements, then there is a subsequence $\{\ba_{n_i}\}$ of $\{\ba_n\}$ and a point $\ba \in K$ with $\| \ba_{n_i} - \ba \| \to 0$ as $n_i \to \infty$ (Theorem 2.37 in \cite{rudin1976principles}), otherwise, there is $N_1$ such that for all $n \ge N_1$, $\ba_n \in K^c$ with $f(\ba_n) = 0$. Similarly, if $B$ has infinitely many elements, then there is a subsequence $\{\bb_{n_j}\}$ of $\{\bb_n\}$ and a point $\bb \in K$ with $\| \bb_{n_j} - \bb \| \to 0$ as $n_j \to \infty$, and otherwise, there is $N_2$ such that for all $n \ge N_2$, $\bb_n \in K^c$ with $f(\bb_n) = 0$.

Suppose $A$ has infinitely many elements. Then, $\| \ba - \bb_{n_i} \| \le \| \ba - \ba_{n_i} \| + \| \ba_{n_i} - \bb_{n_i} \| \to 0$ as $n_i \to \infty$, and $| f(\ba) - f(\ba_{n_i}) | \to 0$ as $n_i \to \infty$ from the continuity of $f$. However, this yields $| f(\ba) - f(\bb_{n_i}) | \ge | f(\ba_{n_i}) - f(\bb_{n_i}) | - | f(\ba) - f(\ba_{n_i}) | \ge \epsilon / 2$ for all sufficiently large $n$, which contradicts the continuity of $f$ at $\ba$. Now suppose $B$ has infinitely many elements. Using the same arguments as above, we again get a contradiction to the continuity of $f$ at $\bb$. Finally, suppose $A$ and $B$ have both finitely many elements, which means for all $n \ge \max\{ N_1, N_2 \}$, $\ba_n, \bb_n \in K^c$ with $f(\ba_n) = f(\bb_n) = 0$, contradicting the statement that $| f(\ba_n) - f(\bb_n) | > \epsilon$ for every $n$.
Therefore, $f$ must be uniformly continuous on $\mathbb{R}^d$.

\ref{p3} $\implies$ \ref{p4}:
Let $f : \mathbb{R}^d \to \mathbb{R}$ be a bounded and Lipschitz continuous function, and let $M = \sup\{ | f( \bx ) | : \bx \in \mathbb{R}^d \}$.
Given $\epsilon > 0$, under assumption \ref{assumption1}, there is a compact set $K$ such that \eqref{tightness} holds: $\mu_p(K) > 1 - (\epsilon / 2 M)$ for all $p$. Define $h : \mathbb{R}^d \to \mathbb{R}$ as
\begin{align*}
h( \bx ) = \sup\{ I_K( \by ) - \| \by - \bx \| : \by \in \mathbb{R}^d \} .
\end{align*}
Note that $h$ is the same function $h_n$ defined in the proof of \autoref{lemma1} with $A$ replaced by the compact set $K$ and $n = 1$. So, from \eqref{lemma1_eq1} and \eqref{lemma1_eq2} in the proof of \autoref{lemma1}, it follows that $0 \le h( \cdot ) \le 1$ and $h$ is a Lipschitz continuous function. Also, from \eqref{lemma1_eq4}, it follows that $h( \bx ) = 1$ for $\bx \in K$ and $h( \bx ) = 0$ for $\bx \notin K^1$, where $K^1 = \{ \by \in \mathbb{R}^d : \inf_{\bx \in K} \| \by - \bx \| \le 1 \}$. Note that $K \subset K^1$. Summarising, we have
\begin{align}
\begin{split}
& h \text{ is continuous},\\
& h( \bx ) =
\begin{cases}
1 & \text{for } \bx \in K ,\\
0 & \text{for } \bx \notin K^1 ,
\end{cases}\\
& \text{and } 0 \le h( \cdot ) \le 1 \text{ for } \bx \in K^1 \cap K^c .
\end{split}
\label{portmanteau_eq1}
\end{align}
So, from \eqref{portmanteau_eq1}, it follows that
\begin{align}
1 \ge \int h( \bx ) \mathrm{d} \mu_p( \bx ) \ge \mu_p(K) > 1 - (\epsilon / 2 M) \text{ for all } p .
\label{portmanteau_eq2}
\end{align}
Suppose $\by \in (K^1)^c$. Then, $\inf_{\bx \in K} \| \by - \bx \| > 1 + \delta$ for some $\delta > 0$. For $\bz$ with $\| \by - \bz \| < \delta$, $\| \bz - \bx \| \ge \| \by - \bx \| - \| \by - \bz \|$ for any $\bx$, which implies $\inf_{\bx \in K} \| \bz - \bx \| \ge \inf_{\bx \in K} \| \by - \bx \| - \| \by - \bz \| > 1$, and hence $\bz \in (K^1)^c$. Therefore, $(K^1)^c$ is an open set, which implies that $K^1$ is closed. Further, since $K$ is a compact set, it is bounded, and hence $K^1$ is bounded. So, $K^1$, being a closed and bounded subset of $\mathbb{R}^d$, is a compact set (Theorem 2.41 in \cite{rudin1976principles}).

Since $K^1$ is compact, \eqref{portmanteau_eq1} implies that $h(\cdot)$ and $f(\cdot) h(\cdot)$ are continuous functions which are 0 outside the compact set $K^1$, and so from \ref{p3} we have for all sufficiently large $n$,
\begin{align}
\begin{split}
& \sup_p \left| \int h( \bx ) \mathrm{d} \mu_{n, p}( \bx ) - \int h( \bx ) \mathrm{d} \mu_p( \bx ) \right| < \epsilon / M ,\\
& \sup_p \left| \int f( \bx ) h( \bx ) \mathrm{d} \mu_{n, p}( \bx ) - \int f( \bx ) h( \bx ) \mathrm{d} \mu_p( \bx ) \right| < \epsilon .
\end{split}
\label{portmanteau_eq3}
\end{align}
Note that
\begin{align}
\begin{split}
& \left| \int f( \bx ) \mathrm{d} \mu_{n, p}( \bx ) - \int f( \bx ) \mathrm{d} \mu_p( \bx ) \right| \\
& \le \left| \int f( \bx ) ( 1 - h( \bx ) ) \mathrm{d} \mu_{n, p}( \bx ) \right| \\
& \quad + \left| \int f( \bx ) h( \bx ) \mathrm{d} \mu_{n, p}( \bx ) - \int f( \bx ) h( \bx ) \mathrm{d} \mu_p( \bx ) \right| \\
& \quad + \left| \int f( \bx ) ( 1 - h( \bx ) ) \mathrm{d} \mu_p( \bx ) \right| .
\end{split}
\label{portmanteau_eq4}
\end{align}
Now,
\begin{align}
\left| \int f( \bx ) ( 1 - h( \bx ) ) \mathrm{d} \mu_{n, p}( \bx ) \right|
& \le M \int ( 1 - h( \bx ) ) \mathrm{d} \mu_{n, p}( \bx ) \nonumber\\
& = M \left( 1 - \int h( \bx ) \mathrm{d} \mu_{n, p}( \bx ) \right) ,
\label{portmanteau_eq5}
\end{align}
and similarly,
\begin{align}
\left| \int f( \bx ) ( 1 - h( \bx ) ) \mathrm{d} \mu_p( \bx ) \right|
& \le M \left( 1 - \int h( \bx ) \mathrm{d} \mu_p( \bx ) \right) .
\label{portmanteau_eq6}
\end{align}
So, \eqref{portmanteau_eq4}, \eqref{portmanteau_eq5} and \eqref{portmanteau_eq6} together yield
\begin{align}
& \left| \int f( \bx ) \mathrm{d} \mu_{n, p}( \bx ) - \int f( \bx ) \mathrm{d} \mu_p( \bx ) \right| \nonumber\\
& \le M \left( 1 - \int h( \bx ) \mathrm{d} \mu_{n, p}( \bx ) \right) 
+ M \left( 1 - \int h( \bx ) \mathrm{d} \mu_p( \bx ) \right) \nonumber\\
& \quad + \left| \int f( \bx ) h( \bx ) \mathrm{d} \mu_{n, p}( \bx ) - \int f( \bx ) h( \bx ) \mathrm{d} \mu_p( \bx ) \right| \nonumber\\
& = 2 M \left( 1 - \int h( \bx ) \mathrm{d} \mu_p( \bx ) \right) \nonumber\\
& \quad + M \left( \int h( \bx ) \mathrm{d} \mu_p( \bx ) - \int h( \bx ) \mathrm{d} \mu_{n, p}( \bx ) \right) \nonumber\\
& \quad + \left| \int f( \bx ) h( \bx ) \mathrm{d} \mu_{n, p}( \bx ) - \int f( \bx ) h( \bx ) \mathrm{d} \mu_p( \bx ) \right| \nonumber\\
& \le 2 M \left( 1 - \int h( \bx ) \mathrm{d} \mu_p( \bx ) \right) \nonumber\\
& \quad + M \sup_p \left| \int h( \bx ) \mathrm{d} \mu_{n, p}( \bx ) - \int h( \bx ) \mathrm{d} \mu_p( \bx ) \right| \nonumber\\
& \quad + \sup_p \left| \int f( \bx ) h( \bx ) \mathrm{d} \mu_{n, p}( \bx ) - \int f( \bx ) h( \bx ) \mathrm{d} \mu_p( \bx ) \right| .
\label{portmanteau_eq7}
\end{align}
Applying \eqref{portmanteau_eq2} and \eqref{portmanteau_eq3} in \eqref{portmanteau_eq7}, we get for all sufficiently large $n$,
\begin{align*}
& \left| \int f( \bx ) \mathrm{d} \mu_{n, p}( \bx ) - \int f( \bx ) \mathrm{d} \mu_p( \bx ) \right|
< 2 M \times (\epsilon / 2 M) + M \times (\epsilon / M) + \epsilon = 3 \epsilon \nonumber\\
\intertext{for all $p$, which implies}
& \sup_p \left| \int f( \bx ) \mathrm{d} \mu_{n, p}( \bx ) - \int f( \bx ) \mathrm{d} \mu_p( \bx ) \right| < 3 \epsilon \text{ for all sufficiently large } n .
\end{align*}
Since $\epsilon > 0$ is arbitrary, we get $\lim_{n \to \infty} \sup_p \left| \int f \mathrm{d} \mu_{n, p} - \int f \mathrm{d} \mu_p \right| = 0$.

\ref{p4} $\implies$ \ref{p5}:
Under assumption \ref{assumption1} and from \autoref{lemma1}, for a set $A$, which is a $\mu_p$-continuity set for all $p$ and given any $\epsilon > 0$, we can find bounded and Lipschitz continuous functions $g, h : \mathbb{R}^d \to \mathbb{R}$ such that $g \le I_A \le h$ and $\sup_p \int (h - g) \mathrm{d} \mu_p < \epsilon$.
Since $g, h$ are bounded and Lipschitz functions, from \ref{p4} we have for all sufficiently large $n$,
\begin{align}
\sup_p \left| \int g \mathrm{d} \mu_{n, p} - \int g \mathrm{d} \mu_p \right| < \epsilon \quad\text{and}\quad
\sup_p \left| \int h \mathrm{d} \mu_{n, p} - \int h \mathrm{d} \mu_p \right| < \epsilon .
\label{portmanteau_eq8}
\end{align}
Now,
\begin{align*}
& \mu_{n, p}( A ) - \mu_p( A )
\le \left( \int h \mathrm{d} \mu_{n, p} - \int h \mathrm{d} \mu_p \right) + \left( \int h \mathrm{d} \mu_p - \int g \mathrm{d} \mu_p \right) ,\\
& \mu_p( A ) - \mu_{n, p}( A )
\le \left( \int h \mathrm{d} \mu_p - \int g \mathrm{d} \mu_p \right) + \left( \int g \mathrm{d} \mu_p - \int g \mathrm{d} \mu_{n, p} \right) ,\\
\intertext{which implies}
& | \mu_{n, p}( A ) - \mu_p( A ) |
\le \left| \int g \mathrm{d} \mu_{n, p} - \int g \mathrm{d} \mu_p \right|
+ \left| \int h \mathrm{d} \mu_{n, p} - \int h \mathrm{d} \mu_p \right|
+ \int (h - g) \mathrm{d} \mu_p .
\end{align*}
Hence, from \eqref{portmanteau_eq8}, we have
\begin{align*}
& \sup_p | \mu_{n, p}( A ) - \mu_p( A ) | \\
& \le \sup_p \left| \int g \mathrm{d} \mu_{n, p} - \int g \mathrm{d} \mu_p \right|
+ \sup_p \left| \int h \mathrm{d} \mu_{n, p} - \int h \mathrm{d} \mu_p \right|
+ \sup_p \int (h - g) \mathrm{d} \mu_p \\
& < 3 \epsilon \quad \text{for all sufficiently large $n$.}
\end{align*}
The proof follows since $\epsilon > 0$ is arbitrary.

\ref{p5} $\implies$ \ref{p1}:
Let $g : \mathbb{R}^d \to \mathbb{R}$ be a non-negative bounded continuous function, and let $g(\cdot) \le L$. Then, for any probability measure $\nu$ on $\mathbb{R}^d$,
\begin{align*}
\int g(\bx )  \mathrm{d} \nu( \bx )
& = \int \int_{0}^\infty I( t < g( \bx ) ) \mathrm{d} t \; \mathrm{d} \nu( \bx ) 
= \int \int_{0}^{L} I( t < g( \bx ) ) \mathrm{d} t \; \mathrm{d} \nu( \bx ),
\end{align*}
which, by Fubini's theorem (Theorem 18.3 in \cite{billingsley2008probability}), implies that
\begin{align}
\int g(\bx )  \mathrm{d} \nu( \bx )
= \int_{0}^{L} \int I( t < g( \bx ) ) \mathrm{d} \nu( \bx ) \; \mathrm{d} t .
\label{portmanteau_eq9}
\end{align}
Since $g$ is continuous, $\partial \{ \bx \in \mathbb{R}^d : t < g( \bx ) \} \subset \{ \bx \in \mathbb{R}^d : g( \bx ) = t \}$. There can be only countably many values of $t$ such that $\mu_p\{ \bx \in \mathbb{R}^d : g( \bx ) = t \} > 0$ for at least one $p$. So, $\{ \bx \in \mathbb{R}^d : g( \bx ) = t \}$ is a $\mu_p$-continuity set for every $p$ except for countably many $t$, which means, from \ref{p5},
\begin{align}
\lim\limits_{n \to \infty} \sup_p \left| \int I( t < g( \bx ) ) \mathrm{d} \mu_{n, p}( \bx ) - \int I( t < g( \bx ) ) \mathrm{d} \mu_p( \bx ) \right| = 0 ,
\label{portmanteau_eq10}
\end{align}
except for countably many $t$.
Define $g_n : \mathbb{R} \to \mathbb{R}$ by
\begin{align*}
g_n( t ) = \sup_p \left| \int I( t < f( \bx ) ) \mathrm{d} \mu_{n, p}( \bx ) - \int I( t < f( \bx ) ) \mathrm{d} \mu_p( \bx ) \right| .
\end{align*}
Then, clearly, $0 \le g_n \le 1$ for every $n$, and from \eqref{portmanteau_eq10}, $g_n( t ) \to 0$ except for countably many $t$. Therefore, from the bounded convergence theorem (Theorem 16.5 in \cite{billingsley2008probability}), we have
\begin{align}
\int_{0}^{L} g_n( t ) \mathrm{d} t \to 0 \text{ as } n \to \infty .
\label{portmanteau_eq11}
\end{align}
On the other hand, for every $p$ and $n$,
\begin{align*}
\left| \int g \mathrm{d} \mu_{n, p} - \int g \mathrm{d} \mu_p \right| 
& = \left| \int_{0}^{L} \left\{ \int I( t < g( \bx ) ) \mathrm{d} \mu_{n, p}( \bx ) - \int I( t < g( \bx ) ) \mathrm{d} \mu_p( \bx ) \right\} \mathrm{d} t \right| \\
& \le \int_{0}^{L} \left| \int I( t < g( \bx ) ) \mathrm{d} \mu_{n, p}( \bx ) - \int I( t < g( \bx ) ) \mathrm{d} \mu_p( \bx ) \right| \mathrm{d} t \\
& \le \int_{0}^{L} \sup_p \left| \int I( t < g( \bx ) ) \mathrm{d} \mu_{n, p}( \bx ) - \int I( t < g( \bx ) ) \mathrm{d} \mu_p( \bx ) \right| \mathrm{d} t \\
& = \int_{0}^{L} g_n( t ) \mathrm{d} t ,
\end{align*}
which implies
\begin{align}
\sup_p \left| \int g \mathrm{d} \mu_{n, p} - \int g \mathrm{d} \mu_p \right| \le \int_{0}^{L} g_n( t ) \mathrm{d} t .
\label{portmanteau_eq12}
\end{align}
Hence, \eqref{portmanteau_eq11} and \eqref{portmanteau_eq12} together imply that for every non-negative bounded continuous function~$g$,
\begin{align}
\lim\limits_{n \to \infty} \sup_p \left| \int g \mathrm{d} \mu_{n, p} - \int g \mathrm{d} \mu_p \right| = 0 .
\label{portmanteau_eq13}
\end{align}
Now, let $f : \mathbb{R}^d \to \mathbb{R}$ be a bounded continuous function. Then, $f = f_+ + f_-$, where $f_+( \bx ) = f( \bx ) I( f( \bx ) \ge 0 )$ and $f_-( \bx ) = f( \bx ) I( f( \bx ) < 0 )$. Since $f$ is bounded and continuous, both $f_+$ and $f_-$ are bounded and continuous functions, and from \eqref{portmanteau_eq13} we have
\begin{align}
\begin{split}
& \lim\limits_{n \to \infty} \sup_p \left| \int f_+ \mathrm{d} \mu_{n, p} - \int f_+ \mathrm{d} \mu_p \right| = 0 ,\\
& \lim\limits_{n \to \infty} \sup_p \left| \int f_- \mathrm{d} \mu_{n, p} - \int f_- \mathrm{d} \mu_p \right| = 0 .
\end{split}
\label{portmanteau_eq14}
\end{align}
Now, for every $p$ and $n$,
\begin{align*}
\left| \int f \mathrm{d} \mu_{n, p} - \int f \mathrm{d} \mu_p \right|
& \le \left| \int f_+ \mathrm{d} \mu_{n, p} - \int f_+ \mathrm{d} \mu_p \right|
+ \left| \int f_- \mathrm{d} \mu_{n, p} - \int f_- \mathrm{d} \mu_p \right| \\
& \le \sup_p \left| \int f_+ \mathrm{d} \mu_{n, p} - \int f_+ \mathrm{d} \mu_p \right|
+ \sup_p \left| \int f_- \mathrm{d} \mu_{n, p} - \int f_- \mathrm{d} \mu_p \right| ,
\end{align*}
which implies, for every $n$,
\begin{align}
& \sup_p \left| \int f \mathrm{d} \mu_{n, p} - \int f \mathrm{d} \mu_p \right| \nonumber\\
& \le \sup_p \left| \int f_+ \mathrm{d} \mu_{n, p} - \int f_+ \mathrm{d} \mu_p \right|
+ \sup_p \left| \int f_- \mathrm{d} \mu_{n, p} - \int f_- \mathrm{d} \mu_p \right| .
\label{portmanteau_eq15}
\end{align}
\eqref{portmanteau_eq14} and \eqref{portmanteau_eq15} together imply that
\begin{align*}
\lim\limits_{n \to \infty} \sup_p \left| \int f \mathrm{d} \mu_{n, p} - \int f \mathrm{d} \mu_p \right| = 0 .
\end{align*}

We have established that \ref{p1} $\implies$ \ref{p2} $\implies$ \ref{p3} $\implies$ \ref{p4} $\implies$ \ref{p5} $\implies$ \ref{p1}. Next, we prove that \ref{p5} $\implies$ \ref{p6} $\implies$ \ref{p3}.

\ref{p5} $\implies$ \ref{p6}:
Let $\bx = (x_1, \ldots, x_d)^\top$ be a continuity point of $F_p(\cdot)$ for all $p$. Let $A = \{ \by = (y_1, \ldots, y_d)^\top \in \mathbb{R}^d : y_i \le x_i \text{ for } i = 1, \ldots, d \}$. Then $\partial A = \{ \by = (y_1, \ldots, y_d)^\top \in \mathbb{R}^d : y_i \le x_i \text{ for } i = 1, \ldots, d \text{ and } y_i = x_i \text{ for at least one } i \}$. Since $\mu_p(A) = F_p(\bx)$ and $\bx$ is a continuity point of $F_p(\cdot)$ for all $p$, $\mu_p(\partial A) = 0$ for all $p$, and hence $\sup_p | F_{n, p}( \bx ) - F_p( \bx ) | = \sup_p | \mu_{n, p}( A ) - \mu_p( A ) | \to 0$ as $n \to \infty$.

\ref{p6} $\implies$ \ref{p3}:
Define $A_{i, u} = \{ \bx = ( x_1, \ldots, x_d )^\top \in \mathbb{R}^d : x_i = u \}$ for $i = 1, \ldots, d$. Since for every $i$, the sets $A_{i, u}$ are disjoint for distinct values of $u$, there can be at most countably many values of $u$ such that $\mu_p( A_{i, u} ) > 0$ for at least one $p$. Hence, the set
\begin{align}
B = \{ u \in \mathbb{R} : \mu_p( A_{i, u} ) > 0 \text{ for at least one } p \text{ and at least one } i \}
\label{portmanteau_eq16}
\end{align}
is at most countable. We establish below that any $\bx = ( x_1, \ldots, x_d )^\top$ such that $x_i \notin B$ for every $i$ is a continuity point of $F_p( \cdot )$ for all $p$.

Suppose, $\bu = ( u_1, \ldots, u_d )^\top$ is not a continuity point of $F_p( \cdot )$. Then, there is $\epsilon_1 > 0$ and a sequence $\{ \bw_n^{(1)} = ( w_{n,1}^{(1)}, \ldots, w_{n,d}^{(1)} )^\top \}$ converging to $\bu$ such that $| F_p( \bu ) - F_p( \bw_n^{(1)} ) | > \epsilon_1$ for all $n$. Define the sets $B_i^{(1)} = \{ \bx = ( x_1, \ldots, x_d )^\top \in \mathbb{R}^d : x_i < u_i \}$ for $i = 1, \ldots, d$. Because $F_p( \cdot )$ is a distribution function, any sequence $\{ \bx_n = ( x_{n,1}, \ldots, x_{n,d} )^\top \}$ that converges to $\bu$ such that for all sufficiently large $n$, $x_{n,i} \ge u_i$ for all $i$, one must have $F_p( \bx_n ) \to F_p( \bu )$ as $n \to \infty$. Therefore, for all sufficiently large $n$, $\{ \bw_n^{(1)} \} \subset \cup_{i = 1}^d B_i^{(1)}$, and because there are only $d$ sets $B_i^{(1)}$, there is some $i_1$ such that $B_{i_1}^{(1)}$ contains infinitely many elements from $\{ \bw_n^{(1)} \}$, i.e., $\{ \bw_n^{(1)} \}$ has a subsequence $\{ \by_n^{(1)} =\{( y_{n,1}^{(1)}, \ldots, y_{n,d}^{(1)})^\top \}$ with $\{ \by_n^{(1)} \} \subset B_{i_1}^{(1)}$. Define a sequence $\{ \bz_n^{(1)} = ( z_{n,1}^{(1)}, \ldots, z_{n,d}^{(1)} )^\top \}$ such that $z_{n,i}^{(1)} = y_{n,i}^{(1)}$ for $i \neq i_1$ and $z_{n,i_1}^{(1)} = u_{i_1}$, and consider the sets $C_n^{(1)} = \{ \bx = ( x_1, \ldots, x_d )^\top \in \mathbb{R}^d : x_i \le y_{n,i}^{(1)} \text{ for } i \neq i_1 \text{ and } y_{n,i_1}^{(1)} < x_{i_1} \le u_{i_1} \}$ and $C^{(1)} = \{ \bx = ( x_1, \ldots, x_d )^\top \in \mathbb{R}^d : x_i \le y_{n,i}^{(1)} \text{ for } i \neq i_1 \text{ and } x_{i_1} = u_{i_1} \}$. Since $\{ \by_n^{(1)} \}$, being a subsequence of $\{ \bw_n^{(1)} \}$, converges to $\bu$, $y_{n,{i_1}}^{(1)} \to u_{i_1}$ as $n \to \infty$ as well as $y_{n,{i_1}}^{(1)} < u_{i_1}$ for all $n$, and this implies that the sequence of sets $\{ C_n^{(1)} \}$ converges to $C^{(1)}$. Therefore, $\mu_p( C_n^{(1)} ) \to \mu_p( C^{(1)} )$ as $n \to \infty$. Since $\mu_p( C_n^{(1)} ) = F_p( \bz_n^{(1)} ) - F_p( \by_n^{(1)} )$ for all $n$, if $F_p( \bz_n^{(1)} )$ converges to $F_p( \bu )$ as $n \to \infty$, we must have $\mu_p( C^{(1)} ) > \epsilon_1 / 2$, which implies $\mu_p( A_{i_1, u_{i_1}} ) > \epsilon_1 / 2$, because $C^{(1)} \subset A_{i_1, u_{i_1}}$.

On the other hand, if $F_p( \bz_n^{(1)} )$ does not converge to $F_p( \bu )$ as $n \to \infty$, there is $\epsilon_2 > 0$ and a subsequence $\{ \bw_n^{(2)} = ( w_{n,1}^{(2)}, \ldots, w_{n,d}^{(2)} )^\top \}$ of $\{ \bz_n^{(1)} \}$ such that $| F_p( \bu ) - F_p( \bw_n^{(2)} ) | > \epsilon_2$ for all $n$. Clearly, $\{ \bw_n^{(2)} \}$ converges to $\bu$ as $n \to \infty$ and $w_{n,i_1}^{(2)} = u_{i_1}$ for all $n$. Now, define the sets $B_i^{(2)} = \{ \bx = ( x_1, \ldots, x_d )^\top \in \mathbb{R}^d : x_i < u_i \text{ for } i \neq i_1 \text{ and } x_{i_1} = u_{i_1} \}$ for $i \in \{ 1, \ldots, d \}, \; i \neq i_1$. Using similar argument as before, we get a subsequence $\{ \by_n^{(2)} = ( y_{n,1}^{(2)}, \ldots, y_{n,d}^{(2)} )^\top \}$ of $\{ \bw_n^{(2)} \}$ with $\{ \by_n^{(2)} \} \subset B_{i_2}^{(2)}$ for some $i_2 \neq i_1$. Next, define the sequence $\{ \bz_n^{(2)} = ( z_{n,1}^{(2)}, \ldots, z_{n,d}^{(2)} )^\top \}$ such that $z_{n,i}^{(2)} = y_{n,i}^{(2)}$ for $i \neq i_2$ and $z_{n,i_2}^{(2)} = u_{i_2}$, and consider the sets $C_n^{(2)} = \{ \bx = ( x_1, \ldots, x_d )^\top \in \mathbb{R}^d : x_i \le y_{n,i}^{(2)} \text{ for } i \notin \{ i_1, i_2 \},\; y_{n,i_2}^{(2)} < x_{i_2} \le u_{i_2} \text{ and } x_{i_1} = u_{i_1} \}$ and $C^{(2)} = \{ \bx = ( x_1, \ldots, x_d )^\top \in \mathbb{R}^d : x_i \le y_{n,i}^{(2)} \text{ for } i \notin \{ i_1, i_2 \} \text{ and } x_i = u_i \text{ for } i \in \{ i_1, i_2 \} \}$. Again using arguments similar to those used before, we get $\mu_p( C_n^{(2)} ) \to \mu_p( C^{(2)} )$ as $n \to \infty$ and $\mu_p( C_n^{(2)} ) = F_p( \bz_n^{(2)} ) - F_p( \by_n^{(2)} )$ for all $n$. If $F_p( \bz_n^{(2)} )$ converges to $F_p( \bu )$ as $n \to \infty$, we have $\mu_p( C^{(2)} ) > \epsilon_2 / 2$, and this implies $\mu_p( A_{i_2, u_{i_2}} ) > \epsilon_2 / 2$, since $C^{(2)} \subset A_{i_2, u_{i_2}}$. If $F_p( \bz_n^{(2)} )$ does not converge to $F_p( \bu )$ as $n \to \infty$, there is $\epsilon_3 > 0$ and a subsequence $\{ \bw_n^{(3)} \}$ of $\{ \bz_n^{(2)} \}$ such that $| F_p( \bu ) - F_p( \bw_n^{(3)} ) | > \epsilon_3$ for all $n$, $\{ \bw_n^{(3)} \}$ converges to $\bu$ as $n \to \infty$ and $w_{n,i}^{(3)} = u_{i}$ for $i \in \{ i_1, i_2 \}$ and for all $n$, and we repeat the same argument.

Each iteration above fixes one additional component of the resultant sequence of $d$-dimensional vectors $\{ \bz_n^{(k)} \}$ to the corresponding component of $\bu$, and we continue the iterations if $F_p( \bz_n^{(k)} )$ does not converge to $F_p( \bu )$ as $n \to \infty$. If we still do not get convergence of $F_p( \bz_n^{(k)} )$ after $k = (d - 1)$ iterations, using the same arguments as in previous iterations, we get some $\epsilon_d > 0$ and a subsequence $\{ \by_n^{(d)} = ( y_{n,1}^{(d)}, \ldots, y_{n,d}^{(d)} )^\top \}$ of $\{ \bz_n^{(d - 1)} \}$ such that $y_{n,i}^{(d)} = u_i$ for $i \in \{ i_1, \ldots, i_{d-1} \}$, $y_{n,i_d}^{(d)} < u_{i_d}$ for all $n$, $\by_n^{(d)} \to \bu$ as $n \to \infty$ but $| F_p( \bu ) - F_p( \by_n^{(d)} ) | > \epsilon_d$ for all $n$. Now define $C_n^{(d)} = \{ \bx = ( x_1, \ldots, x_d )^\top \in \mathbb{R}^d : y_{n,i_2}^{(d)} < x_{i_d} \le u_{i_d} \text{ and } x_i = u_i \text{ for } i \in \{ i_1, \ldots, i_{d - 1} \} \}$. We have $\mu_p( C_n^{(d)} ) = F_p( \bu ) - F_p( \by_n^{(d)} ) = | F_p( \bu ) - F_p( \by_n^{(d)} ) | > \epsilon_d$ for all $n$. Also, the sets $C_n^{(d)} \to \{ \bu \}$ as $n \to \infty$ because $\by_n^{(d)}$ converges to $\bu$, which implies $\mu_p( \{ \bu \} ) \ge \epsilon_d$. But this implies that $\mu_p( A_{i, u_i} ) > 0$ for every $i$ since $\{ \bu \} = \cap_{i = 1}^d A_{i, u_i}$.
Hence, we have established that if $\bu = ( u_1, \ldots, u_d )^\top$ is not a continuity point of $F_p( \cdot )$, then $\mu_p( A_{i, u_i} ) > 0$ for at least one $i$.

Therefore, any $\bx = ( x_1, \ldots, x_d )^\top$ such that $x_i \notin B$ for every $i$ is a continuity point of $F_p( \cdot )$ for all $p$, where $B$ is described in \eqref{portmanteau_eq16}.

Next, let $\ba = ( a_1, \ldots, a_d )^\top$ and $\bb = ( b_1, \ldots, b_d )^\top$ be such that $a_i \notin B$, $b_i \notin B$ and $a_i < b_i$ for every $i$. Let $A = \{ \bx = ( x_1, \ldots, x_d )^\top \in \mathbb{R}^d : a_i < x_i \le b_i \text{ for } i = 1, \ldots, d \}$. We proceed to show that under \ref{p6}, $\sup_p | \mu_{n, p}( A ) - \mu_p( A ) | \to 0$ as $n \to \infty$. Let $\bc = ( c_1, \ldots, c_d )^\top$ be such that each $c_i$ is either $a_i$ or $b_i$, and let $n( \bc, \ba )$ denote the number of $a_i$'s in the components of the vector $\bc$. So, there are $2^d$ possible values of the vector $\bc$ depending on the values of $\ba$ and $\bb$. Let $I_{r, i}( \cdot ) : \mathbb{R}^d \to \mathbb{R}$ be defined as $I_{r, i}( \bx ) = I( x_i \le r )$, where $\bx = ( x_1, \ldots, x_d )^\top$ and $i = 1, \ldots, d$. Clearly, for any probability measure $\nu$ on $\mathbb{R}^d$, $\nu( A ) = \int \prod_{i = 1}^d \{ I_{b_i, i}( \bx ) - I_{a_i, i}( \bx ) \} \mathrm{d} \nu( \bx )$. Also, $\prod_{i = 1}^d \{ I_{b_i, i}( \bx ) - I_{a_i, i}( \bx ) \} = \sum_{\bc} (-1)^{ n( \bc, \ba ) } \prod_{i = 1}^d I_{c_i, i}( \bx )$, where the sum is over all $2^d$ values of $\bc$. Hence, $\nu( A ) = \sum_{\bc} (-1)^{ n( \bc, \ba ) } \int \prod_{i = 1}^d I_{c_i, i}( \bx ) \mathrm{d} \nu( \bx ) = \sum_{\bc} (-1)^{ n( \bc, \ba ) } \nu( B_\bc )$, where $B_\bc = \{ \bx = ( x_1, \ldots, x_d )^\top \in \mathbb{R}^d : x_i \le c_i \text{ for } i = 1, \ldots, d \}$. Since $\mu_{n, p}( B_\bc ) = F_{n, p}( \bc )$ and $\mu_p( B_\bc ) = F_p( \bc )$, we get that $\mu_{n, p}( A ) = \sum_{\bc} (-1)^{ n( \bc, \ba ) } F_{n, p}( \bc )$ and $\mu_p( A ) = \sum_{\bc} (-1)^{ n( \bc, \ba ) } F_p( \bc )$, which from \ref{p6} yields
\begin{align}
\sup_p | \mu_{n, p}( A ) - \mu_p( A ) | \le \sum_{\bc} (-1)^{ n( \bc, \ba ) } \sup_p \left| F_{n, p}( \bc ) - F_p( \bc ) \right| \to 0
\label{portmanteau_eq17}
\end{align}
as $n \to \infty$.

Let $f : \mathbb{R}^d \to \mathbb{R}$ be a continuous function which is zero outside the compact set $K$. While proving \ref{p2} $\implies$ \ref{p3} above, it was established that such a function $f$ is bounded and uniformly continuous. So, given $\epsilon > 0$, there is $\delta > 0$ such that $| \bx - \by | < \delta$ implies $| f( \bx ) - f( \by ) | < \epsilon$. We now construct a finite partition of $K$ in hyper-rectangles such that for every vertex $\bx = ( x_1, \ldots, x_d )^\top$ of any of the hyper-rectangles, $x_i \notin B$ for $i = 1, \ldots, d$, and hence every such vertex is a continuity point of $F_p( \cdot )$ for all $p$, and also, for points $\bx$ and $\by$ lying in a hyper-rectangle, $| \bx - \by | < \delta$.

Since $B$ given in \eqref{portmanteau_eq16} is at most countable, $B^c$ is dense in $\mathbb{R}$. Because $K$ is compact and $B^c$ is dense, there is $s > 0$ such that $s \in B^c$ and $K$ is a subset of the hyper-rectangle $( -s, s ]^d$. Take $s_1 = -s$, and given $s_k < s$ with $s - s_k \ge \delta / \sqrt{d}$, choose $s_{k + 1} > s_k$ such that $s_{k + 1} \in B^c$, $s_{k + 1} \le s$ and $\delta / 2 \sqrt{d} < s_{k + 1} - s_k < \delta / \sqrt{d}$. If $s - s_k < \delta / \sqrt{d}$, take $s_{k + 1} = s$. Such a sequence of $s_k$'s can be chosen because $B^c$ is dense. Also, by the method of construction, the sequence $\{ s_k \}$ is finite and can have at most $ ( 4 a \sqrt{d} / \delta ) + 1$ many elements. Let $q$ be the number of elements in $\{ s_k \}$. Then, the collection of intervals $\{ ( s_k, s_{k + 1} ] : k = 1, \ldots, q - 1 \}$ partitions $( -s, s ]$, and $s_i \notin B$ for $i = 1, \ldots, q$. Consider the corresponding partition of $( -s, s ]^d$ formed by hyper-rectangles with vertices $\bb = ( b_1, \ldots, b_d )^\top$ such that $b_i = s_j$ for some $j \in \{ 1, \ldots, q \}$ and for $i = 1, \ldots, d$. Clearly, this partition has at most $(q - 1)^d$ many hyper-rectangles, which are formed by taking products of the intervals $\{ ( s_k, s_{k + 1} ] \}$. Now, from this partition of $( -s, s ]^d$, we remove those hyper-rectangles $A$ such that $A \cap K = \emptyset$. Let $m$ hyper-rectangles remain in the collection, and we denote them as $A_1, \ldots, A_m$. The collection $\{ A_1, \ldots, A_m \}$ provides a  partition of $K$ in hyper-rectangles such that for every vertex $\bb = ( b_1, \ldots, b_d )^\top$ of any of the hyper-rectangles, $b_i \notin B$ for $i = 1, \ldots, d$. If $\bx = ( x_1, \ldots, x_d )^\top$ and $\by = ( y_1, \ldots, y_d )^\top$ lie in a hyper-rectangle, $| x_i - y_i | < \delta / \sqrt{d}$ for every $i$, and hence $| \bx - \by | < \delta$. Each $A_k$ is of the form $A_k = \prod_{i = 1}^d ( s_{k_i}, s_{k_i + 1} ]$, and define $\bz_k = ( z_{k, 1}, \ldots, z_{k, d} )^\top$ such that $z_{k, i} = ( s_{k_i} + s_{k_i + 1} ) / 2$ for $i = 1, \ldots, d$, i.e., $\bz_k$ is the mid-point of the hyper-rectangle $A_k$. In addition, from \eqref{portmanteau_eq17} we get~that
\begin{align}
\sup_p | \mu_{n, p}( A_k ) - \mu_p( A_k ) | \to 0 \text{ as } n \to \infty \text{ for } k = 1, \ldots, m .
\label{portmanteau_eq18}
\end{align}

Now, corresponding to $f$, we define another function $f_1 : \mathbb{R}^d \to \mathbb{R}$ in the following way. For any $\bx$, if $\bx \notin \cup_{i = 1}^m A_i$, then define $f_1( \bx ) = 0$. If $\bx \in \cup_{i = 1}^m A_i$, then there is exactly one hyper-rectangle $A_k$ which contains $\bx$, because $A_i$'s are disjoint, and define $f_1( \bx )$ for such an $\bx$ as $f_1( \bx ) = f( \bz_k )$, where $\bz_k$ is the mid-point of $A_k$. Note that for $\bx \notin \cup_{i = 1}^m A_i$, $f( \bx ) = f_1( \bx ) = 0$. And for $\bx \in A_i$ for any $A_i$, $| f( \bx ) - f_1( \bx ) | = | f( \bx ) - f( \bz_i ) | < \epsilon$, since $| \bx - \bz | < \delta$. Therefore,
\begin{align}
\sup_\bx | f( \bx ) - f_1( \bx ) |
& = \max\left\{ \sup_{\bx \in \cup_{i = 1}^m A_i} | f( \bx ) - f_1( \bx ) |, \sup_{\bx \notin \cup_{i = 1}^m A_i} | f( \bx ) - f_1( \bx ) | \right\} \nonumber\\
& = \sup\left\{ | f( \bx ) - f_1( \bx ) | : \bx \in \cup_{i = 1}^m A_i \right\} \nonumber\\
& = \max\left[ \sup\left\{ | f( \bx ) - f_1( \bx ) | : \bx \in A_i,\; i = 1, \ldots, m \right\} \right]
< \epsilon .
\label{portmanteau_eq19}
\end{align}
On the other hand, we have
\begin{align*}
\int f_1 \mathrm{d} \mu_{n, p} = \sum_{k = 1}^m f( \bz_k ) \mu_{n, p}( A_k ) \;\text{and}\;
\int f_1 \mathrm{d} \mu_p = \sum_{k = 1}^m f( \bz_k ) \mu_p( A_k ) ,
\end{align*}
which, from \eqref{portmanteau_eq18}, implies that
\begin{align}
\sup_p \left| \int f_1 \mathrm{d} \mu_{n, p} - \int f_1 \mathrm{d} \mu_p \right|
\le \sum_{k = 1}^m \left| f( \bz_k ) \right| \sup_p \left| \mu_{n, p}( A_k ) - \mu_p( A_k ) \right| \to 0
\label{portmanteau_eq20}
\end{align}
as $n \to \infty$.
Using \eqref{portmanteau_eq19} and \eqref{portmanteau_eq20}, we get
\begin{align*}
\sup_p \left| \int f \mathrm{d} \mu_{n, p} - \int f \mathrm{d} \mu_p \right|
& \le \sup_p \left| \int \{ f( \bx ) - f_1( \bx ) \} \mathrm{d} \mu_{n, p}( \bx ) \right| + \sup_p \left| \int \{ f( \bx ) - f_1( \bx ) \} \mathrm{d} \mu_p( \bx ) \right| \\
& \quad + \sup_p \left| \int f_1 \mathrm{d} \mu_{n, p} - \int f_1 \mathrm{d} \mu_p \right| \\
& \le 2 \sup_\bx\left| f( \bx ) - f_1( \bx ) \right| 
+ \sup_p \left| \int f_1 \mathrm{d} \mu_{n, p} - \int f_1 \mathrm{d} \mu_p \right| \\
& < 3 \epsilon
\end{align*}
for all sufficiently large $n$. This establishes \ref{p6} $\implies$ \ref{p3} since $\epsilon > 0$ is arbitrary.
\end{proof}

We need the following result (Lemma 8.7.1 in \cite{lebanon2012probability}) for proving the uniform-over-$p$ L\'{e}vy's continuity theorem.
\begin{lemma}[\cite{lebanon2012probability}]\label{lem3}
For $ a > 0 $ and $b \in \mathbb{R}$,
\begin{align*}
\int_{-\infty}^\infty \exp\left( - a x^2 + b x \right) \mathrm{d} x = \sqrt{\frac{\pi}{a}} \exp\left( \frac{b^2}{4 a} \right) .
\end{align*}
\end{lemma}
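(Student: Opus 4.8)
The plan is to reduce the integral to the standard Gaussian integral by completing the square, and then to evaluate the standard Gaussian integral by the classical polar-coordinates argument.

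First I would complete the square in the exponent: since $a > 0$,
\[
-a x^2 + b x = -a\left( x - \frac{b}{2a} \right)^2 + \frac{b^2}{4a} .
\]
Substituting $u = x - b/(2a)$, which is a translation and hence preserves Lebesgue measure, gives
\[
\int_{-\infty}^\infty \exp\left( -a x^2 + b x \right) \mathrm{d} x
= \exp\left( \frac{b^2}{4a} \right) \int_{-\infty}^\infty \exp\left( -a u^2 \right) \mathrm{d} u ,
\]
so it remains to show that $\int_{-\infty}^\infty \exp(-a u^2)\, \mathrm{d} u = \sqrt{\pi / a}$. A further substitution $v = \sqrt{a}\, u$ reduces this to proving $I := \int_{-\infty}^\infty \exp(-v^2)\, \mathrm{d} v = \sqrt{\pi}$.

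Then I would compute $I^2$. Since the integrand is nonnegative, Tonelli's theorem allows writing
\[
I^2 = \int_{-\infty}^\infty \int_{-\infty}^\infty \exp\left( -( v^2 + w^2 ) \right) \mathrm{d} v\, \mathrm{d} w ,
\]
and passing to polar coordinates $v = r \cos\theta$, $w = r \sin\theta$ yields
\[
I^2 = \int_0^{2\pi} \int_0^\infty \exp(-r^2)\, r\, \mathrm{d} r\, \mathrm{d}\theta = 2\pi \cdot \frac{1}{2} = \pi ,
\]
using $\int_0^\infty r \exp(-r^2)\, \mathrm{d} r = 1/2$. Since $I > 0$, we get $I = \sqrt{\pi}$, and unwinding the two substitutions delivers the claimed identity.

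There is essentially no obstacle here; this is a well-known computation. The only points worth a word of care are the translation invariance of Lebesgue measure used in the first substitution, the applicability of Tonelli's theorem (legitimate because the integrand is nonnegative), and the change of variables to polar coordinates, all of which are standard.
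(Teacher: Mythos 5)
Your proof is correct and complete: completing the square to reduce to $\int e^{-au^2}\,du$, rescaling to the standard Gaussian, and evaluating $\left(\int e^{-v^2}\,dv\right)^2$ by Tonelli and polar coordinates is the textbook argument, with the measure-theoretic justifications you flag (translation invariance, Tonelli for a nonnegative integrand, the polar change of variables) handled appropriately. Note, however, that the paper does not prove this lemma at all; it simply cites it as Lemma 8.7.1 of the referenced text, so there is no internal proof to compare against — your write-up fills in the standard derivation the paper takes as known.
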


\begin{proof}[Proof of \autoref{levy}]

Let $ \bX_{n, p} $ and $ \bX_p $ be random vectors with distributions $ \mu_{n, p} $ and $ \mu_p $, respectively.
Suppose $ \mu_{n, p} \Longrightarrow \mu_p $ uniformly over $ p $. For any fixed $ \bt \in \mathbb{R}^d $, $ \cos( \bt^\top \bx ) $ and $ \sin( \bt^\top \bx ) $ are bounded continuous functions of $ \bx $. Hence, from \autoref{definition1}, we have
\begin{align*}
\lim\limits_{n \to \infty}\sup_p \left| \E\left[ \cos\left( \bt^\top \bX_{n, p} \right) \right] - \E\left[ \cos\left( \bt^\top \bX_p \right) \right] \right| = 0 ,\\
\lim\limits_{n \to \infty}\sup_p \left| \E\left[ \sin\left( \bt^\top \bX_{n, p} \right) \right] - \E\left[ \sin\left( \bt^\top \bX_p \right) \right] \right| = 0 .
\end{align*}
Therefore,
\begin{align*}
& \sup_p | \varphi_{n, p}( \bt ) - \varphi_p( \bt ) | \\
& = \sup_p \left( \left| \E\left[ \cos\left( \bt^\top \bX_{n, p} \right) \right] - \E\left[ \cos\left( \bt^\top \bX_p \right) \right] \right|^2 
+ \left| \E\left[ \sin\left( \bt^\top \bX_{n, p} \right) \right] - \E\left[ \sin\left( \bt^\top \bX_p \right) \right] \right|^2 \right)^{\frac{1}{2}} \\
& \le \sup_p \left\{ \left| \E\left[ \cos\left( \bt^\top \bX_{n, p} \right) \right] - \E\left[ \cos\left( \bt^\top \bX_p \right) \right] \right| 
+ \left| \E\left[ \sin\left( \bt^\top \bX_{n, p} \right) \right] - \E\left[ \sin\left( \bt^\top \bX_p \right) \right] \right| \right\} \\
& \le \sup_p \left| \E\left[ \cos\left( \bt^\top \bX_{n, p} \right) \right] - \E\left[ \cos\left( \bt^\top \bX_p \right) \right] \right| 
+ \sup_p \left| \E\left[ \sin\left( \bt^\top \bX_{n, p} \right) \right] - \E\left[ \sin\left( \bt \bX_p \right) \right] \right| \\
& \to 0 \text{ as } n \to \infty .
\end{align*}

Next, assume $ \sup_p | \varphi_{n, p}( \bt ) - \varphi( \bt ) | \to 0 $ as $ n \to \infty $. We shall show that for any continuous function $ g : \mathbb{R}^d \to \mathbb{R} $ that is zero outside a compact set, we have $ \sup_p \left| \E\left[ g\left( \bX_{n, p} \right) \right] - \E\left[ g\left( \bX_p \right) \right] \right| \to 0 $ as $ n \to \infty $. Define $ g_+( \bx ) = g( \bx ) I( g( \bx ) \ge 0 ) $ and $ g_-( \bx ) = | g( \bx ) | I( g( \bx ) < 0 ) $. Note that
\begin{align*}
\left| \E\left[ g\left( \bX_{n, p} \right) \right] - \E\left[ g\left( \bX_p \right) \right] \right|
& \le \left| \E\left[ g_+\left( \bX_{n, p} \right) \right] - \E\left[ g_+\left( \bX_p \right) \right] \right|
+ \left| \E\left[ g_-\left( \bX_{n, p} \right) \right] - \E\left[ g_-\left( \bX_p \right) \right] \right| ,
\end{align*}
which implies
\begin{align*}
\sup_p \left| \E\left[ g\left( \bX_{n, p} \right) \right] - \E\left[ g\left( \bX_p \right) \right] \right|
& \le \sup_p \left| \E\left[ g_+\left( \bX_{n, p} \right) \right] - \E\left[ g_+\left( \bX_p \right) \right] \right|
+ \sup_p \left| \E\left[ g_-\left( \bX_{n, p} \right) \right] - \E\left[ g_-\left( \bX_p \right) \right] \right| .
\end{align*}
Since $ g_+( \bx ) $ and $ g_-( \bx ) $ are non-negative functions of $ \bx $, it is enough to consider $ g $ as a non-negative continuous function which is zero outside a compact set. If $ g $ is identically 0, the assertion $ \sup_p \left| \E\left[ g\left( \bX_{n, p} \right) \right] - \E\left[ g\left( \bX_p \right) \right] \right| \to 0 $ as $ n \to \infty $ is vacuously satisfied. So, without loss of generality, we assume that $ g $ is a non-negative and non-zero continuous function which is zero outside a compact set.

Let $ \epsilon > 0 $ be fixed.
As $ g $ is continuous on a compact set, it is uniformly continuous, so we can find $ \delta > 0 $ such that $ | \bx - \by | \le \delta $ implies $ | g( \bx ) - g( \by ) | < \epsilon $.

Let $ \bZ $ be a $d$-dimensional random vector independent of $ \bX_{n, p} $ and $ \bX_p $ for all $ n $ and $ p $ and follow the $ {\cal N}_d( \mathbf{0}, \sigma^2 \mathbf{I}_d ) $ distribution, where $ \sigma^2 $ is small such that $ P[ \| \bZ \| > \delta ] < \epsilon $. We have
\begin{align}
\left| \E\left[ g\left( \bX_{n, p} \right) \right] - \E\left[ g\left( \bX_p \right) \right] \right|
 \le & \left| \E\left[ g\left( \bX_{n, p} \right) \right] - \E\left[ g\left( \bX_{n, p} + \bZ \right) \right] \right| \nonumber\\
& + \left| \E\left[ g\left( \bX_{n, p} + \bZ \right) \right] - \E\left[ g\left( \bX_p + \bZ \right) \right] \right| \nonumber\\
& + \left| \E\left[ g\left( \bX_p + \bZ \right) \right] - \E\left[ g\left( \bX_p \right) \right] \right| . \label{eq:levy1}
\end{align}
Now,
\begin{align}
\sup_p \left| \E\left[ g\left( \bX_{n, p} \right) \right] - \E\left[ g\left( \bX_{n, p} + \bZ \right) \right] \right| 
& \le \sup_p \left| \E\left[ \left\{ g\left( \bX_{n, p} \right) - g\left( \bX_{n, p} + \bZ \right) \right\} \left\{ I( \| \bZ \| \le \delta ) + I( \| \bZ \| > \delta ) \right\} \right] \right| \nonumber\\
& \le \sup_p \E\left[ \left| g\left( \bX_{n, p} \right) - g\left( \bX_{n, p} + \bZ \right) \right| I( \| \bZ \| \le \delta ) \right] \nonumber\\
& \quad + \sup_p \E\left[ \left| g\left( \bX_{n, p} \right) - g\left( \bX_{n, p} + \bZ \right) \right| I( \| \bZ \| > \delta ) \right] \nonumber\\
& < \epsilon + 2 \sup_\bx | g( \bx ) | P[ \| \bZ \| > \delta ]
= \epsilon \left( 1 + 2 \sup_\bx | g( \bx ) | \right) . \label{eq:levy2}
\end{align}
Similarly,
\begin{align}
\sup_p \left| \E\left[ g\left( \bX_p \right) \right] - \E\left[ g\left( \bX_p + \bZ \right) \right] \right|
< \epsilon \left( 1 + 2 \sup_\bx | g( \bx ) | \right) . \label{eq:levy3}
\end{align}
Next, using \autoref{lem3} and Fubini's theorem, we have
\begin{align}
& \E\left[ g\left( \bX_{n, p} + \bZ \right) \right] \nonumber\\
& = \frac{1}{\left( \sqrt{2 \pi} \sigma \right)^d} \int \int g( \bx + \bz ) \exp\left( - \frac{\bz^\top \bz}{2 \sigma^2} \right) \mathrm{d} \bz\, \mathrm{d} \mu_{n, p}( \bx ) \nonumber\\
& = \frac{1}{\left( \sqrt{2 \pi} \sigma \right)^d} \int \int g( \bu ) \exp\left( - \frac{( \bu - \bx )^\top ( \bu - \bx )}{2 \sigma^2} \right) \mathrm{d} \bu\, \mathrm{d} \mu_{n, p}( \bx ) \nonumber\\
& = \frac{1}{\left( \sqrt{2 \pi} \sigma \right)^d} \int \int g( \bu ) \prod_{i = 1}^d \exp\left( - \frac{( u_i - x_i )^2}{2 \sigma^2} \right) \mathrm{d} \bu\, \mathrm{d} \mu_{n, p}( \bx ) \nonumber\\
& = \frac{1}{\left( \sqrt{2 \pi} \sigma \right)^d} \int \int g( \bu ) \prod_{i = 1}^d \frac{\sigma}{\sqrt{2 \pi}} \int \exp\left( \mathrm{i} t_i ( u_i - x_i ) - \frac{\sigma^2 t_i^2}{2} \right) \mathrm{d} t_i\, \mathrm{d} \bu\, \mathrm{d} \mu_{n, p}( \bx ) \nonumber\\
& = \frac{1}{( 2 \pi )^d} \int \int \int g( \bu ) \exp\left( \mathrm{i} \bt^\top \bu - \frac{\sigma^2 \bt^\top \bt}{2} \right) e^{- \mathrm{i} \bt^\top \bx} \mathrm{d} \mu_{n, p}( \bx ) \, \mathrm{d} \bt \, \mathrm{d} \bu \nonumber\\
& = \frac{1}{( 2 \pi )^d} \int \int g( \bu ) \exp\left( \mathrm{i} \bt^\top \bu - \frac{\sigma^2 \bt^\top \bt}{2} \right) \varphi_{n, p}( - \bt ) \mathrm{d} \bt \, \mathrm{d} \bu \nonumber\\
& = \frac{\int g( \bu ) \mathrm{d} \bu}{( \sqrt{2 \pi} \sigma )^d} \int \int \exp\left( \mathrm{i} \bt^\top \bu \right) \varphi_{n, p}( - \bt ) \frac{g( \bu )}{\int g( \bu ) \mathrm{d} \bu} \left( \frac{\sigma}{\sqrt{2 \pi}} \right)^d \exp\left( - \frac{\sigma^2 \bt^\top \bt}{2} \right) \mathrm{d} \bt \, \mathrm{d} \bu \nonumber\\
& = \frac{\int g( \bu ) \mathrm{d} \bu}{( \sqrt{2 \pi} \sigma )^d} \E\left[ \exp\left( \mathrm{i} \bT^\top \bU \right) \varphi_{n, p}( - \bT) \right] , \label{eq:levy4}
\end{align}
where $ \bT $ and $ \bU $ are independent random vectors, with $ \bT $ following the distribution $ {\cal N}_d( {\bf 0}, \sigma^{-2} \mathbf{I}_d ) $ and $ \bU $ has the density $ \left( \int g( \bu ) \mathrm{d} \bu \right)^{-1} g( \bu ) $.

Similarly,
\begin{align}
\E\left[ g\left( \bX_p + \bZ \right) \right]
& =  \frac{\int g( \bu ) \mathrm{d} \bu}{( \sqrt{2 \pi} \sigma )^d} \E\left[ \exp\left( \mathrm{i} \bT^\top \bU \right) \varphi_p( - \bT) \right]. \label{eq:levy5}
\end{align}
From \eqref{eq:levy4} and \eqref{eq:levy5} we have
\begin{align}
& \left| \E\left[ g\left( \bX_{n, p} + \bZ \right) \right] - \E\left[ g\left( \bX_p + \bZ \right) \right] \right| \nonumber\\
& = \frac{\int g( \bu ) \mathrm{d} \bu}{( \sqrt{2 \pi} \sigma )^d} \left| \E\left[ \exp\left( \mathrm{i} \bT^\top \bU \right) \varphi_{n, p}( - \bT ) \right] - \E\left[ \exp\left( \mathrm{i} \bT^\top \bU \right) \varphi_p( - \bT ) \right] \right| \nonumber\\
& \le \frac{\int g( \bu ) \mathrm{d} \bu}{( \sqrt{2 \pi} \sigma )^d} \E\left[ \left| \exp\left( \mathrm{i} \bT^\top \bU \right) \varphi_{n, p}( - \bT ) - \exp\left( \mathrm{i} \bT^\top \bU \right) \varphi_p( - \bT ) \right| \right] \nonumber\\
& = \frac{\int g( \bu ) \mathrm{d} \bu}{( \sqrt{2 \pi} \sigma )^d} \E\left[ \left| \exp\left( \mathrm{i} \bT^\top \bU \right) \right| \left| \varphi_{n, p}( - \bT ) - \varphi_p( - \bT ) \right| \right] \nonumber\\
& = \frac{\int g( \bu ) \mathrm{d} \bu}{( \sqrt{2 \pi} \sigma )^d} \E\left[ \left| \varphi_{n, p}( - \bT ) - \varphi_p( - \bT ) \right| \right] . \label{eq:levy6}
\end{align}
Since $ \sup_p | \varphi_{n, p}( \bt ) - \varphi_p( \bt ) | \to 0 $ as $ n \to \infty $ and $ \sup_p | \varphi_{n, p}( \bt ) - \varphi_p( \bt ) | $ is a measurable function of $ \bt $, we have from \eqref{eq:levy6},
\begin{align*}
\sup_p \left| \E\left[ g\left( \bX_{n, p} + \bZ \right) \right] - \E\left[ g\left( \bX_p + \bZ \right) \right] \right|
& \le \frac{\int g( \bu ) \mathrm{d} \bu}{( \sqrt{2 \pi} \sigma )^d} \sup_p \E\left[ \left| \varphi_{n, p}( - \bT ) - \varphi_p( - \bT ) \right| \right] \\
& \le \frac{\int g( \bu ) \mathrm{d} u}{( \sqrt{2 \pi} \sigma )^d} E\left[ \sup_p \left| \varphi_{n, p}( - \bT ) - \varphi_p( - \bT ) \right| \right] .
\end{align*}
By the bounded convergence theorem, $ \E\left[ \sup_p \left| \varphi_{n, p}( - \bT ) - \varphi( - \bT ) \right| \right] \to 0 $ as $ n \to \infty $, and hence for all sufficiently large $ n $,
\begin{align}
\sup_p \left| \E\left[ g\left( \bX_{n, p} + \bZ \right) \right] - \E\left[ g\left( \bX_p + \bZ \right) \right] \right| < \epsilon .\label{eq:levy7}
\end{align} 
From \eqref{eq:levy1}, \eqref{eq:levy2}, \eqref{eq:levy3} and \eqref{eq:levy7}, we have
\begin{align*}
\sup_p \left| \E\left[ g\left( \bX_{n, p} \right) \right] - \E\left[ g\left( \bX_p \right) \right] \right|
\le \epsilon \left( 3 + 4 \sup_\bx | g( \bx ) | \right) .
\end{align*}
Since $ \epsilon > 0 $ is arbitrary, we have
\begin{align*}
\lim\limits_{n \to \infty} \sup_p \left| \E\left[ g\left( \bX_{n, p} \right) \right] - \E\left[ g\left( \bX_p \right) \right] \right| = 0 .
\end{align*}
\end{proof}

\begin{proof}[Proof of \autoref{mappingthm}]
For any bounded and continuous $f : \mathbb{R} \to \mathbb{R}$, the composite function $h : \mathbb{R}^d \to \mathbb{R}$ defined by $h( \bx ) = f( g( \bx ) )$ is bounded and continuous. Since $\bX_{n, p} \Longrightarrow \bX_p$ uniformly-over-$p$, from \autoref{definition1} we have
\begin{align*}
\sup_p \left| \E[ f( g( \bX_{n, p} ) ) ] - \E[ f( g( \bX_p ) ) ] \right|
= \sup_p \left| \E[ h( \bX_{n, p} ) ] - \E[ h( \bX_p ) ] \right| \to 0
\end{align*}
as $n \to \infty$, which implies that $g\left( \bX_{n, p} \right) \Longrightarrow g\left( \bX_p \right)$ uniformly-over-$p$.
\end{proof}

\begin{proof}[Proof of \autoref{lemmaSlutsky}]
Let $f : \mathbb{R}^2 \to \mathbb{R}$ be a bounded and uniformly continuous function. So, given any $\epsilon > 0$, there is $\delta > 0$ such that $\| ( x_1, x_2 ) - ( y_1, y_2 ) \| \le \delta$ implies that $| f( x_1, x_2 ) - f( y_1, y_2 ) | < \epsilon$.
Therefore, $| c_1 - c_2 | \le \delta$ implies that for any probability measure $\nu$ on $\mathbb{R}^d$,
\begin{align}
& \left| \int f( x, c_1 ) \mathrm{d} \nu( x ) - \int f( x, c_2 ) \mathrm{d} \nu( x ) \right|
\le \int \left| f( x, c_1 ) \mathrm{d} \nu( x ) - f( x, c_2 ) \right| \mathrm{d} \nu( x )
< \epsilon .
\label{lemmaSlutsky_eq1}
\end{align}
Since $\{ c_p \}$ is a bounded sequence, we can construct a finite collection $\{ a_1, \ldots, a_m \} \subset \mathbb{R}$ such that for every $c_p$, $| c_p - a_k | < \delta$ for at least one $a_k$. Let $b_p$ denote the number $a_k \in \{ a_1, \ldots, a_m \}$ closest to $c_p$. Therefore, using \eqref{lemmaSlutsky_eq1}, we get
\begin{align*}
\left| \E[ f( X_{n, p}, c_p ) ] - \E[ f( X_{n, p}, b_p ) ] \right|
+ \left| \E[ f( X_p, c_p ) ] - \E[ f( X_p, b_p ) ] \right|
< 2 \epsilon ,
\end{align*}
which implies that for every $p$ and every $n$,
\begin{align}
& \min\{ \left| \E[ f( X_{n, p}, c_p ) ] - \E[ f( X_{n, p}, a_k ) ] \right| \nonumber\\
& \qquad
+ \left| \E[ f( X_p, c_p ) ] - \E[ f( X_p, a_k ) ] \right| : k = 1, \ldots, m \} \nonumber\\
& < 2 \epsilon .
\label{lemmaSlutsky_eq2}
\end{align}
Next, define $g_c : \mathbb{R} \to \mathbb{R}$ by $g_c( x ) = f( x, c )$ for $c \in \mathbb{R}$. Then, $g_c$ is a bounded and uniformly continuous function for every $c$, and from \ref{p2} in \autoref{portmanteau} we have
\begin{align*}
\sup_p \left| \E[ f( X_{n, p}, c ) ] - \E[ f( X_p, c ) ] \right| =
\sup_p \left| \E[ g_c( X_{n, p} ) ] - \E[ g_c( X_p ) ] \right| \to 0
\end{align*}
as $n \to \infty$. Therefore, there is $n_1$ such that for all $n \ge n_1$,
\begin{align}
\max\left\{ \sup_p \left| \E[ f( X_{n, p}, a_k ) ] - \E[ f( X_p, a_k ) ] \right| : k = 1, \ldots, m \right\}
< \epsilon .
\label{lemmaSlutsky_eq3}
\end{align}
On the other hand, we have for every $p$,
\begin{align*}
& \left| \E[ f( X_{n, p}, c_p ) ] - \E[ f( X_p, c_p ) ] \right| \\
& \le \min\{ \left| \E[ f( X_{n, p}, c_p ) ] - \E[ f( X_{n, p}, a_k ) ] \right| 
+ \left| \E[ f( X_p, c_p ) ] - \E[ f( X_p, a_k ) ] \right| \\
& \qquad\quad\;
+ \left| \E[ f( X_{n, p}, a_k ) ] - \E[ f( X_p, a_k ) ] \right|
: k = 1, \ldots, m \} \\
& \le \min\{ \left| \E[ f( X_{n, p}, c_p ) ] - \E[ f( X_{n, p}, a_k ) ] \right| 
+ \left| \E[ f( X_p, c_p ) ] - \E[ f( X_p, a_k ) ] \right| : k = 1, \ldots, m \} \\
& \quad
+ \max\left\{ \sup_p \left| \E[ f( X_{n, p}, a_k ) ] - \E[ f( X_p, a_k ) ] \right| : k = 1, \ldots, m \right\} ,
\end{align*}
which, from an application of \eqref{lemmaSlutsky_eq2} and \eqref{lemmaSlutsky_eq3} together, yields
\begin{align*}
& \left| \E[ f( X_{n, p}, c_p ) ] - \E[ f( X_p, c_p ) ] \right| < 3 \epsilon
\end{align*}
for every $n \ge n_1$ and every $p$. Hence, for all $n \ge n_1$,
\begin{align}
\sup_p \left| \E[ f( X_{n, p}, c_p ) ] - \E[ f( X_p, c_p ) ] \right| < 3 \epsilon .
\label{lemmaSlutsky_eq4}
\end{align}
Let $l = \sup\{ | f( x, y ) | : ( x, y ) \in \mathbb{R}^2 \}$.
Now, $\| ( X_{n, p}, Y_{n, p} ) - ( X_{n, p}, c_p ) \| = | Y_{n, p} - c_p |$. So, $| Y_{n, p} - c_p | \le \delta$ implies $| f( X_{n, p}, Y_{n, p} ) - f( X_{n, p}, c_p ) | < \epsilon$, and using this fact we have
\begin{align*}
& | \E[ f( X_{n, p}, Y_{n, p} ) ] - \E[ f( X_{n, p}, c_p ) ] | \\
& \le \left| \E[ \{ f( X_{n, p}, Y_{n, p} ) - f( X_{n, p}, c_p ) \} I( | Y_{n, p} - c_p | \le \delta ) ] \right| \\
& \quad + \left| \E[ \{ f( X_{n, p}, Y_{n, p} ) - f( X_{n, p}, c_p ) \} I( | Y_{n, p} - c_p | > \delta ) ] \right| \\
& \le \E\left[ | f( X_{n, p}, Y_{n, p} ) - f( X_{n, p}, c_p ) | I( | Y_{n, p} - c_p | \le \delta ) \right] \\
& \quad + 2 \sup\{ | f( x, y ) | : ( x, y ) \in \mathbb{R}^2 \} \P[ | Y_{n, p} - c_p | > \delta ] \\
& \le \epsilon \P\left[ | Y_{n, p} - c_p | \le \delta \right]
+ 2 l \P[ | Y_{n, p} - c_p | > \delta ] ,
\end{align*}
which implies
\begin{align}
& \sup_p \left| \E[ f( X_{n, p}, Y_{n, p} ) ] - \E[ f( X_{n, p}, c_p ) ] \right| \nonumber\\
& \le \epsilon \sup_p \P\left[ | Y_{n, p} - c_p | \le \delta \right]
+ 2 l \sup_p \P[ | Y_{n, p} - c_p | > \delta ] .
\label{lemmaSlutsky_eq5}
\end{align}
Since $Y_{n, p} \stackrel{\P}{\longrightarrow} c_p$ uniformly-over-$p$, from \autoref{definition2} we get that there is $n_2$ such that for all $n \ge n_2$, $\sup_p \P[ | Y_{n, p} - c_p | > \delta ] < \epsilon$. Hence, from \eqref{lemmaSlutsky_eq5} we have
\begin{align}
\sup_p \left| \E[ f( X_{n, p}, Y_{n, p} ) ] - \E[ f( X_{n, p}, c_p ) ] \right|
\le \epsilon ( 1 + 2 l )
\label{lemmaSlutsky_eq6}
\end{align}
for all $n \ge n_2$. Using \eqref{lemmaSlutsky_eq4} and \eqref{lemmaSlutsky_eq6}, we get that for all $n \ge \max\{ n_1, n_2 \}$,
\begin{align}
& \sup_p \left| \E[ f( X_{n, p}, Y_{n, p} ) ] - \E[ f( X_p, c_p ) ] \right| \nonumber\\
& \le \sup_p \left| \E[ f( X_{n, p}, Y_{n, p} ) ] - \E[ f( X_{n, p}, c_p ) ] \right| 
+ \sup_p \left| \E[ f( X_{n, p}, c_p ) ] - \E[ f( X_p, c_p ) ] \right| \nonumber\\
& < \epsilon ( 1 + 2 l ) + 3 \epsilon = 2 \epsilon ( 2 + l ) .
\label{lemmaSlutsky_eq7}
\end{align}
Since $\epsilon > 0$ is arbitrary, \eqref{lemmaSlutsky_eq7} along with \ref{p2} in \autoref{portmanteau} imply that $\left( X_{n, p}, Y_{n, p} \right) \Longrightarrow \left( X_p, c_p \right)$ uniformly-over-$p$.
\end{proof}

\begin{proof}[Proof of \autoref{Slutsky}]
From \autoref{lemmaSlutsky}, we get that $\left( X_{n, p}, Y_{n, p} \right) \Longrightarrow \left( X_p, c_p \right)$ uniformly-over-$p$.
Now, to prove \ref{s1}, define $g : \mathbb{R}^2 \to \mathbb{R}$ by $g( x, y ) = x + y$. Then, from the continuity of $g$ and \autoref{mappingthm}, we get $X_{n, p} + Y_{n, p} \Longrightarrow X_p + c_p$ uniformly-over-$p$.
To prove \ref{s2}, define $g : \mathbb{R}^2 \to \mathbb{R}$ by $g( x, y ) = x y$, and we similarly get $ X_{n, p} Y_{n, p} \Longrightarrow c_p X_p$ uniformly-over-$p$.

Next, let $l = \inf_p | c_p |$ and define $g : \mathbb{R}^2 \to \mathbb{R}$ by $g( x, y ) = x / y$ if $| y | \ge l / 2$ and $g( x, y ) = 2 x / l$ if $| y | < l / 2$. Clearly, $g$ is a continuous function on $\mathbb{R}^2$. Also, $g( x, y ) \neq x / y$ if and only if $| y | < l / 2$, and in particular $g( x, c_p ) = x / c_p$ for any $x$ and every $p$, since $| c_p | \ge l$ for every $p$. Hence, for any bounded continuous function $f : \mathbb{R} \to \mathbb{R}$,
\begin{align*}
& | \E[ f( X_{n, p} / Y_{n, p} ) ] - \E[ f( X_p / c_p ) ] | \\
& \le \E[ | f( X_{n, p} / Y_{n, p} ) - f( g( X_{n, p}, Y_{n, p} ) ) | ] \\
& \quad + \E[ | f( g( X_p, c_p ) ) - f( X_p / c_p ) | ] \\
& \quad + | \E[ f( g( X_{n, p}, Y_{n, p} ) ) ] - \E[ f( g( X_p, c_p ) ) ] | \\
& = \E[ | f( X_{n, p} / Y_{n, p} ) - f( g( X_{n, p}, Y_{n, p} ) ) | I( | Y_{n, p} | < l / 2 ) ] \\
& \quad + | \E[ f( g( X_{n, p}, Y_{n, p} ) ) ] - \E[ f( g( X_p, c_p ) ) ] | \\
& \le 2 \sup\{ | f( x, y ) | : ( x, y ) \in \mathbb{R}^2 \} \P[ | Y_{n, p} | < l / 2 ] \\
& \quad + | \E[ f( g( X_{n, p}, Y_{n, p} ) ) ] - \E[ f( g( X_p, c_p ) ) ] | ,
\end{align*}
which implies
\begin{align}
& \sup_p | \E[ f( X_{n, p} / Y_{n, p} ) ] - \E[ f( X_p / c_p ) ] | \nonumber\\
& \le 2 \sup\{ | f( x, y ) | : ( x, y ) \in \mathbb{R}^2 \} \sup_p \P[ | Y_{n, p} | < l / 2 ] \nonumber\\
& \quad + \sup_p | \E[ f( g( X_{n, p}, Y_{n, p} ) ) ] - \E[ f( g( X_p, c_p ) ) ] | .
\label{Slutsky_eq1}
\end{align}
Now, $| Y_{n, p} | < l / 2$ implies that $| Y_{n, p} - c_p | \ge | c_p | - | Y_{n, p} | > l - ( l / 2 ) = l / 2$. Hence, $\P[ | Y_{n, p} | < l / 2 ] \le \P[ | Y_{n, p} - c_p | > l / 2 ]$ for every $n$ and $p$. So, from \eqref{Slutsky_eq1} we have
\begin{align}
& \sup_p | \E[ f( X_{n, p} / Y_{n, p} ) ] - \E[ f( X_p / c_p ) ] | \nonumber\\
& \le 2 \sup\{ | f( x, y ) | : ( x, y ) \in \mathbb{R}^2 \} \sup_p \P[ | Y_{n, p} - c_p | > l / 2 ] \nonumber\\
& \quad + \sup_p | \E[ f( g( X_{n, p}, Y_{n, p} ) ) ] - \E[ f( g( X_p, c_p ) ) ] | .
\label{Slutsky_eq2}
\end{align}
Consider any $\epsilon > 0$.
Since from \autoref{lemmaSlutsky} we have $( X_{n, p}, Y_{n, p} ) \Longrightarrow ( X_p, c_p )$ uniformly-over-$p$ and the composite function $h : \mathbb{R}^2 \to \mathbb{R}$ defined by $h( x, y ) = f( g( x, y ) )$ is bounded and continuous, from \autoref{definition1} we get that there is an integer $n_1$ such that such for all $n \ge n_1$, we have $\sup_p | \E[ f( g( X_{n, p}, Y_{n, p} ) ) ] - \E[ f( g( X_p, c_p ) ) ] | < \epsilon$. Also, from \autoref{definition2} we get that there is another integer $n_2$ such that for all $n \ge n_2$, we have $\sup_p \P[ | Y_{n, p} - c_p | > l / 2 ] < \epsilon$. Therefore, from \eqref{Slutsky_eq2} we get that for all $n \ge \max\{ n_1, n_2 \}$,
\begin{align*}
& \sup_p | \E[ f( X_{n, p} / Y_{n, p} ) ] - \E[ f( X_p / c_p ) ] | < \epsilon ( 1 + 2 \sup\{ | f( x, y ) | : ( x, y ) \in \mathbb{R}^2 \} ) .
\end{align*}
Since $\epsilon > 0$ is arbitrary and $f$ is any bounded continuous function, the above inequality establishes \ref{s3}.
\end{proof}

\begin{proof}[Proof of \autoref{lemma_gaussian}]
Let $\{ \be_{p, i} \}$ be the eigenvectors of $\bSigma_p$ corresponding to the eigenvalues $\{ \delta_{p, i} \}$.
By the Karhunen-Loeve expansion, we have
\begin{align*}
\bZ_{p, i} = \sum_{k = 1}^p \sqrt{\delta_{p, k}} V_{p, k, i} \be_{p, k}
\end{align*}
for $i = 1, \ldots, n$, where $\{ V_{p, k, i} \}$ are random variables uncorrelated over $k = 1, \ldots, p$ and independent over $i = 1, \ldots, n$, with $\E[ V_{p, k, i} ] = 0$ and $\Var( V_{p, k, i} ) = 1$ for all $k$ and $i$. Denote
$\bar{V}_{p, k} = n^{-1} \sum_{i=1}^n V_{p, k, i}$. Then, $\bar{\bZ}_{p} = \sum_{k = 1}^p \sqrt{\delta_{p, k}} \bar{V}_{p, k} \be_{p, k}$, and
\begin{align*}
n \| \bar{\bZ}_{p} \|^2 = \sum_{k = 1}^p \delta_{p, k} n ( \bar{V}_{p, k} )^2 .
\end{align*}
Define
\begin{align*}
U_{n, p} = \frac{n \| \bar{\bZ}_{p} \|^2 - \sum_{k = 1}^p \delta_{p, k}}{\left( 2 \sum_{k = 1}^p \delta_{p, k}^2 \right)^{1/2}}
\text{ and }
\tilde{U}_{n, q} = \frac{\sum_{k = 1}^q \delta_{p, k} \left\{ ( \bar{V}_{p, k} )^2 - 1 \right\}}{\left( 2 \sum_{k = 1}^p \delta_{p, k}^2 \right)^{1/2}} ,
\end{align*}
where $q < p$. Let $\varphi_{n, p}( \cdot )$ and $\tilde{\varphi}_{n, q}( \cdot )$ denote the characteristic functions of $U_{n, p}$ and $\tilde{U}_{n, q}$, respectively. Then, using inequality ($26.4_0$) in \cite[p.~343]{billingsley2008probability}, we have
\begin{align}
\left| \varphi_{n, p}( t ) - \tilde{\varphi}_{n, q}( t ) \right|
& = \left| \E\left[ \exp\left( \mathrm{i} t U_{n, p} \right) \right] - \E\left[ \exp\left( \mathrm{i} t \tilde{U}_{n, q} \right) \right] \right| \nonumber\\
& = \left| \E\left[ \exp\left( \mathrm{i} t \tilde{U}_{n, q} \right) \exp\left( \mathrm{i} t \left( U_{n, p} - \tilde{U}_{n, q} \right) \right) \right] - \E\left[ \exp\left( \mathrm{i} t \tilde{U}_{n, q} \right) \right] \right| \nonumber\\
& \le \E\left[ \left| \exp\left( \mathrm{i} t \tilde{U}_{n, q} \right) \right| \left| \exp\left( \mathrm{i} t \left( U_{n, p} - \tilde{U}_{n, q} \right) \right) - 1 \right| \right] \nonumber\\
& = \E\left[ \left| \exp\left( \mathrm{i} t \left( U_{n, p} - \tilde{U}_{n, q} \right) \right) - 1 \right| \right] \nonumber\\
& \le | t | \E\left[ \left| U_{n, p} - \tilde{U}_{n, q} \right| \right] ,
\label{lemma_gaussian_eq1}
\end{align}
and \eqref{lemma_gaussian_eq1} yields
\begin{align}
\left| \varphi_{n, p}( t ) - \tilde{\varphi}_{n, q}( t ) \right| 
& \le | t | \left[ \E\left\{ \left( U_{n, p} - \tilde{U}_{n, q} \right)^2 \right\} \right]^{1/2} \nonumber\\
& = | t | \left[ \E\left\{ \left( \frac{\sum_{k = q + 1}^p \delta_{p, k} \left\{ ( \bar{V}_{p, k} )^2 - 1 \right\}}{\left( 2 \sum_{k = 1}^p \delta_{p, k}^2 \right)^{1/2}} \right)^2 \right\} \right]^{1/2} \nonumber\\
& = | t | \left[ \frac{\sum_{k_1 = q + 1}^p \sum_{k_2 = q + 1}^p \delta_{p, k_1} \delta_{p, k_2} \E\left[ \left\{ ( \bar{V}_{p, k_1} )^2 - 1 \right\} \left\{ ( \bar{V}_{p, k_2} )^2 - 1 \right\} \right]}{2 \sum_{k = 1}^p \delta_{p, k}^2} \right]^{1/2} \nonumber\\
& \le | t | \left[ \frac{\sum_{k_1 = q + 1}^p \sum_{k_2 = q + 1}^p \delta_{p, k_1} \delta_{p, k_2} \sqrt{\Var\left\{ ( \bar{V}_{p, k_1} )^2 \right\}} \sqrt{\Var\left\{ ( \bar{V}_{p, k_2} )^2 \right\}}}{2 \sum_{k = 1}^p \delta_{p, k}^2} \right]^{1/2} \nonumber\\
& = | t | \left[ \frac{\left[ \sum_{k = q + 1}^p \delta_{p, k} \sqrt{\Var\left\{ ( \bar{V}_{p, k} )^2 \right\}} \right]^2}{2 \sum_{k = 1}^p \delta_{p, k}^2} \right]^{1/2} .
\label{lemma_gaussian_eq2}
\end{align}
From (A.1) in \cite{zhang2020simple}, we get
\begin{align}
\Var\left\{ ( \bar{V}_{p, k} )^2 \right\}
\le M / n
\label{lemma_gaussian_eq3}
\end{align}
for some positive constant $M$ and for every $p$ and $k$. Then, from \eqref{lemma_gaussian_eq2} and \eqref{lemma_gaussian_eq3}, we get
\begin{align}
\left| \varphi_{n, p}( t ) - \tilde{\varphi}_{n, q}( t ) \right|
\le | t | \frac{\sqrt{M}}{\sqrt{n}} \frac{\sum_{k = q + 1}^p \delta_{p, k} }{\left( 2 \sum_{k = 1}^p \delta_{p, k}^2 \right)^{1/2}} .
\label{lemma_gaussian_eq4}
\end{align}
Since by assumption, $\left( \sum_{k = 1}^p \delta_{p, k} \right) / \left( 2 \sum_{k = 1}^p \delta_{p, k}^2 \right)^{1/2}$ is bounded as $p \to \infty$, we have for $q \ge Q_1$ and $p > q$,
\begin{align}
\left| \varphi_{n, p}( t ) - \tilde{\varphi}_{n, q}( t ) \right|
\le \epsilon
\label{lemma_gaussian_eq5}
\end{align}
for all $n \ge N_1$. Now, for any fixed $q$ and $p$, we get from the central limit theorem and the continuous mapping theorem that
\begin{align*}
\tilde{U}_{n, q} \Longrightarrow \frac{\sum_{k = 1}^q \delta_{p, k} \left\{ W_k - 1 \right\}}{\left( 2 \sum_{k = 1}^p \delta_{p, k}^2 \right)^{1/2}}
\end{align*}
as $n \to \infty$, where $W_k$ are independent chi square random variables with degree of freedom 1. Let $\hat{\varphi}_{p, q}( \cdot )$ denote the characteristic function of $\sum_{k = 1}^q \delta_{p, k} \left\{ W_k - 1 \right\} / \left( 2 \sum_{k = 1}^p \delta_{p, k}^2 \right)^{1/2}$. Then, for all $n \ge N_2$ which depends on $Q_1$, we have
\begin{align}
\left| \tilde{\varphi}_{n, q}( t ) - \hat{\varphi}_{p, q}( t ) \right|
\le \epsilon .
\label{lemma_gaussian_eq6}
\end{align}
Next, let $\hat{\varphi}_{p}( \cdot )$ denote the characteristic function of $\sum_{k = 1}^p \delta_{p, k} \left\{ W_k - 1 \right\} / \left( 2 \sum_{k = 1}^p \delta_{p, k}^2 \right)^{1/2}$. Then, using arguments similar to those used to establish \eqref{lemma_gaussian_eq5}, we get that for all $n \ge N_1$,
\begin{align}
\left| \hat{\varphi}_{p, q}( t ) - \hat{\varphi}_{p}( t ) \right|
\le \epsilon .
\label{lemma_gaussian_eq7}
\end{align}
Combining \eqref{lemma_gaussian_eq5}, \eqref{lemma_gaussian_eq6} and \eqref{lemma_gaussian_eq7}, we get
\begin{align}
\left| \varphi_{n, p}( t ) - \hat{\varphi}_{p}( t ) \right|
\le 3 \epsilon
\end{align}
for all $p > Q_1$ and all $n \ge \max\{ N_1, N_2 \}$. Now, using the central limit theorem and the continuous mapping theorem, we get that for every fixed $q$, there is $N_q$ such that
\begin{align}
\left| \varphi_{n, q}( t ) - \hat{\varphi}_{q}( t ) \right|
\le \epsilon
\end{align}
for every $n \ge N_q$. Taking $n \ge \max\{\max\{ N_q : q = 1, \ldots, Q_1\}, N_1, N_2 \}$, we get that
\begin{align*}
\sup_p\left| \varphi_{n, p}( t ) - \hat{\varphi}_{p}( t ) \right|
\le \epsilon ,
\end{align*}
which completes the proof.
\end{proof}

\begin{proof}[Proof of \autoref{thm:1}]
Define $ \tilde{\bh}_p\left( \bX_p, \bY_p \right)
= \bh_p\left( \bX_p, \bY_p \right) - \E\left[ \bh_p\left( \bX_p, \bY_p \right) \right] $, where $ \bX_p $ and $ \bY_p $ are independent $p$-dimensional random vectors.
Define
\begin{align*}
\bH_{p, k}
& = \frac{1}{n} \sum_{l=1}^{K} \frac{n_l}{n_k} \sum_{i_k = 1}^{n_k} \E\left[ \tilde{\bh}_p\left( \bY_{p, k, i_k} , \bY_{p, l, 1} \right) \mid \bY_{p, k, i_k} \right]
+ \frac{1}{n} \sum_{l=1}^{K} \sum_{i_l = 1}^{n_l} \E\left[ \tilde{\bh}_p\left( \bY_{p, k, 1} , \bY_{p, l, i_l} \right) \mid \bY_{p, l, i_l} \right] \\
\intertext{and}
\bG_{p, k, l}
& = \frac{1}{n_k n_l} \sum_{i_k = 1}^{n_k} \sum_{i_l = 1}^{n_l} \tilde{\bh}_p\left( \bY_{p, k, i_k} , \bY_{p, l, i_l} \right) 
- \frac{1}{n_k} \sum_{i_k = 1}^{n_k} \E\left[ \tilde{\bh}_p\left( \bY_{p, k, i_k} , \bY_{p, l, 1} \right) \mid \bY_{p, k, i_k} \right] \\
& \quad - \frac{1}{n_l} \sum_{i_l = 1}^{n_l} \E\left[ \tilde{\bh}_p\left( \bY_{p, k, 1} , \bY_{p, l, i_l} \right) \mid \bY_{p, l, i_l} \right] ,
\end{align*}
where $ k, l = 1, \ldots, K $.
Then,
\begin{align*}
\bar{\bR}_{p, k} - \E\left[ \bar{\bR}_{p, k} \right]
= \bH_{p, k} + n^{-1} \sum_{l=1}^{K} n_l \bG_{p, k, l} .
\end{align*}
Next, define
\begin{align*}
& \bW_{n, p} = \sqrt{n} \left( \sqrt{\lambda_{p, 1}} \bH_{p, 1}, \ldots, \sqrt{\lambda_{p, K}} \bH_{p, K} \right) , \text{ and}\\
& \bar{\bh}_p\left( \bX_p, \bY_p \right)
= \tilde{\bh}_p\left( \bX_p, \bY_p \right) 
- \E\left[ \tilde{\bh}_p\left( \bX_p, \bY_p \right) \mid \bX_p \right]
- \E\left[ \tilde{\bh}_p\left( \bX_p, \bY_p \right) \mid \bY_p \right] .
\end{align*}
We have
\begin{align}
& \E\left[ \bar{\bh}_p\left( \bX_p, \bY_p \right) \mid \bX_p \right]
= \E\left[ \bar{\bh}_p\left( \bX_p, \bY_p \right) \mid \bY_p \right]
= \E\left[ \bar{\bh}_p\left( \bX_p, \bY_p \right) \right]
= \mathbf{0}_p .
\label{thm1:eqcond}
\end{align}
It follows from \eqref{thm1:eqcond} that for every case other than $ ( k, i_k, l_1, i_{l_1} ) = ( k, j_k, l_2, j_{l_2} ) $ and $ ( k, i_k, l_1, i_{l_1} ) = ( l_2, j_{l_2}, k, j_k ) $, we have
\begin{align}
& \E\left[ \left\langle \bar{\bh}_p\left( \bY_{p, k, i_k}, \bY_{p, l_1, i_{l_1}} \right), \; \bar{\bh}_p\left( \bY_{p, k, j_k}, \bY_{p, l_2, j_{l_2}} \right) \right\rangle \right] = 0 .
\label{thm1:eqinner}
\end{align}
Note that $ \bG_{p, k, l} = 
( n_k n_l )^{-1} \sum_{i_k = 1}^{n_k} \sum_{i_l = 1}^{n_l} \bar{\bh}_p\left( \bY_{p, k, i_k} , \bY_{p, l, i_l} \right) $.
Using \eqref{thm1:eqinner}, we have
\begin{align}
& \E\left[ \left\| \sqrt{n_k} \frac{1}{n} \sum_{l=1}^{K} n_l \bG_{p, k, l} \right\|^2 \right] \nonumber\\
& = n_k \sum_{l_1 = 1}^{K} \sum_{l_2 = 1}^{K} \frac{n_{l_1}}{n} \frac{n_{l_2}}{n} \E\left[ \left\langle \bG_{p, k, l_1}, \bG_{p, k, l_2} \right\rangle \right] \nonumber\\
& = \frac{1}{n^2} \frac{1}{n_k} \sum_{i_k = 1}^{n_k} \sum_{j_k = 1}^{n_k} \sum_{l_1 = 1}^{K} \sum_{l_2 = 1}^{K} \sum_{i_{l_1} = 1}^{n_{l_1}} \sum_{j_{l_2} = 1}^{n_{l_2}} \E\left[ \left\langle \bar{\bh}_p\left( \bY_{p, k, i_k} , \bY_{p, l_1, i_{l_1}} \right),
\bar{\bh}_p\left( \bY_{p, k, j_k} , \bY_{p, l_2, j_{l_2}} \right) \right\rangle \right] \nonumber\\
& = \frac{1}{n^2} \frac{1}{n_k} \sum_{i_k = 1}^{n_k} \sum_{l = 1}^{K} \sum_{i_{l} = 1}^{n_{l}} \E\left[ \left\| \bar{\bh}_p\left( \bY_{p, k, i_k} , \bY_{p, l, i_{l}} \right) \right\|^2 \right] \nonumber\\
& \quad + \frac{1}{n^2} \frac{1}{n_k} \sum_{i_k = 1}^{n_k} \sum_{j_k = 1}^{n_k} \E\left[ \left\langle \bar{\bh}_p\left( \bY_{p, k, i_k} , \bY_{p, k, j_k} \right),
\bar{\bh}_p\left( \bY_{p, k, j_k} , \bY_{p, k, i_k} \right) \right\rangle \right] .
\label{thm1:negliexpe}
\end{align}
From \eqref{thm1:eqinner}, we get
\begin{align}
& \sup_p\text{E}\left[ \left\| \sqrt{n_k} \frac{1}{n} \sum_{l=1}^{K} n_l \bG_{p, k, l} \right\|^2 \right]
\longrightarrow 0
\, \text{ as } n \to \infty \text{  for } k = 1, \ldots, K,
\nonumber\\
& \text{which implies }
\sum_{k = 1}^{K} n_k \left\| \left\{ \bar{\bR}_{p, k} - \E\left[ \bar{\bR}_{p, k} \right] \right\} - \bH_{p, k} \right\|^2
= \sum_{k = 1}^{K} \left\| \sqrt{n_k} \frac{1}{n} \sum_{l=1}^{K} n_l \bG_{p, k, l} \right\|^2
\stackrel{P}{\longrightarrow} 0
\label{thm1:eq4}
\end{align}
uniformly-over-$p$ as $ n \to \infty $.
Next, it can be verified that
\begin{align}
\text{Cov}\left( \sqrt{n_{k_1}} \bH_{p, k_1},\, \sqrt{n_{k_2}} \bH_{p, k_2} \right) 
& = \frac{\sqrt{n_{k_1} n_{k_2}}}{n} \sum_{l=1}^{K} \frac{n_l}{n} \left[ \mathbf{C}_p( k_1, k_2, l ) - \mathbf{C}_p( k_1, l, k_2 ) - \mathbf{C}_p( l, k_2, k_1 ) \right] \nonumber\\
& \quad
+ \sum_{l_1 = 1}^{K} \sum_{l_2 = 1}^{K} \frac{n_{l_1} n_{l_2}}{n^2}
\mathbf{C}_p( l_1, l_2, k_1 ) \mathbb{I}( k_1 = k_2 ) .
\label{thm1:eq7}
\end{align}
Since $ n^{-1} n_k \to \lambda_k \in ( 0, 1 ) $ for all $ k $ as $ n \to \infty $, from \eqref{thm1:eq7}, we have for all $ k_1 $ and $ k_2 $,
\begin{align}
& \text{Cov}\left( \sqrt{n_{k_1}} \bH_{p, k_1},\, \sqrt{n_{k_2}} \bH_{p, k_2} \right)
\longrightarrow \boldsymbol{\sigma}_{p, k_1, k_2} \quad \text{as } n \to \infty .
\label{thm1:eq8}
\end{align}
The proof follows from \autoref{lemma_gaussian} using \eqref{thm1:eq8}.

\end{proof}

\begin{proof}[Proof of \autoref{thm:2}]
Under $ \text{H}_0 $ in \eqref{h_0}, it follows from \autoref{thm:1} that for every $ 0 < \alpha < 1 $, the size of the level $ \alpha $ kernel-based test converges to $ \alpha $ as $ n \to \infty $ uniformly-over-$p$.

Since for all $ k $, $ E[ \bar{\mathbf{R}}_{p,k} ] \to \sum_{l=1}^{K} \lambda_l E[ \mathbf{h}_p( \bY_{p, k, 1} , \bY_{p, l, 1} ) ] $ as $ n \to \infty $ uniformly-over-$p$, if there is $k$ such that $ \sum_{l=1}^{K} \lambda_l E[ \mathbf{h}_p( \bY_{p, k, 1} , \bY_{p, l, 1} ) ] \neq \mathbf{0}_p $ for every $p$, we have $ \left\| E\left[ \sqrt{n_k} \bar{\mathbf{R}}_{p, k} \right] \right\| \to \infty $ as $ n \to \infty $ uniformly-over-$p$ for that $ k $, and this implies $ S_{n,p} \stackrel{P}{\longrightarrow} \infty $ as $ n \to \infty $ uniformly-over-$p$. So, the power of the test converges to 1 as $n \to \infty$ uniformly-over-$p$.
\end{proof}

\end{appendix}
\bibliographystyle{imsart-number} 
\bibliography{bibliography}       


\end{document}